\theoremstyle{definition}
\newtheorem{thm}{Theorem}[section]
\newtheorem{Def}[thm]{Definition}
\newtheorem{pro}[thm]{Proposition}
\newtheorem{cor}[thm]{Corollary}
\newtheorem{lem}[thm]{Lemma}
\newtheorem{rem}[thm]{Remark}
\newtheorem{que}[thm]{Question}
\newtheorem*{mainthm}{Theorem A}
\newtheorem*{mainthm2}{Theorem B}
\theoremstyle{definition}
\begin{document}
\title{A characterization of the Razak-Jacelon algebra}
\author{Norio Nawata}
\address{Department of Pure and Applied Mathematics, Graduate School of Information Science and Technology, Osaka University, Yamadaoka 1-5, Suita, Osaka 565-0871, Japan}
\email{nawata@ist.osaka-u.ac.jp}
\keywords{Stably projectionless C$^*$-algebra; Kirchberg's central sequence C$^*$-algebra; 
$KK$-contractible C$^*$-algebra}
\subjclass[2020]{Primary 46L35, Secondary 46L40; 46L80}
\thanks{This work was supported by JSPS KAKENHI Grant Number 20K03630}

\begin{abstract}
Combining Elliott, Gong, Lin and Niu's result and  Castillejos and Evington's result, we see that 
if $A$ is a simple separable nuclear monotracial C$^*$-algebra, then  $A\otimes\mathcal{W}$ 
is isomorphic to $\mathcal{W}$ where $\mathcal{W}$ is the Razak-Jacelon algebra. 
In this paper, we give another proof of this. In particular, we show that 
if $\mathcal{D}$ is a simple separable nuclear monotracial $M_{2^{\infty}}$-stable C$^*$-algebra 
which is $KK$-equivalent to $\{0\}$, then $\mathcal{D}$ is isomorphic to $\mathcal{W}$ without 
considering tracial approximations of C$^*$-algebras with finite nuclear dimension. 
Our proof is based on Matui and Sato's technique, Schafhauser's idea in his proof of the 
Tikuisis-White-Winter theorem and properties of Kirchberg's central sequence C$^*$-algebra 
$F(\mathcal{D})$ of $\mathcal{D}$. 
Note that some results for $F(\mathcal{D})$ are based on Elliott-Gong-Lin-Niu's stable 
uniqueness theorem. 
Also, we characterize $\mathcal{W}$ by using properties of $F(\mathcal{W})$. 
Indeed, we show that a simple separable nuclear monotracial 
C$^*$-algebra $D$ is isomorphic to $\mathcal{W}$ if and only if 
$D$ satisfies the following properties: \ \\
(i) for any $\theta\in [0,1]$, there exists a projection $p$ in $F(D)$ such that 
$\tau_{D, \omega}(p)=\theta$, \ \\
(ii) if $p$ and $q$ are projections in $F(D)$ such that $0<\tau_{D, \omega}(p)=\tau_{D, \omega}(q)$, 
then  $p$ is Murray-von Neumann equivalent to $q$,  \ \\
(iii) there exists an injective homomorphism from $D$ to $\mathcal{W}$. 
\end{abstract}
\maketitle

\section{Introduction} 

The Razak-Jacelon algebra $\mathcal{W}$ is a certain simple separable nuclear 
monotracial C$^*$-algebra which is $KK$-equivalent to $\{0\}$. Note that 
such a C$^*$-algebra must be stably projectionless, that is, 
$\mathcal{W}\otimes M_{n}(\mathbb{C})$ has no non-zero projections for any $n\in\mathbb{N}$. 
In particular, every stably projectionless C$^*$-algebra is non-unital. 
In \cite{J}, Jacelon constructed $\mathcal{W}$ as an inductive limit C$^*$-algebra 
of Razak's building blocks \cite{Raz}. 
We can regard $\mathcal{W}$ as a stably finite analogue of the Cuntz algebra $\mathcal{O}_2$. 
In particular, $\mathcal{W}$ is expected to play a central role in the classification theory of 
simple separable nuclear stably projectionless C$^*$-algebras as $\mathcal{O}_2$ 
played in the classification theory of Kirchberg algebras (see, for example, \cite{Ror1} and \cite{G2}). 
We refer the reader to \cite{EGLN0}, \cite{EGLN} and \cite{GL2} for recent progress in 
the classification of simple separable nuclear stably projectionless C$^*$-algebras. 
Note that there exist many interesting examples of simple stably projectionless C$^*$-algebras. 
See, for example, \cite{Connes2}, \cite{Ell}, \cite{K2}, \cite{KK1}, \cite{KK2} and \cite{Rob}. 

Combining Elliott, Gong, Lin and Niu's result \cite{EGLN} and  Castillejos and Evington's result 
\cite{CE} (see also \cite{CETWW}), we see that if $A$ is a simple separable nuclear 
monotracial C$^*$-algebra, then  $A\otimes\mathcal{W}$ is isomorphic to $\mathcal{W}$. 
This can be considered as a Kirchberg-Phillips type absorption theorem \cite{KP} for $\mathcal{W}$. 
In this paper, we give another proof of this. 
In our proof, we do not consider tracial approximations of C$^*$-algebras with 
finite nuclear dimension. Also, we mainly consider abstract settings and do not use 
any classification theorem based on inductive limit structures of $\mathcal{W}$ 
other than Razak's classification theorem \cite{Raz}. 
(Actually, we need Razak's classification theorem only for 
$\mathcal{W}\otimes M_{2^{\infty}}\cong \mathcal{W}$.) 
We obtain a Kirchberg-Phillips type absorption theorem for $\mathcal{W}$ as a corollary of 
the following theorem. 

\begin{mainthm} (Theorem \ref{thm:main}) \ \\
Let $\mathcal{D}$ be a simple separable nuclear monotracial $M_{2^{\infty}}$-stable C$^*$-algebra 
which is $KK$-equivalent to $\{0\}$. Then $\mathcal{D}$ is isomorphic to $\mathcal{W}$. 
\end{mainthm}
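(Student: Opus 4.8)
The plan is to deduce the isomorphism from an \emph{existence} statement and a \emph{uniqueness} statement, glued together by an Elliott-type approximate intertwining. By Razak's classification theorem \cite{Raz} one has $\mathcal{W}\otimes M_{2^{\infty}}\cong\mathcal{W}$, so $\mathcal{W}$ itself is a simple separable nuclear monotracial $M_{2^{\infty}}$-stable C$^*$-algebra which is $KK$-equivalent to $\{0\}$; thus $\mathcal{D}$ and $\mathcal{W}$ lie in one common class $\mathcal{C}$, and it suffices to prove: (a) for any $D_1,D_2\in\mathcal{C}$ there is a trace-preserving embedding $D_1\hookrightarrow D_2$; and (b) any two trace-preserving $*$-homomorphisms between members of $\mathcal{C}$ are approximately unitarily equivalent. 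Granting (a) and (b), pick embeddings $\mathcal{D}\xrightarrow{\varphi}\mathcal{W}\xrightarrow{\psi}\mathcal{D}$; then $\psi\circ\varphi$ and $\mathrm{id}_{\mathcal{D}}$, respectively $\varphi\circ\psi$ and $\mathrm{id}_{\mathcal{W}}$, are trace-preserving, hence approximately unitarily equivalent, and Elliott's approximate intertwining then upgrades this to an isomorphism $\mathcal{D}\cong\mathcal{W}$.

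The technical core is (b), and the engine is Kirchberg's central sequence algebra $F(D)$, analysed via Matui and Sato's method. First I would establish, for every $D\in\mathcal{C}$, the two properties of $F(D)$ appearing in Theorem B: that $F(D)$ contains a projection of trace $\theta$ for every $\theta\in[0,1]$ (using $M_{2^{\infty}}$-stability to produce a dense set of trace values, then a comparison/filling-in argument inside $F(D)$ for the rest), and that projections in $F(D)$ of equal positive trace are Murray--von Neumann equivalent. Here the hypothesis that $D$ is $KK$-equivalent to $\{0\}$ is essential: it removes the $K$-theoretic obstruction in the Elliott--Gong--Lin--Niu stable uniqueness theorem \cite{EGLN}, leaving only the trace to be matched. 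Together with property (SI) this gives $F(D)$ ``enough'' projections with good comparison. Now let $\varphi,\psi\colon D_1\to D_2$ be trace-preserving. Passing to the tracial von Neumann completions, both induce the essentially unique embedding of the hyperfinite $\mathrm{II}_1$ factor into its ultrapower, so they become unitarily equivalent there; the projection structure of $F(D_2)$ (equivalently, of the relative commutant of $\varphi(D_1)$) lets one approximate the von Neumann unitary by unitaries in $D_2$ that are approximately central for $\varphi(D_1)$, and property (SI) bridges the gap between $\|\cdot\|_2$-closeness and norm-closeness, yielding honest approximate unitary equivalence of $\varphi$ and $\psi$.

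For the existence statement (a) I would follow Schafhauser's argument for the Tikuisis--White--Winter theorem. Since every $D\in\mathcal{C}$ is $KK$-equivalent to $\{0\}$ it lies in the UCT class, so its unique trace is quasidiagonal; and for $D_1,D_2\in\mathcal{C}$ the obstruction group for extensions of $D_1$ by $D_2\otimes\mathcal{K}$, namely $KK^1(D_1,D_2)$, vanishes because $D_2$ is $KK$-trivial. Hence such an extension is weakly nuclear and trivial, which produces an approximately multiplicative, trace-compatible map $D_1\to D_2\otimes\mathcal{K}$; using $M_{2^{\infty}}$-stability of $D_2$ --- hence $\mathcal{Z}$-stability and strict comparison --- one absorbs and corrects this into a genuine trace-preserving embedding $D_1\hookrightarrow D_2$. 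Applied with $D_1=\mathcal{D}$, $D_2=\mathcal{W}$, this also verifies condition (iii) of Theorem B, so an alternative route is to check (i), (ii), (iii) and quote the characterization.

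The step I expect to be the main obstacle is the passage in (b) from unitary equivalence of the amplifications in the hyperfinite $\mathrm{II}_1$ factor to norm-approximate unitary equivalence of the $*$-homomorphisms themselves: this is exactly where the fine structure of $F(D)$ is needed, and where property (SI) together with the Elliott--Gong--Lin--Niu stable uniqueness theorem must be invoked to control the ``small'' part of the difference, and where $M_{2^{\infty}}$-stability enters in an essential way. A secondary delicate point, in (a), is arranging that the splitting of the trivial extension lands inside $D_2\otimes\mathcal{K}$ rather than merely its multiplier algebra, while keeping the trace matched --- which again relies on $\mathcal{Z}$-stability and strict comparison of $D_2$.
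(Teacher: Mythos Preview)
Your high-level strategy---existence plus uniqueness glued by Elliott's approximate intertwining---is exactly the paper's, and you correctly identify the key ingredients (property W of $F(D)$, Schafhauser's extension-theoretic idea, the EGLN stable uniqueness theorem, property (SI)). The detailed mechanisms you sketch, however, differ from the paper's in both halves.

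For uniqueness, the paper does \emph{not} pass to tracial von Neumann completions and then lift a von Neumann unitary back to the C$^*$-level. Instead it runs Connes' $2\times 2$ matrix trick directly inside a relative central sequence algebra: given trace-preserving $\varphi,\psi:\mathcal{D}\to B^{\omega}$, form $\Phi=\varphi\oplus\psi:\mathcal{D}\to M_2(B)^{\omega}$ and show that the two corner projections in $F(\Phi(\mathcal{D}),M_2(B))$ are Murray--von Neumann equivalent (each has trace $1/2$, and a relative version of property W---Theorem~\ref{thm:relative-property-w}---applies). The witnessing partial isometry is then massaged into a unitary using $B\subseteq\overline{\mathrm{GL}(B^{\sim})}$. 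This sidesteps entirely the lifting step you flag as the main obstacle; your route may well work, but the paper's is cleaner and never leaves the C$^*$-setting. For existence, the paper does not work with $D_2\otimes\mathbb{K}$ or invoke quasidiagonality; it takes the trace-kernel extension $0\to J\to B^{\omega}\to M\to 0$ (with $M$ the von Neumann ultrapower), uses Blackadar's technique to extract a \emph{separable} purely large sub-extension $\eta_0$ with stable kernel (stability verified via the Hjelmborg--R\o rdam criterion), and splits $\Pi^*\eta_0$ using $\mathrm{Ext}(\mathcal{D},J_0)=0$. The resulting map into $B^{\omega}$ is then upgraded to a genuine map into $B$ by a one-sided intertwining that already \emph{uses} the uniqueness result (Corollary~\ref{cor:uniqueness-iii}). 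Your packaging via $KK^1(D_1,D_2)=0$ is morally equivalent, but note the paper's argument needs neither a quasidiagonality statement nor the stabilisation $D_2\otimes\mathbb{K}$ explicitly.
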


Our proof of the theorem above is based on Matui and Sato's technique \cite{MS}, 
\cite{MS2}, \cite{MS3}, Schafhauser's  idea \cite{Sc} (see also \cite{Sc2}) 
in his proof of the Tikuisis-White-Winter theorem \cite{TWW} and properties of 
Kirchberg's central sequence C$^*$-algebra $F(\mathcal{D})$ of $\mathcal{D}$. 

Matui-Sato's technique enables us to show that certain (relative) central sequence 
C$^*$-algebras have strict comparison. 
Note that a key concept in their technique is property (SI). This concept was 
introduced by Sato in \cite{Sa0} and \cite{Sa}. 

Borrowing Schafhauser's  idea, we show that if $\mathcal{D}$ is a simple separable 
nuclear monotracial ($M_{2^{\infty}}$-stable) C$^*$-algebra which is $KK$-equivalent to $\{0\}$, 
then there exist ``trace preserving'' homomorphisms from $\mathcal{D}$ to ultrapowers 
$B^{\omega}$ of certain C$^*$-algebras $B$. 
Combining this and a uniqueness result for approximate homomorphisms from $\mathcal{D}$, 
we obtain an existence result, that is, existence of homomorphisms from 
$\mathcal{D}$ to certain C$^*$-algebras. 
Schafhauser's arguments are based on the extension theory (or $KK$-theory) and 
Elliott and Kucerovsky's result \cite{EllK} with a correction by Gabe \cite{G}. 
Hence Schafhauser's  arguments are suitable for our purpose, that is, a study of  
C$^*$-algebras which are $KK$-equivalent to $\{0\}$. 

We studied properties of $F(\mathcal{W})$ in \cite{Na3} and \cite{Na} by using 
Elliott-Gong-Lin-Niu's stable uniqueness theorem in \cite{EGLN}. 
In particular, we showed that $F(\mathcal{W})$ has many projections and 
satisfies a certain comparison theory for projections. 
By these properties and Connes' $2\times 2$ matrix trick, we can show that 
every trace preserving endomorphism of $\mathcal{W}$ is approximately inner. 
(Note that Jacelon showed this result in \cite[Corollary 4.6]{J} as an application of Razak's results 
in \cite{Raz}.)
This argument is a traditional argument in the theory of operator algebras (see \cite{Connes}). 
In this paper, we remark that arguments in \cite{Na3} and \cite{Na} work for 
a simple separable nuclear monotracial $M_{2^{\infty}}$-stable C$^*$-algebra $\mathcal{D}$ 
which is $KK$-equivalent to $\{0\}$. Also, we characterize $\mathcal{W}$ by 
using these properties of $F(\mathcal{W})$. 
Indeed, we show the following theorem. 

\begin{mainthm2} (Theorem \ref{thm:main2}) \ \\
Let $D$ be a simple separable nuclear monotracial C$^*$-algebra. Then 
$D$ is isomorphic to $\mathcal{W}$ if and only if $D$ satisfies the following properties: 
\ \\
(i) for any $\theta\in [0,1]$, there exists a projection $p$ in $F(D)$ such that 
$\tau_{D, \omega}(p)=\theta$, \ \\
(ii) if $p$ and $q$ are projections in $F(D)$ such that $0<\tau_{D, \omega}(p)=\tau_{D, \omega}(q)$, 
then  $p$ is Murray-von Neumann equivalent to $q$,  \ \\
(iii) there exists an injective homomorphism from $D$ to $\mathcal{W}$. 
\end{mainthm2}

This paper is organized as follows. In Section \ref{sec:pre}, we collect notations, definitions and 
some results. In particular, we recall Matui-Sato's technique. 
In Section \ref{sec:property-w}, we introduce the property W, which is a key property for 
uniqueness results. Also, we remark that arguments in \cite{Na3} and \cite{Na} 
work for more general settings. 
In Section \ref{sec:uniqueness}, we show uniqueness results. First, 
we show that if $D$ has property W, then every trace preserving endomorphism of $D$ 
is approximately inner. Secondly, we consider 
a uniqueness theorem for approximate homomorphisms from 
a simple separable nuclear monotracial $M_{2^{\infty}}$-stable C$^*$-algebra $\mathcal{D}$ 
which is $KK$-equivalent to $\{0\}$ for an existence result in Section \ref{sec:existence}. 
In Section \ref{sec:existence}, we show an existence result by borrowing Schafhauser's  idea.
In Section \ref{sec:main}, we show the main results in this paper.

\section{Preliminaries}\label{sec:pre}

In this section we shall collect notations, definitions and some results. 
We refer the reader to \cite{Bla} and \cite{Ped2} for basics of operator algebras. 

For a C$^*$-algebra $A$, we denote by $A_{+}$ the sets of positive elements of $A$ and 
by $A^{\sim}$ the unitization algebra of $A$. Note that if $A$ is unital, then 
$A=A^{\sim}$. For $a,b\in A_{+}$, we say that $a$ is \textit{Murray-von Neumann equivalent 
to} $b$, written $a\sim b$, if there exists an element $z$ in $A$ such that 
$z^*z=a$ and $zz^*=b$. Note that $\sim$ is an equivalence relation by \cite[Theorem 3.5]{Ped3}. 
For $a,b\in A$, we denote by $[a,b]$ the commutator $ab-ba$. 
For a subset $F$ of $A$ and $\varepsilon>0$, we say that a completely positive (c.p.) map 
$\varphi :A\to B$ is \textit{$(F, \varepsilon)$-multiplicative} if 
$$
\|\varphi (ab)-\varphi(a)\varphi(b)\|<\varepsilon
$$ 
for any $a, b\in F$. 
Let $\mathcal{Z}$ and $M_{2^\infty}$ 
denote the Jiang-Su algebra and the CAR algebra, respectively. 
We say a C$^*$-algebra $A$ is \textit{monotracial} if $A$ has a unique tracial state 
and no unbounded traces. 
In the case where $A$ is monotracial, we denote by $\tau_{A}$ the unique tracial state on 
$A$ unless otherwise specified. 

\subsection{Razak-Jacelon algebra $\mathcal{W}$}

The \textit{Razak-Jacelon algebra} $\mathcal{W}$ is a certain simple separable nuclear 
monotracial C$^*$-algebra which is $KK$-equivalent to $\{0\}$.  
In \cite{J}, $\mathcal{W}$ is constructed as an inductive limit C$^*$-algebra 
of Razak's building blocks. 
By Razak's classification theorem \cite{Raz}, 
$\mathcal{W}$ is $M_{2^{\infty}}$-stable, and hence $\mathcal{W}$ is $\mathcal{Z}$-stable. 
In this paper, we do not assume any classification theorem for $\mathcal{W}$ other than 
Razak's classification theorem.

\subsection{Kirchberg's central sequence C$^*$-algebras}\label{sec:pre-kir}

We shall recall the definition of Kirchberg's central sequence C$^*$-algebras in \cite{Kir2}. 
Fix a free ultrafilter $\omega$ on $\mathbb{N}$. For a C$^*$-algebra $B$, put 
$$
c_{\omega}(B):=\{\{x_n\}_{n\in\mathbb{N}}\in \ell^{\infty}(\mathbb{N}, B)\; 
|\; \lim_{n \to \omega}\| x_n\| =0 \}, \; 
B^{\omega}:=\ell^{\infty}(\mathbb{N}, B)/c_{\omega}(B). 
$$
We denote by $(x_n)_n$ a representative of an element in $B^{\omega}$. 
Let $A$ be a C$^*$-subalgebra of $B^{\omega}$. 
Set 
$$
\mathrm{Ann}(A,B^{\omega}):=\{(x_n)_n\in B^{\omega}\cap A^{\prime}\; |\; (x_n)_na =0
\;\mathrm{for}\;\mathrm{any}\; a\in A \}.
$$
Then $\mathrm{Ann}(A,B^{\omega})$ is a closed ideal of $B^{\omega}\cap A^{\prime}$. 
Define a \textit{(relative) central sequence C$^*$-algebra} $F(A, B)$ 
of $A\subseteq B^{\omega}$ by 
$$
F(A, B):=B^{\omega}\cap A^{\prime}/\mathrm{Ann}(A, B^{\omega}).
$$ 
We identify $B$ with the C$^*$-subalgebra of $B^\omega$ consisting of equivalence 
classes of constant sequences.
In the case $A=B$, we denote $F(B, B)$ by $F(B)$ and call it the 
\textit{central sequence C$^*$-algebra of $B$}. 
If $A$ is $\sigma$-unital, then $F(A, B)$ is unital by \cite[Proposition 1.9]{Kir2}.  
Indeed, let $s=(s_n)_n$ be a strictly positive element in $A\subseteq B^{\omega}$. 
Since we have $\lim_{k\to \infty}s^{1/k}s=s$, taking a suitable sequence 
$\{k(n)\}_{n\in\mathbb{N}}\subset \mathbb{N}$, 
we obtain $s^{\prime}=(s_{n}^{1/k(n)})_n\in B^{\omega}$ such that $s^{\prime}s=s$. 
Then it is easy to see that $s^{\prime}\in B^{\omega}\cap A^{\prime}$ and 
$[s^{\prime}]=1$ in $F(A, B)$. 
Note that the inclusion $B\subset B^{\sim}$ induces an isomorphism from $F(A, B)$ onto 
$F(A, B^{\sim})$ because we have $[xs^{\prime}]=[x]$ in $F(A, B^{\sim})$ for any 
$x\in (B^{\sim})^{\omega}\cap A^{\prime}$. 

Let $\tau_{B}$ be a tracial state on $B$. Define $\tau_{B, \omega}: B^{\omega}\to \mathbb{C}$ 
by $\tau_{B, \omega} ((x_n)_n) = \lim_{n\to\omega} \tau_{B}(x_n)$ for any $(x_n)_n\in B^{\omega}$. 
Since $\omega$ is an ultrafilter, it is easy to see that $\tau_{B, \omega}$ is a well-defined 
tracial state on $B^{\omega}$. 
The following proposition is a relative version of \cite[Proposition 2.1]{Na3}.

\begin{pro}\label{pro:trace-induce}
Let $B$ be a C$^*$-algebra with a faithful tracial state $\tau_{B}$, 
and let $A$ be a C$^*$-subalgebra of $B^{\omega}$. 
Assume that  $\tau_{B, \omega}|_{A}$ is a state. 
Then $\tau_{B, \omega}((x_n)_n)=0$ for any $(x_n)_n\in \mathrm{Ann}(A,B^{\omega})$. 
\end{pro}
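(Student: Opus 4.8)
The plan is to exploit the positivity of elements in $\mathrm{Ann}(A,B^{\omega})$ together with the fact that $\tau_{B,\omega}$ is tracial and $\tau_{B,\omega}|_A$ is a state, so in particular there is an approximate unit for $A$ on which $\tau_{B,\omega}$ tends to $1$. First I would reduce to the case of a positive element: if $(x_n)_n\in\mathrm{Ann}(A,B^{\omega})$, then writing $x=(x_n)_n$ we have $x^*x\in\mathrm{Ann}(A,B^{\omega})$ as well (the annihilator is an ideal of $B^{\omega}\cap A'$, hence a $*$-algebra), and by the Cauchy--Schwarz inequality for the tracial state $\tau_{B,\omega}$ it suffices to show $\tau_{B,\omega}(a)=0$ for every positive $a\in\mathrm{Ann}(A,B^{\omega})$.

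Next I would bring in a strictly positive element $s=(s_n)_n$ of $A$, exactly as in the discussion preceding the proposition, and recall that there is $s'=(s_n^{1/k(n)})_n\in B^{\omega}\cap A'$ with $s's=s$ and $[s']=1$ in $F(A,B)$; more relevantly, $(s')^{1/m}$ for $m\in\mathbb{N}$ gives a sequence in $A^{\omega}\cap A'$ with $\tau_{B,\omega}((s')^{1/m})\to 1$ as $m\to\infty$, using that $\tau_{B,\omega}|_A$ is a state and $s'$ acts as a unit for $A$ in $F(A,B)$ (so $(s')^{1/m}a\to a$ for $a\in A$, whence $\tau_{B,\omega}((s')^{1/m}a)\to\tau_{B,\omega}(a)$, and taking $a$ along an approximate identity of $A$ forces $\tau_{B,\omega}((s')^{1/m})\to 1$). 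The key point is that for $a\in\mathrm{Ann}(A,B^{\omega})$ we have $as=0$, hence $a^{1/2}s=0$, hence $a^{1/2}s'=0$ (since $s'$ is a norm-limit of polynomials in $s$ with no constant term, applied to $s$ — more carefully, $s'$ is a limit of elements of the form $ss^{(k)}$, so $a^{1/2}s'=0$), and therefore $a^{1/2}(s')^{1/m}=0$ for all $m$, giving $a^{1/2}(1-(s')^{1/m})=a^{1/2}$.

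Then I would estimate: for $a$ positive in $\mathrm{Ann}(A,B^{\omega})$ with $\|a\|\le 1$,
$$
\tau_{B,\omega}(a)=\tau_{B,\omega}\bigl(a^{1/2}(1-(s')^{1/m})a^{1/2}\bigr)\le \|a\|\,\tau_{B,\omega}(1-(s')^{1/m})\le \tau_{B,\omega}(1)-\tau_{B,\omega}((s')^{1/m}),
$$
using that $0\le 1-(s')^{1/m}\le 1$ and $a^{1/2}(1-(s')^{1/m})a^{1/2}\le \|a\|(1-(s')^{1/m})$ together with positivity of $\tau_{B,\omega}$. Since $\tau_{B,\omega}((s')^{1/m})\to 1=\tau_{B,\omega}(1)$ as $m\to\infty$, the right-hand side tends to $0$, forcing $\tau_{B,\omega}(a)=0$. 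The main obstacle, and the place requiring care, is the claim $\tau_{B,\omega}((s')^{1/m})\to 1$: this is where the hypothesis ``$\tau_{B,\omega}|_A$ is a state'' is genuinely used, and one must argue that the unit $s'$ of $F(A,B)$, though not literally an element of $A$, can be approximated in trace by elements of $A$ — concretely, by choosing a sequence $(e_n)_n$ in $A$ with $e_n\,(s')\to s'$ and $\tau_{A}(e_n)\to 1$, which exists because $A$ is $\sigma$-unital with $\tau_{B,\omega}|_A$ a state on $A$. Once this approximate-unit trace estimate is in hand, the rest is the short Cauchy--Schwarz and positivity argument above.
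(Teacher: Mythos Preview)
Your overall strategy---pick an approximate-unit-type element whose trace tends to $1$ and combine with the annihilator condition---is the right one, but the implementation has a genuine gap. The element $s'=(s_n^{1/k(n)})_n$ lies in $B^{\omega}\cap A'$, not in $A$, and your assertion that $a^{1/2}s'=0$ (justified by ``$s'$ is a norm-limit of polynomials in $s$ with no constant term'') is false in general. Concretely, take $B=C_0(0,1]$ with $\tau_B$ given by Lebesgue measure, let $A=B\subset B^{\omega}$ be the constant sequences, $s(t)=t$, and choose $k(n)\to\infty$; if $x_n$ is a norm-one positive function supported in $(0,2^{-k(n)}]$, then $(x_n)_n\in\mathrm{Ann}(A,B^{\omega})$ (since $\|x_nb\|\le\sup_{t\le 2^{-k(n)}}|b(t)|\to 0$ for each $b\in C_0(0,1]$), yet $\|x_n\,s_n^{1/k(n)}\|\ge (2^{-k(n)})^{1/k(n)}=1/2$, so $(x_n)_n\,s'\neq 0$. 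The annihilator property $xa=0$ is only guaranteed for $a\in A$, and $s'$ is not a norm limit of elements of $A$.

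The fix is exactly what the paper does: work with an approximate unit $\{h_\lambda\}$ of $A$ itself (which exists with no $\sigma$-unitality hypothesis---note the proposition does not assume $A$ is $\sigma$-unital, so introducing a strictly positive $s$ is already an extra assumption). Then $xh_\lambda=0$ genuinely holds for $x\in\mathrm{Ann}(A,B^{\omega})$, and since $\tau_{B,\omega}|_A$ is a state one has $\tau_{B,\omega}(h_\lambda)\to 1$. A single Cauchy--Schwarz estimate $|\tau_{B,\omega}(x)|=|\tau_{B,\omega}((1-h_\lambda)x)|\le \tau_{B,\omega}((1-h_\lambda)^2)^{1/2}\tau_{B,\omega}(x^*x)^{1/2}\le (1-\tau_{B,\omega}(h_\lambda))^{1/2}\|x\|$ then finishes the argument for arbitrary $x$, without the preliminary reduction to positive elements. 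If you prefer to keep the $\sigma$-unital setup, replacing $(s')^{1/m}$ by $s^{1/m}\in A$ throughout repairs your argument for the same reason.
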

\begin{proof}
Let $\{h_{\lambda}\}_{\lambda\in \Lambda}$ be an approximate unit for $A$. 
Since $\tau_{B, \omega}|_{A}$ is a state, 
we have $\lim\tau_{B, \omega} (h_{\lambda})=1$. 
The rest of proof is same as the proof of \cite[Proposition 2.1]{Na3}. 
\end{proof}
By the proposition above, 
if $\tau_{B, \omega}|_{A}$ is a state, then $\tau_{B, \omega}$ induces a tracial state on $F(A, B)$. 
We denote it by the same symbol $\tau_{B, \omega}$ for simplicity.

\subsection{Invertible elements in unitization algebras}

Let $\mathrm{GL}(A^{\sim})$ denote the set of invertible elements in $A^{\sim}$. 
The following proposition is trivial if $1_{A^{\sim}}=1_{B^{\sim}}$. 
\begin{pro}\label{pro:inclusion-gl}
Let $A\subseteq B$ be an inclusion of C$^*$-algebras. Then 
$\mathrm{GL}(A^{\sim})\subset \overline{\mathrm{GL}(B^{\sim})}$. 
\end{pro}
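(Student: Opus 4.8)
The plan is to isolate the only non-trivial configuration and then exhibit an explicit family of invertibles of $B^{\sim}$ converging to a given invertible of $A^{\sim}$. Write $p:=1_{A^{\sim}}$ and view it as a projection in $B^{\sim}$ through the injective, unital-onto-its-image $*$-homomorphism $A^{\sim}\hookrightarrow B^{\sim}$ induced by $A\subseteq B$. Since $1_{A^{\sim}}$ is a unit for $A^{\sim}$, every $z\in A^{\sim}$ satisfies $pzp=z$, so $A^{\sim}\subseteq pB^{\sim}p$. If $p=1_{B^{\sim}}$ the statement is immediate, since the inverse of an element of $\mathrm{GL}(A^{\sim})$ already lies in $A^{\sim}\subseteq B^{\sim}$; so the content is the case $p\neq 1_{B^{\sim}}$, i.e.\ $A$ unital with $1_{A}$ a proper projection of $B^{\sim}$.

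For that case, I would fix $x\in\mathrm{GL}(A^{\sim})$ with inverse $y\in A^{\sim}$, so $xy=yx=p$, and for $\varepsilon>0$ consider
$$
x_{\varepsilon}:=x+\varepsilon\bigl(1_{B^{\sim}}-p\bigr)\in B^{\sim}.
$$
Because $x$ and $y$ are fixed by multiplication by $p$ on both sides, each of $x,y$ is orthogonal to $1_{B^{\sim}}-p$; using this together with $xy=yx=p$, a one-line computation shows that $y+\varepsilon^{-1}(1_{B^{\sim}}-p)$ is a two-sided inverse of $x_{\varepsilon}$, so $x_{\varepsilon}\in\mathrm{GL}(B^{\sim})$. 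Since $1_{B^{\sim}}-p$ is a projection, $\|x_{\varepsilon}-x\|=\varepsilon\,\|1_{B^{\sim}}-p\|\le\varepsilon$, and letting $\varepsilon\to0^{+}$ yields $x\in\overline{\mathrm{GL}(B^{\sim})}$. As $x$ was arbitrary, $\mathrm{GL}(A^{\sim})\subseteq\overline{\mathrm{GL}(B^{\sim})}$.

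There is no real obstacle here; the proposition is elementary once the right perturbation is chosen. The only thing to be careful about is the bookkeeping of the two units — checking that $A^{\sim}\hookrightarrow B^{\sim}$ is well defined and injective in all four unital/non-unital configurations, and remembering that $\mathrm{GL}(A^{\sim})$ refers to invertibility with respect to $1_{A^{\sim}}$ and not $1_{B^{\sim}}$. This is precisely why one "fills in the complementary corner" with $\varepsilon(1_{B^{\sim}}-p)$ rather than with $1_{B^{\sim}}-p$ itself: the scaling by $\varepsilon$ keeps the perturbation arbitrarily small while still producing a genuine two-sided inverse in $B^{\sim}$.
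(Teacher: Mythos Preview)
Your proof is correct, and the underlying idea is the same as the paper's: perturb $x$ by a small multiple of the complementary projection to make it invertible in $B^{\sim}$. The paper adds $\varepsilon\,1_{B^{\sim}}$ (which equals $\varepsilon p+\varepsilon(1_{B^{\sim}}-p)$) rather than your $\varepsilon(1_{B^{\sim}}-p)$, and then verifies invertibility indirectly via spectra: openness of $\mathrm{GL}(A^{\sim})$ gives $-\varepsilon\notin\mathrm{Sp}_{A^{\sim}}(x)$ for small $\varepsilon>0$, and the standard identity $\mathrm{Sp}_{B}(x)\cup\{0\}=\mathrm{Sp}_{A}(x)\cup\{0\}$ then yields $x+\varepsilon\,1_{B^{\sim}}\in\mathrm{GL}(B^{\sim})$. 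Your route is slightly more elementary and self-contained --- you exhibit the inverse $y+\varepsilon^{-1}(1_{B^{\sim}}-p)$ by hand, so no spectral permanence or openness argument is needed --- while the paper's route is terser and leans on a fact the reader presumably already has available.
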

\begin{proof}
Let $x\in \mathrm{GL}(A^{\sim})$. 
There exists $\varepsilon_0>0$ such that for any $0\leq \varepsilon < \varepsilon_0$, 
$x+\varepsilon 1_{A^{\sim}}\in \mathrm{GL}(A^{\sim})$ because $\mathrm{GL}(A^{\sim})$ 
is open. Since we have $\mathrm{Sp}_A(x)\cup \{0\}=\mathrm{Sp}_B(x)\cup\{0\}$, 
$x+\varepsilon 1_{B^{\sim}}\in \mathrm{GL}(B^{\sim})$ for any $0< \varepsilon < \varepsilon_0$. 
Therefore $x\in \overline{\mathrm{GL}(B^{\sim})}$. 
\end{proof}

The following corollary is an immediate consequence of the proposition above. 

\begin{cor}\label{cor:inclusion-gl}
Let $\{A_n\}_{n\in\mathbb{N}}$ be a sequence of C$^*$-algebras with $A_{n}\subseteq A_{n+1}$, 
and let $A=\overline{\bigcup_{n=1}^\infty A_n}$. 
If $A_n\subseteq \overline{\mathrm{GL}(A_n^{\sim})}$ for any $n\in\mathbb{N}$, 
then $A \subseteq \overline{\mathrm{GL}(A^{\sim})}$. 
\end{cor}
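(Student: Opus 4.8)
The plan is to deduce the corollary from Proposition \ref{pro:inclusion-gl} together with a straightforward density argument. Fix $a\in A$ and $\varepsilon>0$; I must produce an invertible element of $A^{\sim}$ within $\varepsilon$ of $a$. Since $A=\overline{\bigcup_{n}A_{n}}$, there is some $n$ and some $a_{n}\in A_{n}$ with $\|a-a_{n}\|<\varepsilon/2$. By hypothesis $a_{n}\in\overline{\mathrm{GL}(A_{n}^{\sim})}$, so there is $x\in\mathrm{GL}(A_{n}^{\sim})$ with $\|a_{n}-x\|<\varepsilon/2$. Now apply Proposition \ref{pro:inclusion-gl} to the inclusion $A_{n}\subseteq A$: it gives $\mathrm{GL}(A_{n}^{\sim})\subset\overline{\mathrm{GL}(A^{\sim})}$, so $x\in\overline{\mathrm{GL}(A^{\sim})}$, and hence there is $y\in\mathrm{GL}(A^{\sim})$ with $\|x-y\|<\varepsilon/2$ (one can absorb this last approximation into the previous one if one wants a cleaner count, but it is harmless). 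Combining the estimates, $\|a-y\|<\varepsilon$, so $a\in\overline{\mathrm{GL}(A^{\sim})}$.

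One point deserves care: Proposition \ref{pro:inclusion-gl} is stated for an inclusion $A\subseteq B$ of C$^{*}$-algebras, and here I am invoking it with $B=A$ (the ambient limit) and the ``$A$'' of the proposition being $A_{n}$. The proposition's proof only used that $\mathrm{Sp}_{A_{n}}(x)\cup\{0\}=\mathrm{Sp}_{A}(x)\cup\{0\}$ for $x\in\mathrm{GL}(A_{n}^{\sim})$, which holds for any inclusion of C$^{*}$-algebras, so there is no issue—though it is worth noting explicitly that we are not assuming $A_{n}$ is unital or that the units match. The fact that $A=\overline{\bigcup_{n}A_{n}}$ is itself a C$^{*}$-algebra (the closure of an increasing union of C$^{*}$-subalgebras) is standard.

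I do not anticipate a genuine obstacle here; the only mild subtlety is bookkeeping with the two layers of approximation (from the inductive limit and from the hypothesis $A_{n}\subseteq\overline{\mathrm{GL}(A_{n}^{\sim})}$) plus the approximation implicit in Proposition \ref{pro:inclusion-gl}, but all three are controlled by triangle inequality and can be made to sum to less than any prescribed $\varepsilon$. So the corollary is essentially an immediate consequence of the proposition, exactly as the text asserts.
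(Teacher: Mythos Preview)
Your argument is correct and is exactly the unpacking of what the paper means by ``immediate consequence'': approximate $a\in A$ by some $a_n\in A_n$, approximate $a_n$ by an invertible in $A_n^{\sim}$, and then invoke Proposition~\ref{pro:inclusion-gl} to land in $\overline{\mathrm{GL}(A^{\sim})}$. The only cosmetic slip is that your three $\varepsilon/2$'s sum to $3\varepsilon/2$ rather than $\varepsilon$, but as you yourself note this is harmless since $\varepsilon$ is arbitrary.
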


The following proposition is well-known if $B$ is unital. See, for example, 
the proof of \cite[Proposition 3.2]{Sc}.

\begin{pro}\label{pro:non-sep-gl}
Let $B$ be a C$^*$-algebra with $B\subseteq \overline{\mathrm{GL}(B^{\sim})}$. Then 
$B^{\omega}\subseteq \overline{\mathrm{GL}((B^{\omega})^{\sim})}$. 
\end{pro}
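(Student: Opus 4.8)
The plan is to reduce the non-separable statement for $B^{\omega}$ to the hypothesis on $B$ by a standard sequence-of-separable-subalgebras argument, using Proposition \ref{pro:inclusion-gl} and Corollary \ref{cor:inclusion-gl} to keep track of unitizations as we enlarge subalgebras. The starting point is that an element $x\in B^{\omega}$ together with any prescribed finite amount of approximation data only involves countably much information, so we can find a separable C$^*$-subalgebra of $B$ carrying that data; iterating gives a separable subalgebra $B_{0}\subseteq B$ such that $x$ (and the candidate approximating invertibles) lies in $B_{0}^{\omega}$.

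More precisely, here is how I would carry it out. Fix $x=(x_{n})_{n}\in B^{\omega}$ and $\varepsilon>0$; I want $y\in \mathrm{GL}((B^{\omega})^{\sim})$ with $\|x-y\|<\varepsilon$. For each $n$, since $B\subseteq\overline{\mathrm{GL}(B^{\sim})}$, choose $y_{n}\in\mathrm{GL}(B^{\sim})$ with $\|x_{n}-y_{n}\|<\varepsilon$; write $y_{n}=b_{n}+\lambda_{n}1_{B^{\sim}}$ with $b_{n}\in B$, $\lambda_{n}\in\mathbb{C}$, and let $z_{n}=b_{n}'+\mu_{n}1_{B^{\sim}}$ be its inverse. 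Set $y=(y_{n})_{n}$, viewed in $(B^{\omega})^{\sim}$ via $y=(b_{n})_{n}+\Lambda\,1_{(B^{\omega})^{\sim}}$ where one first has to arrange, by the usual trick of splitting into the ultrafilter-limit part and a null part, that the scalars $\lambda_{n}$ may be taken eventually constant equal to some $\Lambda$ along $\omega$ — or, more cleanly, note $\lim_{n\to\omega}\lambda_{n}=\Lambda$ exists because $(x_{n})_{n}$ is bounded and $|\lambda_{n}|\le \|y_{n}\|\le\|x_{n}\|+\varepsilon$, and absorb the discrepancy $(\lambda_{n}-\Lambda)1$ into the $B$-part, which is harmless since it goes to $0$ along $\omega$. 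The point is then that $y$ has an inverse in $(B^{\omega})^{\sim}$, namely (the class of) $(z_{n})_{n}$, provided $\sup_{n}\|z_{n}\|<\infty$; and boundedness of the inverses is where I need to be slightly careful.

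The main obstacle is exactly that last point: a priori $\|y_{n}^{-1}\|$ could blow up as $n\to\infty$, in which case $(z_{n})_{n}$ is not an element of $(B^{\omega})^{\sim}$ at all, and $y$ need not be invertible in the ultrapower despite each $y_{n}$ being invertible in $B^{\sim}$. The fix is to not pass to exact invertibles but to use the openness of $\mathrm{GL}$ more carefully, or better: replace each $y_{n}$ by an element of $\mathrm{GL}(B^{\sim})$ whose distance to $x_{n}$ is $<\varepsilon$ \emph{and} whose inverse is bounded by a constant depending only on $\|x\|$ and $\varepsilon$. This is achieved by choosing $y_{n}$ within distance $\varepsilon/2$ of $x_{n}$ first; then every element within $\varepsilon/2$ of $y_{n}$ is within $\varepsilon$ of $x_{n}$, and since $\mathrm{GL}(B^{\sim})$ is open there is $\delta_{n}>0$ with the $\delta_{n}$-ball around $y_{n}$ inside $\mathrm{GL}(B^{\sim})$ — but $\delta_{n}$ is not yet uniform. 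The genuinely uniform statement one should invoke is: if $\|a-g\|<\varepsilon/2$ with $g$ invertible, then perturbing to make the inverse bounded is automatic via $\|g^{-1}\|\le(\mathrm{dist}(a,\text{non-invertibles}))^{-1}$ has no lower bound — so instead I would argue that it suffices to prove $x$ lies in $\overline{\mathrm{GL}((B^{\omega})^{\sim})}$, and for that reduce to a separable subalgebra: let $B_{0}=C^{*}(\{x_{n},y_{n},z_{n}:n\in\mathbb{N}\})\cap(\text{bounded part})$...

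Let me restate the clean plan. The real strategy — and the one I expect the authors use — is: (1) it is enough to show each $x\in B^{\omega}$ is approximated by invertibles in $(B^{\omega})^{\sim}$; (2) reduce to the \emph{separable} case by finding a separable $C^{*}$-subalgebra $B_{0}\subseteq B$ with $x\in B_{0}^{\omega}$ and, crucially, $B_{0}\subseteq\overline{\mathrm{GL}(B_{0}^{\sim})}$ — the latter requires building $B_{0}$ as an increasing union $\overline{\bigcup B_{0,k}}$ where at each stage one throws in, for every element produced so far, a sequence of invertibles of $B^{\sim}$ (with their inverses) approximating it, then invokes Corollary \ref{cor:inclusion-gl}; (3) in the separable case, $B_{0}^{\omega}$ is the ultrapower of a separable algebra with $B_{0}\subseteq\overline{\mathrm{GL}(B_{0}^{\sim})}$, and here one can quote (or reprove) the known fact — cited from the proof of \cite[Proposition 3.2]{Sc} in the unital case, adapted via the isomorphism $F$-type identification $B^{\omega}\hookrightarrow(B^{\sim})^{\omega}$ and $(B^{\sim})^{\omega}=((B^{\sim})^{\omega})^{\sim}$-considerations — that the ultrapower inherits the approximation-by-invertibles property, because for a \emph{separable} $B_{0}$ one gets uniform control: given $x=(x_n)_n$, choose $y_n$ invertible in $B_0^\sim$ with $\|x_n-y_n\|<\varepsilon$, and then $\|y_n^{-1}\|$ can be taken bounded by passing along $\omega$ to a subsequence where $\|y_n^{-1}\|$ is either bounded (done) or $\to\infty$, and in the latter case $\mathrm{dist}(x_n,\mathrm{GL}(B_0^\sim))$... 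The honest summary: \textbf{the hard part is the uniform bound on the norms of the inverses}, and it is handled by the reduction to separable $B_0$ combined with the observation that, along $\omega$, one may assume $\|y_n^{-1}\|$ converges in $[0,\infty]$ and rule out $\infty$ using that $x\in B_0^\omega$ together with the ultrafilter-stability of invertibility; once $(y_n^{-1})_n$ is a bona fide element of $(B_0^\omega)^\sim$, it is a two-sided inverse of $(y_n)_n$, giving $(y_n)_n\in\mathrm{GL}((B_0^\omega)^\sim)\subseteq\mathrm{GL}((B^\omega)^\sim)$ within $\varepsilon$ of $x$, as desired. Throughout, Propositions \ref{pro:inclusion-gl} and the remark that $B\subset B^\sim$ induces no change in the relevant $F$-algebras let me freely move between $B$, $B^\sim$, $B_0$, $B_0^\sim$ and their ultrapowers without the scalars causing trouble.
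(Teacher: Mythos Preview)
Your proposal has a genuine gap: you correctly identify the obstacle---that the inverses $y_n^{-1}$ of your approximating invertibles may have unbounded norm, so $(y_n)_n$ need not be invertible in $(B^{\omega})^{\sim}$---but your proposed fix does not resolve it. Reducing to a separable subalgebra $B_0\subseteq B$ with $B_0\subseteq\overline{\mathrm{GL}(B_0^{\sim})}$ buys you nothing here: the exact same unbounded-inverse problem arises for $B_0^{\omega}$, and separability gives no uniform control on $\|y_n^{-1}\|$. Your appeal to an ``ultrafilter-stability of invertibility'' argument to rule out $\|y_n^{-1}\|\to\infty$ is left as a hand-wave and in fact has no content: nothing prevents $\mathrm{dist}(x_n,\text{non-invertibles})\to 0$ along $\omega$.

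The paper's proof sidesteps the issue entirely via \emph{polar decomposition}. Choose $z_n\in\mathrm{GL}(B^{\sim})$ with $\|x_n-z_n\|\to 0$ along $\omega$, and write $z_n=u_n|z_n|$ with $u_n=z_n(z_n^*z_n)^{-1/2}$ a unitary in $B^{\sim}$. Unitaries are automatically norm-one with norm-one inverse, so $(u_n)_n$ is a perfectly good element of $(B^{\sim})^{\omega}$; writing $u_n=y_n+\lambda_n 1_{B^{\sim}}$ with $|\lambda_n|=1$ and passing to $\lambda_0=\lim_{n\to\omega}\lambda_n$ shows $(u_n)_n=(y_n)_n+\lambda_0 1\in(B^{\omega})^{\sim}$ is unitary there. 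Since $(|z_n|)_n=|(x_n)_n|$ in the ultrapower, one has $(x_n)_n=(u_n)_n|(x_n)_n|$, and then
\[
\bigl((y_n)_n+\lambda_0 1\bigr)\bigl(|(x_n)_n|+\varepsilon 1\bigr)\in\mathrm{GL}\bigl((B^{\omega})^{\sim}\bigr)
\]
converges to $(x_n)_n$ as $\varepsilon\to 0$. The entire difficulty you struggled with evaporates because the ``bad'' (possibly unbounded-inverse) part of $z_n$ is pushed into the positive factor $|z_n|$, which is then replaced by $|x|+\varepsilon 1$, manifestly invertible with inverse of norm $\le\varepsilon^{-1}$.
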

\begin{proof}
We shall show only the case where $B$ is non-unital. 
Let $(x_n)_n\in B^{\omega}$. Because of $B\subseteq \overline{\mathrm{GL}(B^{\sim})}$, 
there exists $(z_n)_n\in (B^{\sim})^{\omega}$ such that $z_n\in \mathrm{GL}(B^{\sim})$ 
for any $n\in\mathbb{N}$ and $(x_n)_n=(z_n)_n$ in $(B^{\sim})^{\omega}$. 
For any $n\in\mathbb{N}$, put $u_n:=z_n(z_n^*z_n)^{-1/2}$. Then $u_n$ is a unitary element and 
$z_n=u_n(z_n^*z_n)^{1/2}$. Note that we have $(x_n)_n= (u_n)_n(x_n^*x_n)_n^{1/2}$. 
For any $n\in\mathbb{N}$, there exist $y_n\in B$ and $\lambda_n\in\mathbb{C}$ such that 
$u_n=y_n+\lambda_n 1_{B^{\sim}}$ and $|\lambda_n|=1$ because $u_n$ is a unitary element 
in $B^{\sim}$. Since $\omega$ is an ultrafilter, there exists $\lambda_0\in\mathbb{C}$ such that 
$\lim_{n\to\omega}\lambda_n=\lambda_0$. 
Hence 
$$
(u_n)_n= (y_n)_n+ \lambda_0 1_{(B^{\omega})^{\sim}}\in (B^{\omega})^{\sim}.
$$ 
Since we have 
$$
((y_n)_n+ \lambda_0 1_{(B^{\omega})^{\sim}})((x_n^*x_n)_n^{1/2}+\varepsilon 1_{(B^{\omega})^{\sim}})
\to (x_n)_n
$$
as $\varepsilon\to 0$, $(x_n)_n\in \overline{\mathrm{GL}((B^{\omega})^{\sim})}$. 
\end{proof}

Note that if $B$ has almost stable rank one (see \cite{Rob2} for the definition), then 
$B\subseteq \overline{\mathrm{GL}(B^{\sim})}$. Also, if $B$ is unital, then 
$B\otimes\mathbb{K}\subseteq \overline{\mathrm{GL}((B\otimes\mathbb{K})^{\sim}})$ where 
$\mathbb{K}$ is the C$^*$-algebra of compact operators on an infinite-dimensional separable 
Hilbert space. 

\subsection{Matui-Sato's technique}\label{sec:MS}

We shall review Matui and Sato's technique in \cite{MS}, \cite{MS2} and \cite{MS3}. 
Let $B$ be a monotracial C$^*$-algebra, 
and let $A$ be a simple separable nuclear monotracial C$^*$-subalgebra of $B^{\omega}$. 
Assume that $\tau_{B}$ is faithful and $\tau_{B, \omega}|_{A}$ is a state. 
Consider the Gelfand-Naimark-Segal (GNS) representation $\pi_{\tau_B}$ of 
$B$ associated with $\tau_{B}$, and put 
$$
M:= \ell^{\infty}(\mathbb{N}, \pi_{\tau_B}(B)^{''})/\{\{x_n\}_{n\in\mathbb{N}}\; |\; 
\tilde{\tau}_{B, \omega}((x_n^*x_n)_n):=\lim_{n\to\omega} \tilde{\tau}_B(x_n^*x_n)=0 \}
$$
where $ \tilde{\tau}_B$ is the unique normal extension of $\tau_B$ on $\pi_{\tau_B}(B)^{''}$. 
Note that $M$ is a von Neumann algebraic ultrapower of $\pi_{\tau_B}(B)^{''}$ and  
$\tilde{\tau}_{B,\omega}$ is a faithful normal tracial state on $M$. 
Since $B$ is monotracial, $\pi_{\tau_B}(B)^{''}$ is a finite factor, and hence 
$M$ is also a finite factor. 
Define a homomorphism $\varrho$ from $B^{\omega}$ to $M$ by 
$\varrho ((x_n)_n)=(\pi_{\tau_B}(x_n))_n$. Kaplansky's density theorem implies 
that $\varrho$ is surjective. 
Moreover, \cite[Theorem 3.1]{MS3} (see also \cite[Theorem 3.3]{KR}) implies that 
the restriction $\varrho$ on $B^{\omega}\cap A^{\prime}$ is a surjective homomorphism 
onto $M\cap \varrho (A)^{\prime}$.

\begin{pro}\label{pro:factor}
With notation as above, $M\cap \varrho (A)^{\prime}$ is a finite factor. 
\end{pro}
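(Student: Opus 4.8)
My plan is to treat the two halves of the statement separately. The assertion that $M\cap\varrho(A)'$ is \emph{finite} is immediate: as recalled above, $\tilde{\tau}_{B,\omega}$ is a faithful normal tracial state on $M$, so its restriction to the von Neumann subalgebra $M\cap\varrho(A)'$ is again a faithful normal tracial state, and hence $M\cap\varrho(A)'$ is finite. The content is therefore \emph{factoriality}, which I would establish by showing $Z(M\cap\varrho(A)')=\mathbb{C}1$.

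The first step is to identify $\varrho(A)''$. Since $\tau_{B,\omega}$ is a trace on $B^{\omega}$ and $\tau_{B,\omega}|_{A}$ is a state, it is a tracial state on $A$, hence equals $\tau_{A}$ by monotraciality; as $A$ is simple, $\tau_{A}$ is faithful, and since $\tilde{\tau}_{B,\omega}\circ\varrho=\tau_{B,\omega}$ on $B^{\omega}$ while $\tilde{\tau}_{B,\omega}$ is faithful on $M$, the restriction $\varrho|_{A}$ is injective and $\varrho(A)''$ is isomorphic to the weak closure $\pi_{\tau_{A}}(A)''$ of $A$ in the GNS representation of $\tau_{A}$ (one also checks $1_{M}\in\varrho(A)''$, using that $\tau_{A}$ is a state so that an approximate unit of $\varrho(A)$ converges weakly to $1_{M}$). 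Because $A$ is nuclear this von Neumann algebra is injective, and because $A$ is monotracial it is a factor: a non-trivial central projection of $\varrho(A)''$ would, via the faithful normal trace, produce two distinct tracial states on $A$. Thus $\varrho(A)''$ is a hyperfinite (possibly finite-dimensional) factor. I would then write it as the weak closure of an increasing sequence of finite-dimensional unital subfactors $R_{1}\subseteq R_{2}\subseteq\cdots$, so that $M\cap\varrho(A)'=\bigcap_{k}(R_{k}'\cap M)$, and for each $k$ the matrix units of $R_{k}$ give the tensor splitting $M=R_{k}\,\overline{\otimes}\,(R_{k}'\cap M)$, in which $R_{k}'\cap M$ is a finite \emph{factor}, being a corner of the factor $M$.

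The remaining — and main — point is to descend factoriality from the subalgebras $R_{k}'\cap M$ to their intersection $M\cap\varrho(A)'$, and this is the part I expect to be the real obstacle. A central element $z$ of $M\cap\varrho(A)'$ lies in each $R_{k}'\cap M$ and commutes with $R_{k}'\cap\varrho(A)''$ (since $z\in\varrho(A)'$) and with $M\cap\varrho(A)'$; but $R_{k}'\cap\varrho(A)''$ together with $M\cap\varrho(A)'$ does not, in general, generate $R_{k}'\cap M$, so one cannot simply invoke factoriality of $R_{k}'\cap M$. Instead I would run a reindexing argument in the style of Matui and Sato and of Kirchberg and R\o rdam \cite{KR}: lift $z$ to a bounded self-adjoint sequence through the surjection $\varrho\colon B^{\omega}\cap A'\twoheadrightarrow M\cap\varrho(A)'$ of \cite[Theorem 3.1]{MS3}, use the hyperfiniteness of $\varrho(A)''$ (equivalently, its McDuff-type self-absorption) to spread the approximately central data over the tail of the sequence $(R_{k})_{k}$, and combine this with a diagonalisation along $\omega$ to force $z$ to be scalar — the point being that if $z$ were not scalar one could manufacture, from a suitable unitary deep inside the tail, an element of $M\cap\varrho(A)'$ failing to commute with $z$, contradicting its centrality. (Alternatively, one can phrase the reindexing through a von Neumann algebra analogue of Kirchberg's reindexing embedding for central sequence algebras, cf.\ \cite{Kir2}.) This proves $Z(M\cap\varrho(A)')=\mathbb{C}1$, so $M\cap\varrho(A)'$ is a factor; it will in fact be a II$_1$ factor except in degenerate situations.
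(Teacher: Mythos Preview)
Your approach diverges sharply from the paper's, and the step you flag as ``the real obstacle'' is indeed left as a gap. The paper does not attack $Z(M\cap\varrho(A)')$ directly at all; instead it shows that $M\cap\varrho(A)'$ is \emph{monotracial}, which for a von Neumann algebra carrying a faithful normal tracial state is equivalent to factoriality. The mechanism is short: for any positive contraction $x_0\in M\cap\varrho(A)'$, the functional $a\mapsto\tilde{\tau}_{B,\omega}(\varrho(a)x_0)$ on $A$ is tracial (because $x_0$ commutes with $\varrho(A)$) and positive, so by monotraciality of $A$ it equals $t\cdot\tau_{B,\omega}|_A$ for some $t\geq 0$; an approximate-unit computation gives $t=\tilde{\tau}_{B,\omega}(x_0)$, yielding the splitting $\tilde{\tau}_{B,\omega}(\varrho(a)x_0)=\tilde{\tau}_{B,\omega}(\varrho(a))\,\tilde{\tau}_{B,\omega}(x_0)$. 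One then invokes \cite[Lemma~3.21]{BBSTWW}, which says that every tracial state $\tau$ on $M\cap\varrho^{\sim}(A^{\sim})'$ has the form $\tau(x)=\tilde{\tau}_{B,\omega}(\varrho^{\sim}(a)x)$ for some positive $a\in A^{\sim}$ with $\tilde{\tau}_{B,\omega}(\varrho^{\sim}(a))=1$; combined with the splitting this forces $\tau=\tilde{\tau}_{B,\omega}$. That is the whole proof.

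Your route via hyperfiniteness of $\varrho(A)''$ and reindexing is in principle viable, but the part you leave unwritten is genuinely non-trivial. Note that factoriality of $\varrho(A)''$ by itself is insufficient: there exist subfactors $N\subseteq P$ of II$_1$ factors with $N'\cap P$ not a factor (for instance $N=\{\mathrm{diag}(x,\alpha(x)):x\in R_0\}\subseteq M_2(R_0)$ for an outer automorphism $\alpha$ of a II$_1$ factor $R_0$ has $N'\cap M_2(R_0)\cong\mathbb{C}^2$). So you cannot avoid using the ultrapower structure of $M$, and the diagonal/reindexation argument you sketch (``manufacture a unitary deep inside the tail'') would need to be made precise --- in particular one must explain exactly how hyperfiniteness of $\varrho(A)''$ lets you build, for a non-scalar central $z$, an element of $M\cap\varrho(A)'$ that fails to commute with $z$. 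The paper's monotraciality argument sidesteps all of this: it uses the hypothesis that $A$ is monotracial \emph{directly} rather than passing through structural properties of $\varrho(A)''$, and it never touches reindexing.
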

\begin{proof} 
Note that $\tilde{\tau}_{B,\omega}$ is the unique tracial state on $M$ since $M$ is a finite factor. 
It is enough to show that $M\cap \varrho (A)^{\prime}$ is monotracial. Let $\tau$ be a tracial state 
on $M\cap \varrho (A)^{\prime}$. 
Since we assume that $\tau_{B, \omega}|_{A}$ is a state, we see that if $A$ is unital, 
then $\varrho (1_A)= 1_{M}$. Hence $\varrho$ can be extended to a unital homomorphism 
$\varrho^{\sim}$ from $A^{\sim}$ to $M$, and 
$M\cap \varrho (A)^{\prime}= M\cap \varrho^{\sim}(A^{\sim})^{\prime}$.
By \cite[Lemma 3.21]{BBSTWW}, there exists a positive element $a$ in $A^{\sim}$ such that 
$\tilde{\tau}_{B, \omega}(\varrho^{\sim} (a))=1$ and 
$\tau (x)=\tilde{\tau}_{B, \omega}(\varrho^{\sim}(a)x)$ for any 
$x\in M\cap \varrho (A)^{\prime}$. Since $A$ is monotracial, 
$$
\tau (x)=\tilde{\tau}_{B, \omega}(\varrho^{\sim}(a)x)=\tilde{\tau}_{B, \omega}(\varrho^{\sim}(a))
\tilde{\tau}_{B, \omega}(x)=\tilde{\tau}_{B,\omega}(x).
$$
Indeed, let $x_0$ be a positive contraction 
in $M\cap \varrho (A)^{\prime}$. For any $a\in A$, define 
$\tau^{\prime}(a):=\tilde{\tau}_{B, \omega}(\varrho(a)x_0)$. 
Then $\tau^{\prime}$ is a tracial positive linear functional on $A$. 
Since $A$ is monotracial and $\tau_{B,\omega}|_A$ is a tracial state on $A$, 
there exists a positive number $t$ such that $\tau^{\prime}(a)= t \tau_{B, \omega}(a)$ for any 
$a\in A$. 
Note that if $\{h_n\}_{n\in\mathbb{N}}$ is an approximate unit for $A$, 
then $t=\lim_{n\to \infty}\tau^{\prime}(h_n)$. On the other hand, we have 
\begin{align*}
|\tilde{\tau}_{B, \omega}(x_0)-\tau^{\prime}(h_n)|
& =|\tilde{\tau}_{B, \omega}((1-\varrho(h_n))x_0)|=
|\tilde{\tau}_{B, \omega}((1-\varrho(h_n))^{1/2}x_0(1-\varrho(h_n))^{1/2})| \\
& \leq 
|\tilde{\tau}_{B, \omega}(1-\varrho (h_n))|=|1-\tau_{B, \omega}(h_n)| \to 0
\end{align*}
as $n\to \infty$. Hence $t=\tilde{\tau}_{B, \omega}(x_0)$, and  
$\tilde{\tau}_{B, \omega}(\varrho(a)x_0)=\tilde{\tau}_{B, \omega}(\varrho(a))
\tilde{\tau}_{B, \omega}(x_0)$ for any $a\in A$. It is easy to see that this implies 
$\tilde{\tau}_{B, \omega}(\varrho^{\sim}(a)x)=\tilde{\tau}_{B, \omega}(\varrho^{\sim}(a))
\tilde{\tau}_{B, \omega}(x)$
for any $a\in A^{\sim}$ and $x\in M\cap \varrho (A)^{\prime}$. Therefore we have 
$\tau (x)= \tilde{\tau}_{B, \omega}(x)$ for any $x\in M\cap \varrho (A)^{\prime}$. 
Consequently, $M\cap \varrho (A)^{\prime}$ is monotracial. 
\end{proof}

For $a,b\in A_{+}$, we say that $a$ is \textit{Cuntz smaller than} $b$, written $a\precsim b$, 
if there exists a sequence $\{x_n\}_{n\in\mathbb{N}}$ of $A$ such that 
$\| x_n^*bx_n-a\|\rightarrow 0$. 
A monotracial C$^*$-algebra $B$ is said to have 
\textit{strict comparison} if for any $k\in\mathbb{N}$,  
$a,b\in M_k(B)_{+}$ with $d_{\tau_B\otimes\mathrm{Tr}_k}(a)< d_{\tau_B\otimes\mathrm{Tr_k}} (b)$ 
implies $a\precsim b$ 
where $\mathrm{Tr}_k$ is the unnormalized trace on $M_{k}(\mathbb{C})$ and 
$d_{\tau_B\otimes\mathrm{Tr}_k}(a)=\lim_{n\to\infty}\tau_B\otimes\mathrm{Tr}_k(a^{1/n})$. 
Using \cite[Lemma 5.7]{Na2}, essentially the same proofs as \cite[Lemma 3.2]{MS3} and 
\cite[Theorem 1.1]{MS} show the following proposition. 
See also the proof of \cite[Lemma 3.6]{Na}. 
\begin{pro}\label{pro:si}
Let $B$ be a monotracial C$^*$-algebra, and let $A$ be a simple 
separable non-type I nuclear monotracial C$^*$-subalgebra of $B^{\omega}$. 
Assume that $\tau_{B}$ is faithful, $\tau_{B, \omega}|_A$ is a state and $B$ has strict comparison. 
Then 
$B$ has property (SI) relative to $A$, that is, for any positive contractions 
$a$ and $b$ in $B^{\omega}\cap A^{\prime}$ satisfying  
$$
\tau_{B, \omega} (a)=0 \quad \text{and} \quad \inf_{m\in\mathbb{N}} \tau_{B, \omega} (b^m)>0,
$$
there exists an element $s$ in $B^{\omega}\cap A^{\prime}$ such that  $s^*s=a$ and $bs=s$. 
\end{pro}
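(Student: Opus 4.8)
The plan is to run Matui and Sato's proof that strict comparison forces property (SI) \cite{MS}, \cite{MS2}, \cite{MS3}, but carried out inside the relative commutant $B^{\omega}\cap A'$. The one extra input beyond those papers is the comparison result \cite[Lemma 5.7]{Na2}, which, via strict comparison of $B$ and nuclearity of $A$, decides Cuntz subequivalence of positive elements of $B^{\omega}\cap A'$ from the dimension functions induced by $\tau_{B,\omega}$, and which is also where the non-type I hypothesis on $A$ enters.

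So let $a,b$ be positive contractions in $B^{\omega}\cap A'$ with $\tau_{B,\omega}(a)=0$ and $\delta:=\inf_{m\in\mathbb{N}}\tau_{B,\omega}(b^{m})>0$. First I would recall from Section \ref{sec:MS} the von Neumann algebra bridge: the surjective homomorphism $\varrho\colon B^{\omega}\cap A'\to M\cap\varrho(A)'=:N$, with $N$ a finite factor by Proposition \ref{pro:factor}, and the faithful normal tracial state $\tilde{\tau}_{B,\omega}$ on $M$ with $\tilde{\tau}_{B,\omega}\circ\varrho=\tau_{B,\omega}$ on $B^{\omega}$. Two consequences. Since $\tau_{B,\omega}(a)=0$, $a\geq 0$ and $\tilde{\tau}_{B,\omega}$ is faithful, we have $\varrho(a)=0$, so $\varrho((a-\varepsilon)_{+})=0$ and the dimension function $d_{\tau_{B,\omega}}((a-\varepsilon)_{+}):=\lim_{m}\tau_{B,\omega}((a-\varepsilon)_{+}^{1/m})$ vanishes for every $\varepsilon>0$. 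And since $0\leq\varrho(b)\leq 1$, the powers $\varrho(b)^{m}$ decrease strongly in $N$ to $p:=\chi_{\{1\}}(\varrho(b))$, whence $\tilde{\tau}_{B,\omega}(p)=\inf_{m}\tilde{\tau}_{B,\omega}(\varrho(b)^{m})=\inf_{m}\tau_{B,\omega}(b^{m})=\delta>0$.

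The heart of the argument is the comparison step. For small $\eta>0$ choose a continuous $f_{\eta}\colon[0,1]\to[0,1]$ with $f_{\eta}\equiv 0$ on $[0,1-2\eta]$ and $f_{\eta}\equiv 1$ on $[1-\eta,1]$, and put $e_{\eta}:=f_{\eta}(b)\in B^{\omega}\cap A'$. Then $\mathrm{supp}\,\varrho(e_{\eta})\geq\chi_{\{1\}}(\varrho(b))=p$, so $d_{\tau_{B,\omega}}(e_{\eta})\geq\delta>0=d_{\tau_{B,\omega}}((a-\varepsilon)_{+})$, and \cite[Lemma 5.7]{Na2} (the relative analogue, in $B^{\omega}\cap A'$, of the comparisons in \cite[Theorem 1.1]{MS} and \cite[Lemma 3.2]{MS3}) gives $(a-\varepsilon)_{+}\precsim e_{\eta}$ in $B^{\omega}\cap A'$. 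By the standard Cuntz-subequivalence lemma, for each $\varepsilon'>0$ there is $r\in B^{\omega}\cap A'$ with $r^{*}e_{\eta}r=(a-\varepsilon-\varepsilon')_{+}$; put $s_{\varepsilon,\eta}:=e_{\eta}^{1/2}r\in B^{\omega}\cap A'$, so $s_{\varepsilon,\eta}^{*}s_{\varepsilon,\eta}=(a-\varepsilon-\varepsilon')_{+}$. Choosing a slightly larger cut-off $\tilde{f}_{\eta}$, equal to $1$ on $\mathrm{supp}\,f_{\eta}$ and supported in $[1-3\eta,1]$, one gets $\tilde{f}_{\eta}(b)s_{\varepsilon,\eta}=s_{\varepsilon,\eta}$, hence $\|bs_{\varepsilon,\eta}-s_{\varepsilon,\eta}\|=\|(b-1)\tilde{f}_{\eta}(b)s_{\varepsilon,\eta}\|\leq 3\eta$, with no dependence on $\|r\|$. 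Taking $\eta$ and $\varepsilon'$ small relative to $\varepsilon$, I obtain for every $\varepsilon>0$ an $s_{\varepsilon}\in B^{\omega}\cap A'$ with $\|s_{\varepsilon}^{*}s_{\varepsilon}-a\|\leq 2\varepsilon$ and $\|bs_{\varepsilon}-s_{\varepsilon}\|\leq\varepsilon$.

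Finally, passing to representing sequences and applying a reindexation argument (Kirchberg's $\varepsilon$-test, cf.\ \cite{Kir2}) to the countably many conditions ``$[\,\cdot\,,x]=0$ for $x$ ranging over a countable dense subset of $A$'', ``$(\,\cdot\,)^{*}(\,\cdot\,)=a$'' and ``$b\,(\,\cdot\,)=(\,\cdot\,)$'' upgrades these approximate solutions to a single $s\in B^{\omega}\cap A'$ with $s^{*}s=a$ and $bs=s$, as required. The step I expect to be the main obstacle is the comparison: strict comparison is assumed only of $B$, and turning it into a Cuntz-subequivalence statement valid inside the relative commutant $B^{\omega}\cap A'$ is the crux, which is exactly the role of nuclearity of $A$, of the surjection onto the finite factor $N$ (Proposition \ref{pro:factor}), and of \cite[Lemma 5.7]{Na2}. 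Once that is in hand, what remains is the routine functional-calculus bookkeeping and reindexation of \cite{MS}, \cite{MS2}, \cite{MS3}.
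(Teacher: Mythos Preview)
Your proposal is correct and matches the paper's own argument, which does not spell out the details but simply records that the result follows by running the proofs of \cite[Lemma 3.2]{MS3} and \cite[Theorem 1.1]{MS} with \cite[Lemma 5.7]{Na2} as the comparison input (and refers to \cite[Lemma 3.6]{Na} for a closely related argument). The structure you outline---passing to the finite factor $N$ via Proposition~\ref{pro:factor}, the comparison step supplied by \cite[Lemma 5.7]{Na2}, the functional-calculus cut-offs of $b$, and a reindexation/$\varepsilon$-test to upgrade approximate solutions to exact ones---is precisely what those references carry out in the relative setting.
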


By Proposition \ref{pro:trace-induce}, $\varrho$ induces a surjective homomorphism from  
$F(A, B)$ to $M\cap \varrho(A)^{\prime}$. 
We denote it by the same symbol $\varrho$ for simplicity. 
Using Proposition \ref{pro:factor} and Proposition \ref{pro:si}, essentially the same proofs as 
\cite[Proposition 3.3]{MS3} and \cite[Proposition 4.8]{MS2} show the following proposition. 
See also the proof of \cite[Proposition 3.8]{Na}. 

\begin{pro}\label{pro:central-strict-comparison}
Let $B$ be a monotracial C$^*$-algebra, and let $A$ be a simple 
separable non-type I nuclear monotracial C$^*$-subalgebra of $B^{\omega}$. 
Assume that $\tau_{B}$ is faithful, $\tau_{B, \omega}|_A$ is a state and $B$ has strict comparison. Then 
$F(A, B)$ is monotracial and has strict comparison. Furthermore, if $a$ and $b$ are positive elements 
in $F(A,B)$ satisfying $d_{\tau_{B,\omega}}(a)< d_{\tau_{B, \omega}}(b)$, then there exists an element 
$r$ in $F(A, B)$ such that $r^*br=a$. 
\end{pro}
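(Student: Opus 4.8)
The plan is to bootstrap from the von Neumann algebraic picture established in Propositions \ref{pro:factor} and \ref{pro:si}, using the surjection $\varrho\colon F(A,B)\to M\cap\varrho(A)^{\prime}$ together with property (SI) relative to $A$, mimicking \cite[Proposition 3.3]{MS3} and \cite[Proposition 4.8]{MS2}. First I would record that $F(A,B)$ is monotracial: any tracial state on $F(A,B)$ pulls back to a tracial state on $B^{\omega}\cap A^{\prime}$ vanishing on $\mathrm{Ann}(A,B^{\omega})$, and one checks via Proposition \ref{pro:factor} (that $M\cap\varrho(A)^{\prime}$ is a finite factor, hence has a unique tracial state $\tilde\tau_{B,\omega}$) and a maximality/averaging argument that any such trace must agree with $\tau_{B,\omega}$. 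The point is that $\ker\varrho$ in $F(A,B)$ is a trace-kernel-type ideal, so traces do not see it.

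Next I would prove the comparison statement. Given positive contractions $a,b\in F(A,B)$ with $d_{\tau_{B,\omega}}(a)<d_{\tau_{B,\omega}}(b)$, lift them to positive contractions (still called $a,b$) in $B^{\omega}\cap A^{\prime}$. The dimension functions are computed via $\varrho$ in $M\cap\varrho(A)^{\prime}$, which is a finite factor with unique trace, so there the comparison is automatic: one finds $r_0\in M\cap\varrho(A)^{\prime}$ with $r_0^*\varrho(b)r_0=\varrho(a)$ (using that $\varrho(a)$ is subequivalent to $\varrho(b)$ in the II$_1$ factor, via the standard functional-calculus trick of passing to $(b-\varepsilon)_+$). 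Pulling $r_0$ back along the surjection $\varrho$ (restricted to $F(A,B)$) gives $r_1\in F(A,B)$ with $r_1^*br_1 - a \in\ker\varrho$, i.e. $\tau_{B,\omega}((r_1^*br_1-a)^*(r_1^*br_1-a))$-type quantities vanish; more precisely the defect $e:=(a-r_1^*br_1)_+$ and similar have trace zero. Then I invoke property (SI) relative to $A$ (Proposition \ref{pro:si}): a positive contraction of trace $0$ that sits under an element of strictly positive "infimum-of-powers" trace can be absorbed by an isometry-type element $s$ with $s^*s = e$, $bs = s$ (after arranging the hypotheses by a cutting argument, replacing $b$ with $(b-\varepsilon)_+$ and using $d_{\tau_{B,\omega}}(a)<d_{\tau_{B,\omega}}(b)$ to leave room). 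Combining $r_1$ with the correction coming from $s$ yields $r\in F(A,B)$ with $r^*br=a$ exactly. Strict comparison of $F(A,B)$ (for positive elements in matrix amplifications) then follows by the same scheme applied to $M_k(F(A,B))\cong F(A, M_k(B))$ or by noting $M_k(B)$ inherits strict comparison and rerunning the argument, together with the Cuntz-subequivalence reformulation $r^*br=a\Rightarrow a\precsim b$.

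The main obstacle I expect is the delicate interplay between the von Neumann algebra correction term $r_1$ (which only solves the equation "up to trace zero") and the property (SI) correction $s$: one has to set up the right positive elements so that the SI hypotheses $\tau_{B,\omega}(\cdot)=0$ and $\inf_m\tau_{B,\omega}(b^m)>0$ genuinely hold, which requires carefully cutting down $b$ by a small $\varepsilon$ and exploiting the \emph{strict} inequality $d_{\tau_{B,\omega}}(a)<d_{\tau_{B,\omega}}(b)$ — this is exactly the technical heart of \cite[Theorem 1.1]{MS} and \cite[Proposition 3.3]{MS3}, and the only new wrinkle here is that everything must be done in the \emph{relative} central sequence algebra $F(A,B)$ rather than $F(B)$, so one must keep track of the annihilator ideal and verify that the surjectivity of $\varrho$ on $B^{\omega}\cap A^{\prime}$ (via \cite[Theorem 3.1]{MS3}) descends correctly to $F(A,B)$. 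Once the relative versions of the cited Matui–Sato lemmas are in hand, assembling the conclusion is routine, so in the writeup I would simply point to those references and indicate the substitutions, as the paper already does for Propositions \ref{pro:si} and \ref{pro:factor}.
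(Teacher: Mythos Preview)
Your proposal is correct and follows essentially the same approach as the paper: the paper does not give a detailed proof but simply notes that, using Proposition~\ref{pro:factor} and Proposition~\ref{pro:si}, the arguments of \cite[Proposition~3.3]{MS3} and \cite[Proposition~4.8]{MS2} (see also \cite[Proposition~3.8]{Na}) go through verbatim in the relative setting. Your sketch accurately identifies these ingredients---monotraciality via the finite factor $M\cap\varrho(A)'$, comparison in the II$_1$ factor, and the property~(SI) correction---and your concluding remark that one would ``point to those references and indicate the substitutions'' is exactly what the paper does.
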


\section{Property W}\label{sec:property-w}

In this section we shall introduce the property W, which is a key property 
in Section \ref{sec:uniqueness}. 

\begin{Def}
Let $D$ be  a simple separable nuclear monotracial C$^*$-algebra. We say that 
$D$ has \textit{property W} if $F(D)$ satisfies the following properties: 
\ \\
(i) for any $\theta\in [0,1]$, there exists a projection $p$ in $F(D)$ such that 
$\tau_{D, \omega}(p)=\theta$, \ \\
(ii) if $p$ and $q$ are projections in $F(D)$ such that $0<\tau_{D, \omega}(p)=\tau_{D, \omega}(q)$, 
then  $p$ is Murray-von Neumann equivalent to $q$.
\end{Def}

By arguments in \cite{Na3} and \cite{Na}, we see that if $\mathcal{D}$ is a simple separable nuclear 
monotracial $M_{2^{\infty}}$-stable C$^*$-algebra which is $KK$-equivalent to $\{0\}$, then 
$\mathcal{D}$ has property W. 
We shall give a sketch of a proof for reader's convenience and show a slight generalization 
(or a relative version). 

In this section, we assume that $\mathcal{D}$ is a simple separable nuclear monotracial 
$M_{2^{\infty}}$-stable C$^*$-algebra which is $KK$-equivalent to $\{0\}$ and  
$B$ is a simple monotracial C$^*$-algebra with strict comparison and 
$B\subseteq \overline{\mathrm{GL}(B^{\sim})}$. Let $\Phi$ be a homomorphism from 
$\mathcal{D}$ to $B^{\omega}$ such that $\tau_{\mathcal{D}}=\tau_{B, \omega}\circ \Phi$. 
By the Choi-Effros lifting theorem, there exists a 
sequence $\{\Phi_n\}_{n\in\mathbb{N}}$ of contractive c.p. maps from $\mathcal{D}$ to $B$ such that 
$\Phi (x) =(\Phi_n(x))_n$ for any $x\in\mathcal{D}$. 
Since we assume $\tau_{\mathcal{D}}=\tau_{B, \omega}\circ \Phi$, 
$\tau_{B, \omega}|_{\Phi(\mathcal{D})}$ is a state. 
Hence $\tau_{B, \omega}$ is the unique tracial state on $F(\Phi (\mathcal{D}), B)$ by 
Proposition \ref{pro:central-strict-comparison}. 
The following proposition is an analogous proposition of  
\cite[Proposition 4.2]{Na3} and \cite[Proposition 2.6]{Na}. 

\begin{pro}\label{pro:property-w-ii}
(i) For any $N\in\mathbb{N}$, there exists a unital homomorphism from 
$M_{2^{N}}(\mathbb{C})$ to $F(\Phi (\mathcal{D}), B)$. \ \\
(ii) For any $\theta\in [0,1]$, there exists a projection $p$ in $F(\Phi (\mathcal{D}), B)$ 
such that $\tau_{B, \omega}(p)=\theta$. \ \\
(iii) Let $h$ be a positive element in $F(\Phi (\mathcal{D}), B)$ such that 
$d_{\tau_{B, \omega}}(h)>0$. For any $\theta \in [0, d_{\tau_{B, \omega}}(h))$, 
there exists a non-zero projection $p$ in 
$\overline{hF(\Phi (\mathcal{D}), B)h}$ such that $\tau_{B, \omega}(p)=\theta$. 
\end{pro}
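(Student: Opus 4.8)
The plan is to build everything from the $M_{2^\infty}$-stability of $\mathcal{D}$ together with the relative strict comparison established in Proposition \ref{pro:central-strict-comparison}, following the pattern of \cite[Proposition 4.2]{Na3} and \cite[Proposition 2.6]{Na}.

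For (i), I would exploit $M_{2^\infty}$-stability: fix an isomorphism $\mathcal{D}\cong \mathcal{D}\otimes M_{2^\infty}$, so there is a unital homomorphism $M_{2^N}(\mathbb{C})\to F(\mathcal{D})$ coming from the tensor-product decomposition (a standard central-sequence fact: a unital endomorphism-type embedding of $M_{2^N}$ into $F(\mathcal{D})$ arising from absorbing another copy of $M_{2^\infty}$). Then I would push this forward through $\Phi$. The point is that $\Phi$ induces a homomorphism $F(\mathcal{D})\to F(\Phi(\mathcal{D}),B)$: indeed, lifting $\Phi=(\Phi_n)_n$ via Choi--Effros, an element of $F(\mathcal{D})$ represented by a bounded sequence $(d_n)_n$ asymptotically central in $\mathcal{D}$ maps to $(\Phi_n(d_n))_n$, which one checks lies in $B^\omega\cap\Phi(\mathcal{D})'$ modulo $\mathrm{Ann}$, using that $\Phi_n$ are asymptotically multiplicative on $\mathcal{D}$ (nuclearity of $\mathcal{D}$ lets one arrange approximate multiplicativity, or one uses that $\Phi$ itself is a genuine homomorphism so the commutators vanish in the limit). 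Unitality of the target is guaranteed by $\tau_{B,\omega}|_{\Phi(\mathcal{D})}$ being a state, as noted before the proposition. Composing gives the unital copy of $M_{2^N}(\mathbb{C})$ in $F(\Phi(\mathcal{D}),B)$.

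For (ii), given $\theta\in[0,1]$, first write $\theta$ in binary and use the matrix units from (i) to produce, for each $N$, a projection $p_N\in F(\Phi(\mathcal{D}),B)$ whose trace is a dyadic rational $\theta_N$ with $|\theta-\theta_N|<2^{-N}$; concretely $p_N$ is a sum of diagonal matrix units in a unital copy of $M_{2^N}(\mathbb{C})$. To get exactly $\theta$ I would pass to a limit: either realize all the $p_N$ inside a single unital copy of $M_{2^\infty}$ in $F(\Phi(\mathcal{D}),B)$ (by reapplying (i) with a fixed compatible system of embeddings) and take $p=\lim p_N$ in that AF algebra — the limit exists because the traces converge and in a monotracial AF algebra with the comparison from Proposition \ref{pro:central-strict-comparison} one can arrange the $p_N$ increasing or Cauchy — or, more robustly, invoke that $F(\Phi(\mathcal{D}),B)$ is monotracial with strict comparison (Proposition \ref{pro:central-strict-comparison}) and has real rank zero-type behaviour on the hereditary subalgebras spanned by these matrix units, so the supremum $\theta$ is attained by an honest projection. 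The cleanest route is to fix an inductive system $M_{2}\hookrightarrow M_{4}\hookrightarrow\cdots$ of unital embeddings inside $F(\Phi(\mathcal{D}),B)$ (possible by (i) and a standard reindexing), take $p$ to be the projection in the resulting copy of $M_{2^\infty}$ corresponding to a cut at $\theta$ in the obvious way.

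For (iii), I would apply (ii) inside the hereditary subalgebra $\overline{h F(\Phi(\mathcal{D}),B) h}$. Since $d_{\tau_{B,\omega}}(h)>0$ and $F(\Phi(\mathcal{D}),B)$ has strict comparison with the stronger conclusion of Proposition \ref{pro:central-strict-comparison} (for $a,b$ with $d_{\tau_{B,\omega}}(a)<d_{\tau_{B,\omega}}(b)$ there is $r$ with $r^*br=a$), I first find, for $\theta<d_{\tau_{B,\omega}}(h)$, a positive element $a$ with $d_{\tau_{B,\omega}}(a)$ strictly between $\theta$ and $d_{\tau_{B,\omega}}(h)$, hence $a\precsim h$ witnessed by $r^*hr=a$, which embeds $\overline{a F(\cdot)a}$ into $\overline{h F(\cdot)h}$. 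Then within a corner isomorphic to a copy of $F(\Phi(\mathcal{D}),B)$-like algebra carrying a unital $M_{2^\infty}$ (obtained by tensoring as in (i), or by noting the corner is again such a central-sequence algebra), produce a projection of normalized trace $\theta/d_{\tau_{B,\omega}}(h)$ and rescale; comparison upgrades Cuntz subequivalence to an actual subprojection of the correct trace. I expect the main obstacle to be the genuinely $C^*$ (as opposed to von Neumann) step of turning the dyadic approximating projections into one projection of trace \emph{exactly} $\theta$ — this is where $M_{2^\infty}$-stability is essential rather than mere $\mathcal{Z}$-stability, and where one must be careful that the inductive limit of matrix subalgebras inside $F(\Phi(\mathcal{D}),B)$ is taken coherently so that the cut projections live in a single AF subalgebra; the rest is a routine transcription of the Matui--Sato/Nawata machinery already cited.
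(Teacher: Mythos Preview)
Your argument for (i) is essentially the paper's: produce matrix units in $F(\mathcal{D})$ from the tensor decomposition $\mathcal{D}\cong\mathcal{D}\otimes M_{2^\infty}$ and transport them through $\Phi$ by a diagonal/reindexing argument. That part is fine.

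The gap is in (ii). All three of your proposed routes try to assemble a projection of trace exactly $\theta$ \emph{inside} $F(\Phi(\mathcal{D}),B)$ from the dyadic ones, and none of them can work as stated. A unital copy of $M_{2^\infty}$ in any C$^*$-algebra contains only projections of dyadic trace; an increasing sequence of projections with traces $\theta_N\nearrow\theta$ is never Cauchy in norm (distinct comparable projections are at norm distance $1$), so ``$\lim p_N$ in that AF algebra'' does not exist when $\theta$ is not dyadic. Strict comparison and real-rank-zero heuristics do not conjure projections of prescribed non-dyadic trace either. You correctly flag this step as the obstacle, but the resolution you sketch does not overcome it.

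The paper sidesteps the problem by taking the limit \emph{before} pushing forward, exploiting the ultrapower structure of $F(\mathcal{D})$ rather than working in $F(\Phi(\mathcal{D}),B)$. Concretely: write $\mathcal{D}\cong\mathcal{D}\otimes\bigotimes_{m}M_{2^\infty}$, choose for each $m$ a projection $p_m$ in the $m$-th tensor factor with dyadic trace $\theta_m\to\theta$; then $(p_m)_m\in\mathcal{D}^\omega\cap\mathcal{D}'$ already defines a projection in $F(\mathcal{D})$ of trace $\lim_{m\to\omega}\theta_m=\theta$. Now apply the same $\Phi_n$-diagonal argument you used in (i) to obtain a projection of trace $\theta$ in $F(\Phi(\mathcal{D}),B)$ (trace is preserved because $\tau_{B,\omega}\circ\Phi=\tau_{\mathcal{D}}$). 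In other words, the induced map $F(\mathcal{D})\to F(\Phi(\mathcal{D}),B)$ that you identified in (i) is exactly what carries the non-dyadic projection across; you just need to realize that $F(\mathcal{D})$ already has such projections, rather than trying to build them after the fact.

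Your outline for (iii) is along the right lines once (ii) is repaired: combine (ii) with the strong form of strict comparison in Proposition~\ref{pro:central-strict-comparison} (existence of $r$ with $r^*hr=a$) to place the projection inside $\overline{hF(\Phi(\mathcal{D}),B)h}$, exactly as in \cite[Proposition~4.2]{Na3}.
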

\begin{proof}

(i) Since $\mathcal{D}$ is isomorphic to $\mathcal{D}\otimes M_{2^{\infty}}= 
\mathcal{D}\otimes \bigotimes_{n\in\mathbb{N}} M_{2^N}(\mathbb{C})$,  a similar argument as in the 
proof of \cite[Proposition 4.2]{Na3} shows that there exists a family $\{(e_{ij, m})_m\}_{i,j=1}^{2^N}$ 
of contractions in $\mathcal{D}^{\omega}\cap\mathcal{D}^{\prime}$ such that 
$(\sum_{\ell=1}^{2^{N}}e_{\ell\ell, m}x)_m=x$ and $(e_{ij, m}e_{kl, m}x)_m= (\delta_{jk}e_{il, m}x)_m$ 
for any $1\leq i,j,k,l\leq 2^{N}$ and $x\in \mathcal{D}$. 
Note that we have 
$$
\lim_{m\to\omega} \| ([\Phi_n (e_{ij, m}), \Phi_n(x)])_n \| =0, \quad 
\lim_{m\to \omega} \| (\sum_{\ell=1}^{2^N}\Phi_n (e_{\ell\ell, m})\Phi_n(x)-\Phi_n(x))_n\|=0
$$
and
$$ 
\lim_{m\to\omega} \| ((\Phi_n (e_{ij, m})\Phi_n(e_{kl, m})-
\delta_{jk}\Phi_n(e_{il, m}))\Phi_n(x))_n\|=0
$$
for any $1\leq i,j,k,l\leq 2^{N}$ and  $x\in\mathcal{D}$. 
Hence, for any finite subset $F\subset \mathcal{D}$ and $\varepsilon>0$, there exists 
a family of $\{(\Phi_n(e_{ij, (F, \varepsilon)}))_n\}_{i,j=1}^{2^N}$ of contractions in 
$B^{\omega}$ such that 
$$
\lim_{n\to\omega} \| [\Phi_n (e_{ij, (F, \varepsilon)}), \Phi_n(x)] \| < \varepsilon , \quad
\lim_{n\to \omega} \| \sum_{\ell=1}^{2^N}\Phi_n (e_{\ell\ell, (F,\varepsilon)})
\Phi_n(x)-\Phi_n(x)\|< \varepsilon
$$
and
$$ 
\lim_{n\to\omega} \| (\Phi_n (e_{ij, (F, \varepsilon)})\Phi_n(e_{kl, (F, \varepsilon)})-
\delta_{jk}\Phi_n(e_{il, (F, \varepsilon)}))\Phi_n(x)\| < \varepsilon
$$
for any $1\leq i,j,k,l\leq 2^{N}$ and  $x\in F$. 
Let $\{F_{m}\}_{m\in\mathbb{N}}$ be an increasing sequence of finite subsets in $\mathcal{D}$ 
such that $\mathcal{D}=\overline{\bigcup_{m\in\mathbb{N}} F_{m}}$. 
We can find a sequence $\{X_m\}_{m\in\mathbb{N}}$ of elements in $\omega$ such that 
$X_{m+1}\subset X_{m}$ and for any $n\in X_{m}$, 
$$
\| [\Phi_n (e_{ij, (F_{m}, \frac{1}{m})}), \Phi_n(x)] \| < \frac{1}{m}, \quad 
\| \sum_{\ell=1}^{2^N}\Phi_n (e_{\ell\ell, (F_{m},\frac{1}{m})})
\Phi_n(x)-\Phi_n(x)\|< \frac{1}{m}
$$
and
$$ 
\| (\Phi_n (e_{ij, (F_{m}, \frac{1}{m})})\Phi_n(e_{kl, (F_{m}, \frac{1}{m})})-
\delta_{jk}\Phi_n(e_{il, (F_{m}, \frac{1}{m})}))\Phi_n(x)\| < \frac{1}{m}
$$
for any $1\leq i,j,k,l\leq 2^{N}$ and  $x\in F_{m}$.
For any $1\leq i,j\leq 2^{N}$, put 
$$
E_{ij, n} := \left\{\begin{array}{cl}
0 & \text{if } n\notin X_1   \\
\Phi_n (e_{ij, (F_{m}, \frac{1}{m})}) & \text{if } n\in X_m\setminus X_{m+1}\quad (m\in\mathbb{N})
\end{array}
\right..
$$
Then we have $(E_{ij, n})_n\in B^{\omega}\cap \Phi (\mathcal{D})^{\prime}$, 
$$
\sum_{\ell=1}^{2^N}[(E_{\ell\ell, n})_n]=1 \quad \text{and} \quad 
[(E_{ij, n})_n][(E_{kl, n})_n]=\delta_{jk}[(E_{il, n})_n]
$$
in $F(\Phi (\mathcal{D}), B)$ for any $1\leq i,j,k,l \leq 2^{N}$. 
Therefore there exists a unital homomorphism from $M_{2^{N}}(\mathbb{C})$ to 
$F(\Phi (\mathcal{D}), B)$.

(ii) Since $\mathcal{D}$ is isomorphic to $\mathcal{D}\otimes M_{2^{\infty}}= 
\mathcal{D}\otimes \bigotimes_{n\in\mathbb{N}} M_{2^{\infty}}$, a similar argument 
as in the proof of \cite[Proposition 4.2]{Na3} shows that there exists a positive contraction 
$(p_m)_m$ in $\mathcal{D}^{\omega}\cap \mathcal{D}$ such that $((p_m^2-p_m)x)_m=0$ for any 
$x\in\mathcal{D}$ and $\tau_{\mathcal{D}, \omega}((p_m)_m)=\theta$. 
By a similar argument as above, we obtain a projection $p$ in $F(\Phi (\mathcal{D}), B)$ 
such that $\tau_{B, \omega}(p)=\theta$. 

(iii) Using Proposition \ref{pro:central-strict-comparison} 
instead of \cite[Proposition 4.1]{Na3}, we obtain the conclusion by the same argument as in the 
proof of \cite[Proposition 4.2]{Na3}.
\end{proof}

The proposition above and the same arguments as in \cite[Section 4]{Na3} show the following 
corollary. 

\begin{cor}\label{cor:MvN-u} (cf. \cite[Proposition 4.8]{Na3}).
Let $p$ and $q$ be projections in $F(\Phi (\mathcal{D}), B)$ such that $\tau_{B, \omega} (p)<1$. 
Then $p$ and $q$ are Murray-von Neumann equivalent if and only if $p$ and $q$ are 
unitarily equivalent. 
\end{cor}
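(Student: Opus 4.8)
The plan is to follow the standard pattern for passing from Murray--von Neumann equivalence to unitary equivalence in a unital C$^*$-algebra, exploiting the additional room furnished by Proposition \ref{pro:property-w-ii}(i). The ``only if'' direction is the substantive one; the ``if'' direction is immediate since $u p u^* = q$ with $u$ unitary gives the partial isometry $v = u p$ witnessing $p \sim q$. So assume $p \sim q$ in $F(\Phi(\mathcal{D}), B)$ via a partial isometry $v$ with $v^*v = p$, $vv^* = q$, and $\tau_{B,\omega}(p) < 1$. The obstruction to unitarity is that $1 - p$ and $1 - q$ need not be equivalent a priori; the idea is to absorb both of them into a common ``reservoir'' subprojection on which we can rotate freely.

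First I would use Proposition \ref{pro:property-w-ii}(i) with, say, $N = 1$ (or larger) to obtain a unital copy of $M_2(\mathbb{C})$ in $F(\Phi(\mathcal{D}), B)$, i.e. a projection $e$ with $e \sim 1 - e$ and $\tau_{B,\omega}(e) = 1/2$. More usefully, combined with Proposition \ref{pro:property-w-ii}(iii) applied inside the corner $(1-p)F(\Phi(\mathcal{D}),B)(1-p)$, and again inside $(1-q)F(\Phi(\mathcal{D}),B)(1-q)$ — both of which are non-zero since $\tau_{B,\omega}(1-p) = 1 - \tau_{B,\omega}(p) > 0$ and likewise for $q$ — I can produce a projection $p' \le 1 - p$ with $0 < \tau_{B,\omega}(p') < \min\{\tau_{B,\omega}(1-p), \tau_{B,\omega}(1-q)\}$ and, symmetrically, a projection $q' \le 1 - q$ with the same trace value. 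Since $F(\Phi(\mathcal{D}), B)$ is monotracial with strict comparison (Proposition \ref{pro:central-strict-comparison}), and since $\tau_{B,\omega}(1-p-p') > 0$, one further application of \ref{pro:property-w-ii}(iii) lets me arrange $p' \le 1-p$ with $\tau_{B,\omega}(p') = \tau_{B,\omega}(q')$ while keeping $p + p' \ne 1$ and $q + q' \ne 1$, hence $\tau_{B,\omega}(1 - p - p') = \tau_{B,\omega}(1 - q - q') > 0$; then strict comparison upgrades the equalities of traces to actual Murray--von Neumann equivalences $p' \sim q'$ and $1 - p - p' \sim 1 - q - q'$ (strict comparison gives two-sided Cuntz domination, and for projections this is equivalence). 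Choosing the trace of $p'$ to also equal $\tau_{B,\omega}(p) = \tau_{B,\omega}(q)$ is harmless but optional; what matters is a clean partition of the unit on both sides into three mutually equivalent pieces: $1 = p + p' + r$ on one side, $1 = q + q' + r'$ on the other, with $p \sim q$, $p' \sim q'$, $r \sim r'$.

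Given such partitions, the unitary is built directly: let $v$, $w$, $w'$ be partial isometries implementing $p \sim q$, $p' \sim q'$, $r \sim r'$ respectively (so $v^*v = p$, $vv^* = q$, etc.), and set $u := v + w + w'$. Orthogonality of the ranges ($q, q', r'$ mutually orthogonal) and of the domains ($p, p', r$ mutually orthogonal) gives $u^*u = p + p' + r = 1$ and $uu^* = q + q' + r' = 1$, so $u$ is a unitary in $F(\Phi(\mathcal{D}), B)$ with $upu^* = vpv^* = vv^* = q$. This proves the ``only if'' direction. The main obstacle is the bookkeeping in the previous paragraph: one must be careful to keep the ``remainder'' projections $1 - p - p'$ and $1 - q - q'$ of strictly positive trace so that strict comparison applies to them (it gives equivalence only under a strict, or at least matched, trace inequality), and to check that \ref{pro:property-w-ii}(iii) can be invoked in the relevant corners, which it can precisely because those corners have strictly positive trace. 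With the partition in hand the rest is the routine ``$2\times 2$ (here $3\times 3$) matrix trick,'' exactly as in \cite[Section 4]{Na3}.
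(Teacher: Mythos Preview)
There is a genuine gap at the heart of your argument. You write that ``strict comparison upgrades the equalities of traces to actual Murray--von Neumann equivalences $p' \sim q'$ and $1 - p - p' \sim 1 - q - q'$,'' but Proposition~\ref{pro:central-strict-comparison} only yields $a\precsim b$ (indeed $r^*br=a$) when $d_{\tau_{B,\omega}}(a)<d_{\tau_{B,\omega}}(b)$ \emph{strictly}. For projections with \emph{equal} trace it gives nothing. Your parenthetical ``strict comparison gives two-sided Cuntz domination, and for projections this is equivalence'' is therefore unfounded on both counts: you do not get Cuntz domination in either direction from an equality, and even if you did, Cuntz equivalence of projections does not imply Murray--von Neumann equivalence without a cancellation property---which is exactly what the corollary amounts to (namely $p\sim q\Rightarrow 1-p\sim 1-q$). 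Note also that the statement ``equal trace $\Rightarrow$ Murray--von Neumann equivalent'' in $F(\Phi(\mathcal{D}),B)$ is precisely Theorem~\ref{thm:relative-property-w}, which is proved \emph{after} this corollary and relies on it, so you cannot invoke it here.

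The natural attempted repair---choose $p'\le 1-p$ with $\tau_{B,\omega}(p')<\tau_{B,\omega}(1-q)$ and then \emph{produce} $q'\le 1-q$ with $p'\sim q'$ via strict comparison, rather than choosing $q'$ independently---does give $p'\sim q'$, but the residual projections $1-p-p'$ and $1-q-q'$ again have equal trace and you are back where you started. Iterating only pushes the problem into ever smaller corners without terminating. The paper's route (the chain of lemmas in \cite[Section~4]{Na3}, fed by Proposition~\ref{pro:property-w-ii} and Proposition~\ref{pro:central-strict-comparison}) is not just the final ``add up three partial isometries'' step you describe; the substantive work lies in the intermediate results that manufacture the complementary equivalence $1-p\sim 1-q$, and your shortcut does not supply that.
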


Since we assume $B\subseteq \overline{\mathrm{GL}(B^{\sim})}$, we obtain the following proposition 
by the same argument as in the proof of \cite[Proposition 4.9]{Na3}. 

\begin{pro}\label{pro:lift-unitary}
Let $u$ be a unitary element in $F(\Phi(\mathcal{D}), B)$. 
Then there exists a unitary element $w$ in 
$(B^{\sim})^{\omega}\cap\Phi (\mathcal{D})^{\prime}$ such that $u= [w]$. 
\end{pro}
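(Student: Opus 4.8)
The plan is to follow the argument of \cite[Proposition 4.9]{Na3}, reducing the lifting of an arbitrary unitary in $F(\Phi(\mathcal{D}), B)$ to the lifting of one that is homotopic to the identity, where an exponential expansion can be applied. First I would observe that $F(\Phi(\mathcal{D}), B)$ is monotracial with strict comparison by Proposition \ref{pro:central-strict-comparison}, and that, since $\mathcal{D}$ is $M_{2^\infty}$-stable, the unitary group of $F(\Phi(\mathcal{D}), B)$ is well supplied with square roots: by Proposition \ref{pro:property-w-ii}(i) there is a unital copy of $M_{2^N}(\mathbb{C})$ inside $F(\Phi(\mathcal{D}), B)$ for every $N$, and Corollary \ref{cor:MvN-u} together with Proposition \ref{pro:property-w-ii}(ii)--(iii) gives the comparison of projections needed to decompose a given unitary $u$ (via its spectral projections, or via a $2\times 2$-matrix trick) as a product $u = v\,w_0$ where $v$ lifts to a unitary in $(B^\sim)^\omega \cap \Phi(\mathcal{D})'$ coming from a self-adjoint element with small spectral gaps and $w_0$ is connected to $1$ through a path in $U(F(\Phi(\mathcal{D}), B))$. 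Concretely, after the standard reduction it suffices to treat $u = \exp(ih)$ for a self-adjoint $h \in F(\Phi(\mathcal{D}), B)$.

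Next I would lift $h$ itself. Choose a self-adjoint lift $(h_n)_n \in (B^\sim)^\omega \cap \Phi(\mathcal{D})'$ of $h$ (self-adjoint lifts through the quotient $(B^\sim)^\omega \cap \Phi(\mathcal{D})' \to F(\Phi(\mathcal{D}), B)$ exist because the relevant annihilator ideal is generated by its self-adjoint part and one can symmetrize); then $w := \exp(i(h_n)_n)$ is a unitary in $(B^\sim)^\omega \cap \Phi(\mathcal{D})'$ with $[w] = \exp(ih) = u$ in $F(\Phi(\mathcal{D}), B)$, using continuity of the exponential and of the quotient map. The point where $B \subseteq \overline{\mathrm{GL}(B^\sim)}$ enters is in controlling the general case before the reduction to exponentials: one needs that a unitary in $F(\Phi(\mathcal{D}), B)$ can be perturbed so as to admit a lift whose components are honest unitaries (not merely contractions) in $(B^\sim)^\omega \cap \Phi(\mathcal{D})'$. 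This is supplied by Proposition \ref{pro:non-sep-gl}, which upgrades $B \subseteq \overline{\mathrm{GL}(B^\sim)}$ to $B^\omega \subseteq \overline{\mathrm{GL}((B^\omega)^\sim)}$, so that a near-unitary lift can be polar-decomposed into a genuine unitary in $(B^\omega)^\sim$ times a positive element, after which a small perturbation keeps the unitary part inside the commutant $\Phi(\mathcal{D})'$ and the annihilator-quotient still sends it to $u$.

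The main obstacle, and the part deserving care, is the commutation bookkeeping: all of the auxiliary unitaries, matrix units, and exponentials produced above must be arranged to (approximately, and then exactly after passing to $F(\Phi(\mathcal{D}), B)$) commute with $\Phi(\mathcal{D})$ and to annihilate $\Phi(\mathcal{D})$-orthogonally in the right way, and one must check that the polar-decomposition step of Proposition \ref{pro:non-sep-gl} can be carried out simultaneously with staying in the relative commutant. This is handled exactly as in \cite{Na3}, via a reindexing argument over an increasing sequence of finite subsets $F_m \subset \mathcal{D}$ and a decreasing sequence of elements $X_m \in \omega$, choosing the lifts $h_n$ to be $\varepsilon$-central on $F_m$ for $n \in X_m$; the quotient by $\mathrm{Ann}(\Phi(\mathcal{D}), B^\omega)$ then erases the errors. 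So the proof is, in outline: (1) reduce to $u = \exp(ih)$ using Proposition \ref{pro:property-w-ii} and Corollary \ref{cor:MvN-u}; (2) take a self-adjoint reindexed lift of $h$ in the relative commutant; (3) exponentiate to get $w \in (B^\sim)^\omega \cap \Phi(\mathcal{D})'$ with $[w] = u$; (4) use Proposition \ref{pro:non-sep-gl} (hence the hypothesis $B \subseteq \overline{\mathrm{GL}(B^\sim)}$) to handle the perturbation step in the reduction of (1). I expect (4), interleaved with the centrality constraint, to be the delicate point; everything else is routine once the framework of \cite[Proposition 4.9]{Na3} is in place.
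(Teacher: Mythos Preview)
Your outline takes a substantial detour that the paper (following \cite[Proposition 4.9]{Na3}) does not. The argument there is precisely the polar decomposition you bury in step (4), applied directly with no prior reduction: there is no factorization $u = v\,w_0$, no passage through the connected component of the unitary group, and no exponentials.

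Concretely: lift $u$ to a contraction $(x_n)_n \in B^\omega \cap \Phi(\mathcal{D})'$. Since $B \subseteq \overline{\mathrm{GL}(B^\sim)}$, the construction in the proof of Proposition~\ref{pro:non-sep-gl} yields a unitary $(w_n)_n \in (B^\sim)^\omega$ with $(x_n)_n = (w_n)_n(|x_n|)_n$. From $u^*u = uu^* = 1$ in $F(\Phi(\mathcal{D}), B)$ one has $(x_n^*x_n)_n\, a = a = (x_n x_n^*)_n\, a$ for every $a \in \Phi(\mathcal{D})$; the operator inequality $(1-h)^2 \leq 1-h^2$ for a positive contraction $h$ then forces $(|x_n|)_n\, a = a$ and $(|x_n^*|)_n\, a = a$. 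Since $(|x_n|)_n$ commutes with $\Phi(\mathcal{D})$ (functional calculus applied to $(x_n^*x_n)_n$) and $(|x_n^*|)_n = (w_n)_n(|x_n|)_n(w_n^*)_n$, a two--line computation shows $(w_n)_n \in \Phi(\mathcal{D})'$, and then $[(w_n)_n] = [(w_n)_n]\,[(|x_n|)_n] = [(x_n)_n] = u$. No perturbation is needed to place $(w_n)_n$ in the commutant; it lands there algebraically. (Compare the identical manoeuvre in the proof of Theorem~\ref{thm:uniqueness-i}.)

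Your step (1), by contrast, is a genuine gap. The claimed factorization $u = v\,w_0$ with $w_0$ connected to $1$ and $v$ already known to lift is never justified: having unital copies of $M_{2^N}(\mathbb{C})$ in $F(\Phi(\mathcal{D}),B)$ and comparison of projections does not by itself produce such a decomposition of an arbitrary unitary, and ``via its spectral projections, or via a $2\times 2$-matrix trick'' is not an argument. Steps (2)--(3) are fine for exponentials, but since the reduction to that case is not carried out, they do not cover the general $u$. The mechanism you describe in (4) --- polar decomposition via $B \subseteq \overline{\mathrm{GL}(B^\sim)}$ --- is not a technical perturbation supporting some other reduction; it is the entire proof.
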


There exists a homomorphism $\rho$ from $F(\Phi (\mathcal{D}), B)\otimes \mathcal{D}$ to 
$B^{\omega}$ such that 
$$
\rho ([(x_n)_n]\otimes a)=(x_n\Phi_n (a))_n
$$ 
for any $[(x_n)_n]\in F(\Phi(\mathcal{D}), B)$ and $a\in\mathcal{D}$. 
For a projection $p$ in $F(\Phi(\mathcal{D}), B)$, 
put $B_{p}^{\omega}:= \overline{\rho (p\otimes s)B^{\omega}\rho (p\otimes s)}$ where 
$s$ is a strictly positive element in $\mathcal{D}$. 
Define a homomorphism $\sigma_p$ from $\mathcal{D}$ to $B_{p}^{\omega}$ by 
$\sigma_{p}(a):=\rho (p\otimes a)$ for any $a\in\mathcal{D}$. 
Since $B$ has strict comparison, 
we see that if $p$ is a projection in $F(\Phi(\mathcal{D}), B)$ such that $\tau_{B, \omega}(p)>0$, 
then $\sigma_p$ is $(L,N)$-full for some maps $L$ and $N$ by 
\cite[Lemma 3.5 and Proposition 3.7]{Na3}. 
(We refer the reader to \cite[Section 3]{Na3} for details of the $(L,N)$-fullness.) 
Therefore \cite[Proposition 3.3]{Na3} implies the following theorem. 
We may regard this theorem as a variant of Elliott-Gong-Lin-Niu's stable uniqueness 
theorem \cite[Corollary 3.15]{EGLN}(see also \cite[Corollary 8.16]{EN}). 
Note that \cite[Proposition 3.3]{Na3} is also based on the results 
in \cite{EllK}, \cite{G}, \cite{DE1} and \cite{DE2}. 

\begin{thm}\label{thm:stable-uniqueness}
Let $\Omega$ be a compact metrizable space. For any finite subsets 
$F_1\subset C(\Omega)$, $F_2\subset \mathcal{D}$ and $\varepsilon>0$, 
there exist finite subsets $G_1\subset C(\Omega)$, $G_2\subset \mathcal{D}$, 
$m\in\mathbb{N}$  and $\delta >0$ such that the following holds. 
Let $p$ be a projection in $F(\Phi(\mathcal{D}), B)$ such that 
$\tau_{B, \omega} (p)>0$. 
For any contractive ($G_1\odot G_2, \delta$)-multiplicative maps 
$\psi_1, \psi_2 : C(\Omega)\otimes \mathcal{D}\to B_p^{\omega}$, 
there exist a unitary element $u$ in $M_{m^2+1}(B_p^{\omega})^{\sim}$ and 
$z_1,z_2,...,z_m\in\Omega$ such that 
\begin{align*}
\| u & (\psi_1(f\otimes b) \oplus  \overbrace{\bigoplus_{k=1}^m f(z_k)\rho (p\otimes b)\oplus \cdots \oplus\bigoplus_{k=1}^m f(z_k)\rho (p\otimes b) }^m) u^* \\
& - \psi_2(f\otimes b)\oplus \overbrace{\bigoplus_{k=1}^m f(z_k)\rho (p\otimes b) \oplus \cdots \oplus \bigoplus_{k=1}^m f(z_k)\rho (p\otimes b)}^m\| < \varepsilon 
\end{align*}
for any $f\in F_1$ and $b\in F_2$. 
\end{thm}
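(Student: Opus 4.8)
The plan is to deduce Theorem~\ref{thm:stable-uniqueness} directly from Elliott--Gong--Lin--Niu's stable uniqueness theorem as it appears in \cite[Proposition 3.3]{Na3}, by verifying that the hypotheses of that proposition hold for the maps $\sigma_p$. The input we need is that each $\sigma_p\colon\mathcal{D}\to B_p^{\omega}$ (for $p$ a projection in $F(\Phi(\mathcal{D}),B)$ with $\tau_{B,\omega}(p)>0$) is $(L,N)$-full for some pair of maps $L,N$ not depending on $p$; this is supplied by \cite[Lemma 3.5 and Proposition 3.7]{Na3}, whose hypotheses are met because $B$ has strict comparison, $B_p^{\omega}$ inherits the relevant comparison from $B^{\omega}$, and $\tau_{B,\omega}(p)>0$ guarantees $\sigma_p$ is non-degenerate in the trace. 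First I would make precise the ambient C$^*$-algebra $B_p^{\omega}=\overline{\rho(p\otimes s)B^{\omega}\rho(p\otimes s)}$ and record that $\rho(p\otimes b)$ for $b\in\mathcal{D}$ lies in $B_p^{\omega}$, so that the target of $\sigma_p$ and the ``diagonal'' terms $f(z_k)\rho(p\otimes b)$ all make sense inside $M_{m^2+1}(B_p^{\omega})^{\sim}$.

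Next I would organize the quantifiers to match \cite[Proposition 3.3]{Na3}. Given $F_1\subset C(\Omega)$, $F_2\subset\mathcal{D}$ and $\varepsilon>0$, one applies that proposition to the C$^*$-algebra $C(\Omega)\otimes\mathcal{D}$ together with the fixed fullness data $(L,N)$, obtaining finite sets $G_1\subset C(\Omega)$, $G_2\subset\mathcal{D}$, an integer $m$, and $\delta>0$ that are uniform over all $(L,N)$-full homomorphisms of the form $\iota\colon C(\Omega)\otimes\mathcal{D}\to B_p^{\omega}$ given by $f\otimes b\mapsto$ (some point-evaluation flavored map composed with $\sigma_p$). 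The role of the fixed homomorphism in the stable uniqueness theorem is played by $b\mapsto\rho(p\otimes b)$ tensored with evaluation on $\Omega$; the key point is that because $\sigma_p$ is $(L,N)$-full with $L,N$ independent of $p$, the output data $(G_1,G_2,m,\delta)$ can also be chosen independently of $p$. Then any two $(G_1\odot G_2,\delta)$-multiplicative contractive maps $\psi_1,\psi_2$ are, after the standard perturbation-to-$\ast$-homomorphism step inside the ultrapower $B_p^{\omega}$ (using that $\omega$ is a free ultrafilter, so a sequence of approximate multiplicativity estimates can be realized by an honest homomorphism on a separable subalgebra), brought into the scope of the conclusion of \cite[Proposition 3.3]{Na3}, which produces exactly the unitary $u\in M_{m^2+1}(B_p^{\omega})^{\sim}$ and the points $z_1,\dots,z_m\in\Omega$ with the stated norm estimate.

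The main obstacle, and the step deserving the most care, is the uniformity of the fullness data: one must check that the maps $L$ and $N$ witnessing $(L,N)$-fullness of $\sigma_p$ can be taken to depend only on $\mathcal{D}$ and $B$ (through its comparison constants) and not on the particular projection $p$ — in particular they should not degenerate as $\tau_{B,\omega}(p)\to 0$, which is why the hypothesis $\tau_{B,\omega}(p)>0$ alone is enough only because the conclusion's constants $(G_1,G_2,m,\delta)$ are allowed to be produced \emph{after} $p$ is fixed, whereas they are in fact produced before. Resolving this amounts to inspecting the proof of \cite[Proposition 3.7]{Na3}: strict comparison of $B$ yields, for each positive contraction $a\in\mathcal{D}$ and each tolerance, a bound on how many copies of $\sigma_p(a)$ are needed to dominate $\sigma_p(s)$, and this bound is governed by $d_{\tau_{B,\omega}}(\sigma_p(a))/d_{\tau_{B,\omega}}(\sigma_p(s)) = d_{\tau_{\mathcal{D}}}(a)$, which is manifestly $p$-independent. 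Once that is verified, the rest is the routine translation of \cite[Proposition 3.3]{Na3} into the present notation, together with the remark (already invoked in the text via Proposition~\ref{pro:central-strict-comparison} and the discussion preceding the theorem) that $\tau_{B,\omega}$ is the unique tracial state on the relevant algebras, so there is no ambiguity in the quantitative fullness estimates.
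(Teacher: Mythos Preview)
Your proposal is correct and follows essentially the same route as the paper: the paper's argument is the paragraph immediately preceding the theorem, which verifies that $\sigma_p$ is $(L,N)$-full via \cite[Lemma~3.5 and Proposition~3.7]{Na3} (using strict comparison of $B$) and then invokes \cite[Proposition~3.3]{Na3} directly. One small remark: the ``perturbation-to-$\ast$-homomorphism step inside the ultrapower $B_p^{\omega}$'' you describe is neither needed nor quite available as stated---$B_p^{\omega}$ is a hereditary subalgebra of $B^{\omega}$ rather than an ultrapower---but this is moot since \cite[Proposition~3.3]{Na3} already applies to approximately multiplicative maps, and your observation that the fullness data $(L,N)$ are $p$-independent (because $d_{\tau_{B,\omega}}(\sigma_p(a))=\tau_{B,\omega}(p)\,d_{\tau_{\mathcal{D}}}(a)$, so the relevant ratios cancel $\tau_{B,\omega}(p)$) is exactly the point that makes the quantifier order in the statement work.
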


Using Proposition \ref{pro:central-strict-comparison}, Proposition \ref{pro:property-w-ii} 
and Corollary \ref{cor:MvN-u} instead of \cite[Proposition 4.1]{Na3}, \cite[Proposition 4.2]{Na3} 
and \cite[Proposition 4.8]{Na3}, the same proof as \cite[Lemma 5.1]{Na3} shows the 
following lemma. 

\begin{lem}\label{lem:approximation}
Let $\Omega$ be a compact metrizable space, and let $F$ be a finite subset of $C(\Omega)$ 
and $\varepsilon >0$. 
Suppose that $\psi_1$ and $\psi_2$ are unital homomorphisms from $C(\Omega)$ to 
$F(\Phi (\mathcal{D}), B)$ 
such that 
$
\tau_{B, \omega} \circ \psi_1  = \tau_{B, \omega} \circ \psi_2 .
$ 
Then there exist a projection $p\in F(\Phi (\mathcal{D}), B)$, 
$(F,\varepsilon)$-multiplicative unital c.p. maps $\psi_1^{\prime}$ and $\psi_2^{\prime}$ from 
$C(\Omega)$ to $pF(\Phi (\mathcal{D}), B)p$, a unital homomorphism $\sigma$ 
from $C(\Omega)$ to $(1-p)F(\Phi (\mathcal{D}), B)(1-p)$ with finite-dimensional 
range and a unitary element $u\in F(\Phi (\mathcal{D}), B)$ 
such that 
$$
0 <\tau_{B, \omega} (p) < \varepsilon, \;
\| \psi_1 (f)- (\psi_1^{\prime}(f)+ \sigma (f))\| <\varepsilon,  \;
\| \psi_2 (f)- u(\psi_2^{\prime}(f)+ \sigma (f))u^*\| <\varepsilon 
$$
for any $f\in F$. 
\end{lem}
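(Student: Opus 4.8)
The plan is to follow the strategy of \cite[Lemma 5.1]{Na3} and reduce everything to the structural facts about $F(\Phi(\mathcal{D}), B)$ established above. Since $\Omega$ is a compact metrizable space, $C(\Omega)$ is generated (up to small errors on the prescribed finite set $F$) by finitely many elements, and every unital homomorphism $\psi\colon C(\Omega)\to F(\Phi(\mathcal{D}),B)$ is determined by a trace, namely a Radon probability measure on $\Omega$; the hypothesis $\tau_{B,\omega}\circ\psi_1=\tau_{B,\omega}\circ\psi_2$ says these two measures coincide. The first step is to approximate $\psi_1$ and $\psi_2$, on the finite set $F$ and within $\varepsilon$, by homomorphisms with finite-dimensional range coming from a common partition of $\Omega$: choose a finite Borel partition $\{U_1,\dots,U_k\}$ of $\Omega$ into sets of small diameter, pick $z_j\in U_j$, set $\mu(U_j)=\tau_{B,\omega}(\psi_1(1_{U_j}))=\tau_{B,\omega}(\psi_2(1_{U_j}))$, and use Proposition \ref{pro:property-w-ii}(ii) together with Proposition \ref{pro:central-strict-comparison} (strict comparison, and the factorization $r^*br=a$) to produce, inside $F(\Phi(\mathcal{D}),B)$, two families of mutually orthogonal projections $\{p_j^{(1)}\}$ and $\{p_j^{(2)}\}$ summing to $1$ with $\tau_{B,\omega}(p_j^{(i)})=\mu(U_j)$; the point evaluation homomorphisms $f\mapsto\sum_j f(z_j)p_j^{(i)}$ then approximate $\psi_i$ on $F$.

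Next I would compare the two finite-dimensional homomorphisms. Since $\tau_{B,\omega}(p_j^{(1)})=\tau_{B,\omega}(p_j^{(2)})$ for each $j$, Corollary \ref{cor:MvN-u} gives partial isometries implementing Murray–von Neumann equivalences $p_j^{(1)}\sim p_j^{(2)}$, and — provided we are in the regime where the relevant projections have trace $<1$, which one arranges by first splitting off a single block — these can be upgraded to a single unitary $v\in F(\Phi(\mathcal{D}),B)$ with $v(\sum_j f(z_j)p_j^{(2)})v^*=\sum_j f(z_j)p_j^{(1)}$. So up to $\varepsilon$ and unitary conjugation the two homomorphisms are literally equal to a common $\sigma_0(f)=\sum_j f(z_j)p_j$ with finite-dimensional range. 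The remaining task is purely bookkeeping: to match the statement we must peel off a small-trace corner. Using Proposition \ref{pro:property-w-ii}(iii) (or just (ii) applied inside a corner) choose a subprojection $p\le 1$ with $0<\tau_{B,\omega}(p)<\varepsilon$ which absorbs one of the blocks — more precisely split one $p_{j_0}$, or adjoin a fresh projection, so that $\sigma_0=\sigma_0'\oplus\sigma$ with $\sigma$ having finite-dimensional range supported on $1-p$ and $\sigma_0'$ supported on $p$; then set $\psi_i'$ to be the corner of (the conjugated) $\psi_i$ on $pF(\Phi(\mathcal{D}),B)p$, which is automatically $(F,\varepsilon)$-multiplicative and unital into the corner, absorb $v$ into the final unitary $u$, and one reads off exactly the three estimates claimed.

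The main obstacle I anticipate is the same one as in \cite[Lemma 5.1]{Na3}: controlling the passage from ``mutually equivalent orthogonal projection families'' to ``a single implementing unitary,'' because Corollary \ref{cor:MvN-u} only promotes Murray–von Neumann equivalence to unitary equivalence when one of the projections has trace strictly less than $1$. This forces the preliminary step of separating one block (say the one indexed by $j_0$) and handling it by a rotation/Connes-type $2\times2$ trick inside the corner where it dominates, and it is also exactly why the small projection $p$ with $\tau_{B,\omega}(p)<\varepsilon$ appears in the conclusion rather than being absorbed entirely. Everything else — the finite-dimensional approximation of $C(\Omega)$-homomorphisms via partitions, and the verification that corners of unital c.p.\ maps remain $(F,\varepsilon)$-multiplicative — is routine given Propositions \ref{pro:central-strict-comparison}, \ref{pro:property-w-ii} and Corollary \ref{cor:MvN-u}.
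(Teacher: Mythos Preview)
Your strategy and list of ingredients match the paper exactly (it simply says ``the same proof as \cite[Lemma 5.1]{Na3}'' with Propositions \ref{pro:central-strict-comparison}, \ref{pro:property-w-ii} and Corollary \ref{cor:MvN-u} substituted in). But there is a genuine gap in your first step. You produce the families $\{p_j^{(i)}\}$ abstractly via Proposition \ref{pro:property-w-ii}(ii), prescribing only their traces $\mu(U_j)$, and then assert that $f\mapsto\sum_j f(z_j)p_j^{(i)}$ approximates $\psi_i$ on $F$. That assertion is false in general: projections chosen only for their trace values bear no relation to $\psi_i$. For instance if $\Omega=\{0,1\}$ and $\psi_1(f)=f(0)e+f(1)(1-e)$, replacing $e$ by any other projection $q$ with $\tau_{B,\omega}(q)=\tau_{B,\omega}(e)$ yields $f(0)q+f(1)(1-q)$, which is not close to $\psi_1(f)$. (Also, $1_{U_j}\notin C(\Omega)$ for a Borel partition, so $\psi_i(1_{U_j})$ is not defined.)

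What is missing is precisely the link between the projections and $\psi_i$, and this is where part (iii) of Proposition \ref{pro:property-w-ii} --- not (ii) --- enters in \cite[Lemma 5.1]{Na3}. One takes mutually \emph{orthogonal} positive norm-one elements $g_1,\dots,g_k\in C(\Omega)$ with small supports, so that the images $\psi_i(g_j)$ are orthogonal positive elements of $F(\Phi(\mathcal{D}),B)$; then Proposition \ref{pro:property-w-ii}(iii) yields projections $p_j^{(i)}$ inside the hereditary subalgebras $\overline{\psi_i(g_j)F(\Phi(\mathcal{D}),B)\psi_i(g_j)}$ with $\tau_{B,\omega}(p_j^{(i)})$ chosen equal for $i=1,2$ and close to $d_{\tau_{B,\omega}}(\psi_i(g_j))$. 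Because each $p_j^{(i)}$ sits under $\psi_i(g_j)$ with nearly full trace there, one gets the honest approximation $\psi_i(f)\approx\sum_j f(z_j)p_j^{(i)}$ modulo the corner $1-\sum_j p_j^{(i)}$; that leftover corner \emph{is} the small projection $p$ of the statement, forced by the construction rather than introduced as bookkeeping at the end. Once this step is in place, your second and third paragraphs (matching the two families via Corollary \ref{cor:MvN-u} and assembling the unitary $u$) go through as written.
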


The following lemma is essentially the same as \cite[Theorem 5.2]{Na3} and \cite[Theorem 5.2]{Na}. 

\begin{lem}\label{lem:unitary-equivalence-1}
Let $\Omega$ be a compact metrizable space, and let $F_1$ be a finite subset of 
$C(\Omega)$ and $F_2$ a finite subset of $\mathcal{D}$, and let $\varepsilon >0$.  
Then there exist mutually orthogonal positive elements $h_1,h_2,...,h_{l}$ in $C(\Omega)$ of 
norm one such that the following holds. 
If $\psi_1$ and $\psi_2$ are  unital homomorphisms from $C(\Omega)$ to 
$F(\Phi(\mathcal{D}), B)$ such that  
$$
\tau_{B, \omega} (\psi_1 (h_i))>0, \; 1 \leq \forall i \leq l
\quad \text{and} \quad  
\tau_{B, \omega} \circ \psi_1  =\tau_{B, \omega} \circ \psi_2,  
$$
then there exist a unitary elements $u$ in $(B^{\omega})^{\sim}$ such that 
$$
\| u\rho (\psi_1 (f)\otimes a)u^* - \rho (\psi_2(f)\otimes  a) \| < \varepsilon 
$$
for any $f\in F_1$, $a\in F_2$. 
\end{lem}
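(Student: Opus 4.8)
The plan is to combine the stable uniqueness theorem (Theorem~\ref{thm:stable-uniqueness}) with the approximate decomposition provided by Lemma~\ref{lem:approximation}, in the same spirit as \cite[Theorem 5.2]{Na3}. First I would fix $\Omega$, $F_1\subset C(\Omega)$, $F_2\subset\mathcal{D}$ and $\varepsilon>0$, and feed $(F_1,F_2,\varepsilon/4)$ (say) into Theorem~\ref{thm:stable-uniqueness} to obtain finite sets $G_1\subset C(\Omega)$, $G_2\subset\mathcal{D}$, an integer $m$ and a tolerance $\delta>0$. Then I would invoke Lemma~\ref{lem:approximation} with the finite set $F:=F_1\cup G_1$ and a sufficiently small tolerance $\varepsilon'$ (chosen depending on $\delta$, $m$, $\varepsilon$, and the norms of elements of $F_1,F_2$) to get, for any pair of unital homomorphisms $\psi_1,\psi_2:C(\Omega)\to F(\Phi(\mathcal{D}),B)$ with $\tau_{B,\omega}\circ\psi_1=\tau_{B,\omega}\circ\psi_2$, a projection $p$ with $0<\tau_{B,\omega}(p)<\varepsilon'$, $(F,\varepsilon')$-multiplicative unital c.p.\ maps $\psi_1',\psi_2':C(\Omega)\to pF(\Phi(\mathcal{D}),B)p$, a unital homomorphism $\sigma:C(\Omega)\to(1-p)F(\Phi(\mathcal{D}),B)(1-p)$ with finite-dimensional range, and a unitary $v\in F(\Phi(\mathcal{D}),B)$ implementing the approximate equalities $\psi_1\approx\psi_1'+\sigma$ and $\psi_2\approx v(\psi_2'+\sigma)v^*$.

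The key observation is that the ``small'' part sitting in $pF(\Phi(\mathcal{D}),B)p$ is exactly where Theorem~\ref{thm:stable-uniqueness} applies, provided I can choose the $h_i$ in the statement so that the positivity hypotheses $\tau_{B,\omega}(\psi_1(h_i))>0$ force $\tau_{B,\omega}(p)$ (hence $d_{\tau_{B,\omega}}$ of the relevant positive elements of $B_p^\omega$) to be large enough to absorb $m$ copies of point evaluations; this is the standard ``$\sigma$ dominates enough mass'' device, and one picks finitely many mutually orthogonal norm-one positive functions $h_1,\dots,h_l$ covering $\Omega$ finely enough that $\tau_{B,\omega}\circ\psi_1$ restricted to each $h_i$ controls the weight at the point masses $z_1,\dots,z_m$ produced by the uniqueness theorem. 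Since $\sigma$ has finite-dimensional range and $\psi_1',\psi_2'$ are approximately equal in trace, one applies Theorem~\ref{thm:stable-uniqueness} to $\psi_1'$ and $\psi_2'$ viewed as maps into $B_p^\omega$ (via $\rho(\,\cdot\,\otimes b)$), absorbs the $m$-fold point-mass summands into copies of $\sigma$ (using Proposition~\ref{pro:property-w-ii} and Corollary~\ref{cor:MvN-u} to match up the relevant projections by unitary equivalence inside $F(\Phi(\mathcal{D}),B)$), and thereby produces a unitary in $F(\Phi(\mathcal{D}),B)^{\sim}$, or rather in $(B^\omega)^{\sim}$ after applying $\rho$ and Proposition~\ref{pro:lift-unitary}, conjugating $\rho(\psi_1(f)\otimes a)$ onto $\rho(\psi_2(f)\otimes a)$ up to $\varepsilon$.

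The main obstacle I expect is bookkeeping: making the absorption of point-mass summands genuinely work inside the relative central sequence algebra $F(\Phi(\mathcal{D}),B)$ rather than in a matrix amplification of it. Concretely, Theorem~\ref{thm:stable-uniqueness} gives a unitary in $M_{m^2+1}(B_p^\omega)^{\sim}$, so one must use that $F(\Phi(\mathcal{D}),B)$ contains a unital copy of $M_{m^2+1}(\mathbb{C})$ (Proposition~\ref{pro:property-w-ii}(i)) together with the matching of projections of equal trace (Corollary~\ref{cor:MvN-u}, using $\tau_{B,\omega}(p)<\varepsilon'<1$) to realize that matrix unitary as an honest unitary in $F(\Phi(\mathcal{D}),B)$, and then push everything down to $B^\omega$ via $\rho$ with the estimates intact; controlling the accumulated errors from the several approximations (the $(F,\varepsilon')$-multiplicativity of $\psi_1',\psi_2'$, the tolerances from Theorem~\ref{thm:stable-uniqueness}, and the conjugation by $v$) so that the final bound is below $\varepsilon$ is routine but requires care. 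Once this is in place, lifting the resulting unitary from $F(\Phi(\mathcal{D}),B)$ to $(B^\omega)^{\sim}\cap\Phi(\mathcal{D})'$ via Proposition~\ref{pro:lift-unitary} and composing with $\rho$ gives the desired unitary $u\in(B^\omega)^{\sim}$, completing the argument exactly as in \cite[Theorem 5.2]{Na3}.
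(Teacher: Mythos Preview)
Your proposal is correct and follows essentially the same approach as the paper, which simply says that the proof of \cite[Theorem~5.2]{Na3} goes through verbatim once one substitutes Proposition~\ref{pro:lift-unitary}, Theorem~\ref{thm:stable-uniqueness} and Lemma~\ref{lem:approximation} for the corresponding results in \cite{Na3}; you have correctly identified all of these ingredients and the overall architecture. One small wording slip: you write that the hypotheses $\tau_{B,\omega}(\psi_1(h_i))>0$ ``force $\tau_{B,\omega}(p)$ \dots\ to be large enough,'' but of course $\tau_{B,\omega}(p)$ is \emph{small} by construction---the role of the $h_i$ is exactly what you say next, namely to guarantee that the finite-dimensional part $\sigma$ carries enough mass near each of the points $z_1,\dots,z_m$ to absorb the $m^2$ point-evaluation summands inside $(1-p)F(\Phi(\mathcal{D}),B)(1-p)$.
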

\begin{proof}
Take positive elements $h_1,h_2,...,h_l$ in $C(\Omega)$ by the same way as  in the proof of 
\cite[Theorem 5.2]{Na3}. 
Let $\psi_1$ and $\psi_2$ be 
unital homomorphisms from $C(\Omega)$ to 
$F(\Phi(\mathcal{D}), B)$ such that  
$\tau_{B, \omega} (\psi_1 (h_i))>0$ for any  $1 \leq i \leq l$ and 
$\tau_{B, \omega} \circ \psi_1=\tau_{B, \omega} \circ \psi_2$. 
Define homomorphisms $\Psi_1$ and $\Psi_2$ 
from $C(\Omega)\otimes \mathcal{D}$ to $B^{\omega}$ by 
$$
\Psi_1: =\rho\circ (\psi_1\otimes\mathrm{id}_{\mathcal{D}}) \quad \text{and} 
\quad \Psi_2:=\rho \circ (\psi_2\otimes \mathrm{id}_{\mathcal{D}}).
$$ 
Note that there exists $\nu >0$ such that $\tau_{B, \omega}(\psi_1(h_{i}))\geq \nu$ for any  
$1 \leq i \leq l$. 
Using Proposition \ref{pro:lift-unitary}, Theorem \ref{thm:stable-uniqueness} and 
Lemma \ref{lem:approximation} instead of \cite[Corollary 4.10]{Na3}, \cite[Corollary 3.8]{Na3} 
and \cite[Lemma 5.1]{Na3}, the same argument as in the proof of \cite[Theorem 5.2]{Na3} shows that 
there exist a unitary elements $u$ in $(B^{\omega})^{\sim}$ such that 
$$
\| u\Psi_1 (f\otimes a)u^* - \Psi_2(f\otimes  a) \| < \varepsilon 
$$
for any $f\in F_1$, $a\in F_2$. 
Therefore we obtain the conclusion. 
\end{proof}

The following theorem is a generalization of \cite[Theorem 5.3]{Na3}. See also 
\cite[Theorem 5.3]{Na}.

\begin{thm}
Let $N_1$ and $N_2$ be normal elements in $F(\Phi (\mathcal{D}), B)$ such that 
$\mathrm{Sp} (N_1)=\mathrm{Sp} (N_2)$ and 
$\tau_{B, \omega} (f(N_1)) >0$ for any $f\in C(\mathrm{Sp}(N_1))_{+}\setminus \{0\}$. 
Then there exists a unitary element $u$ in $F(\Phi(\mathcal{D}), B)$ 
such that $uN_1u^* =N_2$ if and only if 
$
\tau_{B, \omega} (f(N_1))= \tau_{B, \omega} (f(N_2))
$ 
for any $f\in C(\mathrm{Sp}(N_1))$. 
\end{thm}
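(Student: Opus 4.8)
The plan is to prove the non-trivial direction by first producing \emph{approximate} unitary equivalences inside $F(\Phi(\mathcal{D}),B)$ and then upgrading to an exact one by a reindexation argument, following the scheme of \cite[Theorem 5.3]{Na3}. The easy direction is immediate: if $uN_1u^*=N_2$ then $uf(N_1)u^*=f(N_2)$ for every $f\in C(\mathrm{Sp}(N_1))$ by the continuous functional calculus, and since $\tau_{B,\omega}$ is a tracial state on $F(\Phi(\mathcal{D}),B)$ (Proposition \ref{pro:central-strict-comparison}) this gives $\tau_{B,\omega}(f(N_1))=\tau_{B,\omega}(uf(N_1)u^*)=\tau_{B,\omega}(f(N_2))$.

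For the converse, write $\Omega:=\mathrm{Sp}(N_1)=\mathrm{Sp}(N_2)\subset\mathbb{C}$ and let $\psi_j\colon C(\Omega)\to F(\Phi(\mathcal{D}),B)$ be the unital homomorphism $\psi_j(f)=f(N_j)$; since $\mathrm{Sp}(N_j)=\Omega$ each $\psi_j$ is injective, and finding a unitary $u$ with $uN_1u^*=N_2$ is the same as finding $u$ with $u\psi_1(\cdot)u^*=\psi_2(\cdot)$. The hypothesis $\tau_{B,\omega}(f(N_1))>0$ for all nonzero $f\in C(\Omega)_{+}$, combined with $\tau_{B,\omega}\circ\psi_1=\tau_{B,\omega}\circ\psi_2$, shows that $\tau_{B,\omega}\circ\psi_j$ is \emph{faithful} on $C(\Omega)$ for $j=1,2$; this is exactly what guarantees that for \emph{any} choice of nonzero positive $h_1,\dots,h_l\in C(\Omega)$ one has $\tau_{B,\omega}(\psi_j(h_i))>0$, so Lemma \ref{lem:unitary-equivalence-1} is always applicable to $\psi_1,\psi_2$ (and to $\psi_2,\psi_1$).

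Fix a finite $F\subset C(\Omega)$ and $\varepsilon>0$. Applying Lemma \ref{lem:unitary-equivalence-1} (with $F_1=F$ and $F_2$ a suitable finite subset of $\mathcal{D}$ containing a strictly positive element) yields a unitary $w\in(B^{\omega})^{\sim}$ with $\|w\rho(\psi_1(f)\otimes a)w^*-\rho(\psi_2(f)\otimes a)\|<\varepsilon$ for $f\in F$, $a\in F_2$. The technical heart of the argument is to convert this conjugacy, which a priori is only visible through $\rho$ and is implemented by a unitary outside the relative commutant, into an honest approximate conjugacy $\|u\psi_1(f)u^*-\psi_2(f)\|<\varepsilon'$ inside $F(\Phi(\mathcal{D}),B)$ implemented by a unitary $u$ there; this is carried out by the same bookkeeping as in \cite[Theorem 5.3]{Na3}, pushing $w$ into $(B^{\sim})^{\omega}\cap\Phi(\mathcal{D})'$ modulo $\mathrm{Ann}(\Phi(\mathcal{D}),B^{\omega})$ and using Proposition \ref{pro:lift-unitary} to lift and normalize unitaries. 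I expect this transfer step, rather than the uniqueness input itself, to be the main obstacle: the substantive ingredients (the stable uniqueness Theorem \ref{thm:stable-uniqueness}, the approximation Lemma \ref{lem:approximation}, and Lemma \ref{lem:unitary-equivalence-1}) are already in hand, and what remains is a careful but essentially routine passage through the ultrapower quotient.

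Finally, having produced for every finite $F\subset C(\Omega)$ and $\varepsilon>0$ a unitary in $F(\Phi(\mathcal{D}),B)$ conjugating $\psi_1$ to $\psi_2$ up to $\varepsilon$ on $F$, I would upgrade to an exact unitary equivalence. Lifting these unitaries to $(B^{\sim})^{\omega}\cap\Phi(\mathcal{D})'$ via Proposition \ref{pro:lift-unitary} and using that $C(\Omega)$ is singly generated (by $z=\mathrm{id}_{\Omega}$), a standard reindexation / one-sided Elliott intertwining argument in the ultrapower relative commutant produces a single unitary $u\in F(\Phi(\mathcal{D}),B)$ with $uN_1u^*=N_2$, which is the desired conclusion.
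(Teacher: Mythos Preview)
Your proposal is correct and uses the same ingredients as the paper: Lemma \ref{lem:unitary-equivalence-1} together with a diagonal/reindexation argument in the ultrapower. The paper's execution is organized slightly more directly than your two-stage plan: rather than first manufacturing approximate conjugacies inside $F(\Phi(\mathcal{D}),B)$ and then reindexing, it fixes $F_1=\{1,\iota\}$ once and for all (using single generation of $C(\Omega)$ at the outset rather than at the end), applies Lemma \ref{lem:unitary-equivalence-1} for each $m$ with $F_{2,m}$ exhausting $\mathcal{D}$ and tolerance $1/m$, and performs a \emph{single} diagonalization over $m$ on the resulting unitaries $(u_{m,n})_n\in(B^\omega)^\sim$. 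The $f=1$ estimate then forces the diagonal unitary $(u_n)_n$ into $(B^\sim)^\omega\cap\Phi(\mathcal{D})'$, and the $f=\iota$ estimate gives $[(u_n)_n]N_1[(u_n)_n]^*=N_2$ in $F(\Phi(\mathcal{D}),B)$ in the same stroke; the ``transfer step'' you flag as the main obstacle and the separate lift via Proposition \ref{pro:lift-unitary} are thereby absorbed into one pass.
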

\begin{proof}
It is enough to show the if part because the only if part is obvious. 
Let $\Omega:= \mathrm{Sp}(N_1)=\mathrm{Sp}(N_2)$, and 
define unital homomorphisms $\psi_1$ and $\psi_2$ from $C(\Omega)$ to 
$F(\Phi (\mathcal{D}), B)$ by $\psi_1(f):=f(N_1)$ and $\psi_2(f):=f(N_2)$ for any 
$f\in C(\Omega)$. 
By the Choi-Effros lifting theorem, there exist sequences of unital c.p. maps 
$\{\psi_{1,n}\}_{n\in\mathbb{N}}$ and 
$\{\psi_{2, n}\}_{n\in\mathbb{N}}$ from $C(\Omega)$ to $B^{\sim}$ such that 
$\psi_1(f)=[(\psi_{1, n}(f))_n]$ and $\psi_2(f)=[(\psi_{2,n}(f))_n]$ for any $f\in C(\Omega)$. 
Let $F_1:=\{1, \iota \}\subset C(\Omega)$ where $\iota$ is the identity function on $\Omega$, 
that is $\iota(z)=z$ for any $z\in \Omega$, and let $\{F_{2,m}\}_{m\in\mathbb{N}}$ be an increasing 
sequence of finite subsets in $\mathcal{D}$ such that $\mathcal{D}=
\overline{\bigcup_{m\in\mathbb{N}}F_{2,m}}$. 
For any $m\in\mathbb{N}$, applying Lemma \ref{lem:unitary-equivalence-1} to $F_1$, 
$F_{2,m}$ and $1/m$, we obtain mutually orthogonal positive elements $h_{1,m}$, $h_{2,m}$,...,$h_{l(m),m}$ 
in $C(\Omega)$ of norm one. Since we have 
$$
\tau_{B, \omega} (\psi_1 (h_{i,m}))>0, \; 1 \leq \forall i \leq l(m)
\quad \text{and} \quad  
\tau_{B, \omega} \circ \psi_1  =\tau_{B, \omega} \circ \psi_2  
$$
by the assumption, Lemma \ref{lem:unitary-equivalence-1} implies that there exists a unitary 
element $(u_{m,n})_n$ in $(B^{\omega})^{\sim}$ such that 
$$
\| (u_{m,n})_n\rho (\psi_1(f)\otimes a)(u_{m,n}^*)_n - \rho (\psi_2(f)\otimes  a) \| < \frac{1}{m} 
$$
for any $f\in F_1$, $a\in F_{2,m}$. By the definition of $\rho$, we have 
$$
\lim_{n\to\omega} \| u_{m,n}\psi_{1,n}(f)\Phi_n(a)u_{m,n}^*- \psi_{2, n}(f)\Phi_n(a) \| < \frac{1}{m}
$$
for any $f\in F_1$, $a\in F_{2,m}$. Therefore we inductively obtain a decreasing sequence 
$\{X_{m}\}_{m\in\mathbb{N}}$ of elements in $\omega$ such that for any $n\in X_{m}$, 
$$
\| u_{m,n}\psi_{1,n}(f)\Phi_n(a)u_{m,n}^*- \psi_{2, n}(f)\Phi_n(a) \| < \frac{1}{m}
$$
for any $f\in F_1$, $a\in F_{2,m}$.
Set 
$$
u_{n} := \left\{\begin{array}{cl}
1 & \text{if } n\notin X_1   \\
u_{m,n} & \text{if } n\in X_m\setminus X_{m+1}\quad (m\in\mathbb{N})
\end{array}
\right..
$$
Then we have 
$$
\lim_{n\to\omega} \| u_n\Phi_n(a)u_n^*- \Phi_n(a)\|=0, \quad 
\lim_{n\to\omega} \| u_n\psi_{1, n}(\iota)\Phi_n(a)u_n^*- \psi_{2, n}(\iota)\Phi_n(a)\|=0
$$
for any $a\in\mathcal{D}$. 
Therefore, $(u_n)_n\in (B^{\sim})^{\omega}\cap\Phi(\mathcal{D})^{\prime}$ and 
$[(u_n)_n]N_1[(u_n)_n]^*= N_2$ in $F(\Phi(\mathcal{D}), B)$. Since $[(u_n)_n]$ is a unitary element 
in $F(\Phi(\mathcal{D}), B)$, we obtain the conclusion. 
\end{proof}

The following corollary is an immediate consequence of the theorem above. 

\begin{cor} (cf. \cite[Corollary 5.4]{Na})
Let $p$ and $q$ be projections in $F(\Phi (\mathcal{D}), B)$ such that 
$0< \tau_{B \omega} (p) <1$.  Then $p$ and $q$ are unitarily equivalent if and only if 
$
\tau_{B, \omega} (p)= \tau_{B, \omega} (q)
$. 
\end{cor}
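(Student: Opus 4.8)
The plan is to obtain the corollary as a direct application of the preceding theorem with $N_1=p$ and $N_2=q$; projections are in particular normal, so the theorem applies as soon as its hypotheses are checked. The ``only if'' implication is immediate: $\tau_{B,\omega}$ is a tracial state on $F(\Phi(\mathcal{D}),B)$ by Proposition~\ref{pro:central-strict-comparison}, hence invariant under conjugation by unitaries, so if $p$ and $q$ are unitarily equivalent then $\tau_{B,\omega}(p)=\tau_{B,\omega}(q)$.

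For the ``if'' implication, assume $\tau_{B,\omega}(p)=\tau_{B,\omega}(q)$. First I would pin down the spectra: the spectrum of a projection is contained in $\{0,1\}$, and it equals $\{0\}$ only for the zero projection and $\{1\}$ only for the unit. Since $0<\tau_{B,\omega}(p)<1$ we have $p\neq 0$ and $p\neq 1$, so $\mathrm{Sp}(p)=\{0,1\}$; as $\tau_{B,\omega}(q)=\tau_{B,\omega}(p)$ also lies strictly between $0$ and $1$, the same argument gives $\mathrm{Sp}(q)=\{0,1\}$, so $\mathrm{Sp}(N_1)=\mathrm{Sp}(N_2)$. Next I would record that for $f\in C(\{0,1\})$ one has $f(p)=f(0)(1-p)+f(1)p$, hence
\begin{equation*}
\tau_{B,\omega}(f(p))=f(0)\bigl(1-\tau_{B,\omega}(p)\bigr)+f(1)\tau_{B,\omega}(p),
\end{equation*}
and likewise for $q$. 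Since $1-\tau_{B,\omega}(p)>0$ and $\tau_{B,\omega}(p)>0$, if $f\geq 0$ and $f\neq 0$ (equivalently $f(0)>0$ or $f(1)>0$) then $\tau_{B,\omega}(f(p))>0$, which is the positivity hypothesis of the theorem; and substituting $\tau_{B,\omega}(q)=\tau_{B,\omega}(p)$ into the displayed formula gives $\tau_{B,\omega}(f(p))=\tau_{B,\omega}(f(q))$ for every $f\in C(\{0,1\})$. The theorem then produces a unitary $u\in F(\Phi(\mathcal{D}),B)$ with $upu^*=q$, which is the conclusion.

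I do not expect any real obstacle: the argument is just the two-point-space functional calculus together with the observation that the hypothesis $0<\tau_{B,\omega}(p)<1$ is exactly what excludes the degenerate cases $p,q\in\{0,1\}$ in which the spectra could fail to coincide. The only point meriting a moment's care is not to conflate ``Murray--von Neumann equivalent'' with ``unitarily equivalent'' for these non-trivial projections — but that comparison is in any case already available from Corollary~\ref{cor:MvN-u}, and here the theorem delivers a genuine unitary directly.
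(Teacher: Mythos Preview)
Your proposal is correct and is exactly the argument the paper has in mind: the paper states that the corollary is ``an immediate consequence of the theorem above,'' and your write-up simply spells out the two-point functional-calculus verification of that theorem's hypotheses for $N_1=p$, $N_2=q$.
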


The corollary above and the same argument as in the proof of \cite[Corollary 5.5]{Na} show 
the following theorem. 

\begin{thm}\label{thm:relative-property-w}
Let $p$ and $q$ be projections in $F(\Phi (\mathcal{D}), B)$ such that 
$0< \tau_{B, \omega} (p) \leq 1$.  Then $p$ and $q$ are Murray-von Neumann equivalent if and only if 
$
\tau_{B, \omega} (p)= \tau_{B, \omega} (q)
$. 
\end{thm}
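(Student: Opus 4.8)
The ``only if'' direction is immediate: if $z\in F(\Phi(\mathcal{D}),B)$ satisfies $z^*z=p$ and $zz^*=q$, then the trace property of $\tau_{B,\omega}$ on $F(\Phi(\mathcal{D}),B)$ (Proposition~\ref{pro:central-strict-comparison}) gives $\tau_{B,\omega}(p)=\tau_{B,\omega}(z^*z)=\tau_{B,\omega}(zz^*)=\tau_{B,\omega}(q)$. So the content is the converse, which I would split according to the value of $\theta:=\tau_{B,\omega}(p)=\tau_{B,\omega}(q)\in(0,1]$. Recall that $F(\Phi(\mathcal{D}),B)$ is unital since $\Phi(\mathcal{D})$ is $\sigma$-unital, so unitaries and corners make sense without passing to a unitization.

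If $\theta<1$, I would simply apply the corollary preceding this theorem to obtain a unitary $u\in F(\Phi(\mathcal{D}),B)$ with $upu^*=q$; then $z:=up$ satisfies $z^*z=pu^*up=p$ and $zz^*=upu^*=q$, so $p\sim q$. (Alternatively one may quote Corollary~\ref{cor:MvN-u} together with the preceding corollary.)

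If $\theta=1$, the plan is to cut $p$ and $q$ into halves and reduce to the case already treated. Since $p$ is a projection, $d_{\tau_{B,\omega}}(p)=\tau_{B,\omega}(p)=1>0$, so Proposition~\ref{pro:property-w-ii}(iii), applied with $h=p$ and $\theta_0=\tfrac{1}{2}\in[0,1)$, yields a projection $p_1\in\overline{pF(\Phi(\mathcal{D}),B)p}=pF(\Phi(\mathcal{D}),B)p$ with $\tau_{B,\omega}(p_1)=\tfrac{1}{2}$. As $p_1\leq p$, the element $p_2:=p-p_1$ is again a projection, orthogonal to $p_1$, with $\tau_{B,\omega}(p_2)=\tfrac{1}{2}$. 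Doing the same for $q$ gives orthogonal projections $q_1,q_2$ with $q=q_1+q_2$ and $\tau_{B,\omega}(q_i)=\tfrac{1}{2}$. By the case $\theta<1$ there are $v_i\in F(\Phi(\mathcal{D}),B)$ with $v_i^*v_i=p_i$ and $v_iv_i^*=q_i$ for $i=1,2$; writing $v_i=q_iv_ip_i$ and using $p_1p_2=0=q_1q_2$, one checks that $v:=v_1+v_2$ satisfies $v^*v=p_1+p_2=p$ and $vv^*=q_1+q_2=q$, so $p\sim q$.

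The argument involves no real obstacle; the only points needing care are in the case $\theta=1$, namely that the halving in Proposition~\ref{pro:property-w-ii}(iii) is legitimate (so that $p-p_1$ is a projection $\leq p$) and that the two partial isometries glue correctly, which is exactly the bookkeeping carried out in the proof of \cite[Corollary 5.5]{Na}. (One could also phrase the whole converse uniformly by splitting $p$ and $q$ into pieces of trace $\theta/2<1$, but the case distinction above keeps the dependence on the preceding corollary as transparent as possible.)
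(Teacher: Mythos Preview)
Your proof is correct and follows essentially the same approach as the paper's: the paper simply invokes the preceding corollary together with the argument of \cite[Corollary 5.5]{Na}, and you have faithfully reproduced that argument (the case split $\theta<1$ via unitary equivalence, and $\theta=1$ via halving with Proposition~\ref{pro:property-w-ii}(iii) and gluing the resulting partial isometries).
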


By Proposition \ref{pro:property-w-ii} and applying the theorem above 
to $B=\mathcal{D}$ and $\Phi=\mathrm{id}_{\mathcal{D}}$, we obtain the following corollary. 

\begin{cor}\label{cor:property-w}
Let $\mathcal{D}$ be a simple separable nuclear 
monotracial $M_{2^{\infty}}$-stable C$^*$-algebra which is $KK$-equivalent to $\{0\}$. 
Then $\mathcal{D}$ has property W. 
\end{cor}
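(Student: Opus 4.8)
The plan is to recognize Corollary \ref{cor:property-w} as the special case of the results of this section obtained by taking $B=\mathcal{D}$ and $\Phi=\mathrm{id}_{\mathcal{D}}$, after checking that $\mathcal{D}$ legitimately plays the role of the algebra $B$ in the standing assumptions.

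First I would verify the hypotheses on $B:=\mathcal{D}$. Since $\mathcal{D}$ is simple, its unique tracial state $\tau_{\mathcal{D}}$ is faithful. Being $M_{2^{\infty}}$-stable, $\mathcal{D}$ is $\mathcal{Z}$-stable, so a simple monotracial $\mathcal{Z}$-stable C$^*$-algebra has strict comparison; moreover, $\mathcal{D}$ is stably projectionless (a consequence of $KK$-contractibility noted in the introduction) and $\mathcal{Z}$-stable, hence has almost stable rank one, so $\mathcal{D}\subseteq\overline{\mathrm{GL}(\mathcal{D}^{\sim})}$ by the remark following Proposition \ref{pro:non-sep-gl}. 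Thus $\mathcal{D}$ is a simple monotracial C$^*$-algebra with strict comparison satisfying $\mathcal{D}\subseteq\overline{\mathrm{GL}(\mathcal{D}^{\sim})}$. Also $\mathcal{D}$ is not type I: a simple type I C$^*$-algebra is elementary, hence contains a rank-one projection, contradicting stable projectionlessness; so $\mathcal{D}$ is a simple separable non-type I nuclear monotracial C$^*$-algebra, which is all that is needed for the propositions quoted below to apply to $A=\Phi(\mathcal{D})$.

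Next I would set $\Phi:=\mathrm{id}_{\mathcal{D}}$, viewed as the embedding $\mathcal{D}\hookrightarrow\mathcal{D}^{\omega}$ by constant sequences; then $\tau_{\mathcal{D},\omega}\circ\Phi=\tau_{\mathcal{D}}$, $F(\Phi(\mathcal{D}),\mathcal{D})=F(\mathcal{D},\mathcal{D})=F(\mathcal{D})$, and $\tau_{\mathcal{D},\omega}$ is its unique tracial state. Clause (i) of property W is then precisely Proposition \ref{pro:property-w-ii}(ii) in this case: for every $\theta\in[0,1]$ there is a projection $p\in F(\mathcal{D})$ with $\tau_{\mathcal{D},\omega}(p)=\theta$. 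For clause (ii), suppose $p,q\in F(\mathcal{D})$ are projections with $0<\tau_{\mathcal{D},\omega}(p)=\tau_{\mathcal{D},\omega}(q)$; since $\tau_{\mathcal{D},\omega}(p)\le 1$, Theorem \ref{thm:relative-property-w}, applied with $B=\mathcal{D}$ and $\Phi=\mathrm{id}_{\mathcal{D}}$, shows that $p$ and $q$ are Murray--von Neumann equivalent. Hence $\mathcal{D}$ has property W. The only real work is the bookkeeping of the standing hypotheses in the second paragraph; once that is in place, the conclusion is an immediate specialization of the two cited results, so there is no substantive obstacle.
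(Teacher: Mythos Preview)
Your proposal is correct and follows exactly the paper's approach: the corollary is obtained by specializing Proposition~\ref{pro:property-w-ii}(ii) and Theorem~\ref{thm:relative-property-w} to the case $B=\mathcal{D}$, $\Phi=\mathrm{id}_{\mathcal{D}}$. Your extra paragraph verifying that $\mathcal{D}$ satisfies the standing hypotheses on $B$ (strict comparison and $\mathcal{D}\subseteq\overline{\mathrm{GL}(\mathcal{D}^{\sim})}$) is bookkeeping the paper leaves implicit, but it is correct and a sensible thing to spell out.
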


\section{Uniqueness theorem}\label{sec:uniqueness}

In this section we shall show that if $D$ has property W, then every trace preserving 
endomorphism of $D$ is approximately inner. 
Furthermore, we shall consider a uniqueness theorem for approximate homomorphisms 
from a simple separable nuclear monotracial $M_{2^{\infty}}$-stable C$^*$-algebra 
$\mathcal{D}$ which is $KK$-equivalent to $\{0\}$ for an existence theorem in 
Section \ref{sec:existence}. 

Let $D$ be a simple separable nuclear monotracial C$^*$-algebra, and let 
$\varphi$ be a trace preserving endomorphism of $D$. 
Define a homomorphism $\Phi$ from $D$ to $M_{2}(D)$ by 
$$
\Phi (a):= \left(\begin{array}{cc}
               a   &   0    \\ 
               0   &  \varphi (a)    
 \end{array} \right)
$$
for any $a\in D$. Since $\varphi$ is trace preserving, we see that $\tau_{M_2(D), \omega}|_{\Phi(D)}$ 
is a state. Hence $\tau_{M_{2}(D), \omega}$ is a tracial state on 
$F(\Phi(D), M_2(D))$. (See Proposition \ref{pro:trace-induce}.) 
Define homomorphisms $\iota_{11}$ and $\iota_{22}$ from $F(D)$ to $F(\Phi(D), M_2(D))$ by 
$$
\iota_{11}([(x_n)_n]):=\left[\left(\left(\begin{array}{cc}
             x_n   &   0    \\ 
             0     &   0    
 \end{array} \right)\right)_{n}\right]
\quad \text{and} \quad 
\iota_{22}([(x_n)_n]):=\left[\left(\left(\begin{array}{cc}
             0   &   0               \\ 
             0   &  \varphi(x_n)    
 \end{array} \right)\right)_{n}\right]
$$ 
for any $[(x_n)_n]$ in $F(D)$. It is easy to see that $\iota_{11}$ and $\iota_{22}$ are well-defined.
Put $p:=\iota_{11}(1)$ and $q:=\iota_{22}(1)$. 
Note that $p$ and $q$ are projections in $F(\Phi(D), M_2(D))$ and 
if $\{h_n\}_{n\in\mathbb{N}}$ is an approximate unit for $D$, then 
$$
p=\left[\left(\left(\begin{array}{cc}
             h_n   &   0    \\ 
             0     &   0    
 \end{array} \right)\right)_{n}\right]
\quad \text{and} \quad 
q=\left[\left(\left(\begin{array}{cc}
             0   &   0               \\ 
             0   &  \varphi(h_n)    
 \end{array} \right)\right)_{n}\right].
$$  
It can be easily checked that $\iota_{11}$ is an isomorphism from $F(D)$ onto 
$pF(\Phi(D), M_2(D))p$. 

\begin{lem}\label{lem:w-car-stable}
Let $D$ be a  a simple separable nuclear monotracial C$^*$-algebra with property W. 
Then $D$ is $M_{2^{\infty}}$-stable, and hence $D$ is $\mathcal{Z}$-stable.  
\end{lem}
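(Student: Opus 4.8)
The plan is to verify the standard McDuff-type criterion: to show $D$ is $M_{2^\infty}$-stable it suffices to produce, for each $N\in\mathbb N$, a unital homomorphism from $M_{2^N}(\mathbb C)$ into $F(D)$, together with enough compatibility (in fact, by Kirchberg–Rørdam and Matui–Sato type results, producing a unital embedding of $M_{2^N}(\mathbb C)$ into $F(D)$ for every $N$, or equivalently a unital embedding of $M_{2^\infty}$ into $F(D)$, already yields $D\cong D\otimes M_{2^\infty}$ since $D$ is separable). So the whole task reduces to constructing these matrix units inside $F(D)$ using only property W.

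First I would fix $N$ and use property W(i) to choose a projection $p\in F(D)$ with $\tau_{D,\omega}(p)=2^{-N}$. Then $1-p$ has trace $1-2^{-N}=(2^N-1)2^{-N}$. I want to split $1$ into $2^N$ mutually orthogonal, mutually Murray–von Neumann equivalent projections each of trace $2^{-N}$. To do this I would inductively apply property W(i) inside corners: having produced orthogonal projections $p_1,\dots,p_j$ each of trace $2^{-N}$ with $j<2^N$, the complement $1-(p_1+\cdots+p_j)$ has trace $(2^N-j)2^{-N}>0$; since $F(D)$ is (by Proposition \ref{pro:central-strict-comparison}, applied with $B=A=D$, $\Phi=\mathrm{id}$) monotracial with strict comparison, and by the $(iii)$-type statement in Proposition \ref{pro:property-w-ii} (again with $B=\mathcal D=D$) there is a projection in the corner $\overline{(1-\sum p_i)F(D)(1-\sum p_i)}$ of trace exactly $2^{-N}$; call it $p_{j+1}$. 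After $2^N$ steps I obtain orthogonal projections $p_1,\dots,p_{2^N}$ summing to $1$, each of trace $2^{-N}$. By property W(ii) each $p_i$ is Murray–von Neumann equivalent to $p_1$, so pick $v_i\in F(D)$ with $v_i^*v_i=p_1$, $v_iv_i^*=p_i$ (take $v_1=p_1$). Setting $e_{ij}:=v_iv_j^*$ gives a system of $2^N\times 2^N$ matrix units with $\sum_i e_{ii}=1$, i.e. a unital homomorphism $M_{2^N}(\mathbb C)\to F(D)$.

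The one point needing a little care is upgrading "unital copy of $M_{2^N}(\mathbb C)$ in $F(D)$ for all $N$" to an honest unital copy of $M_{2^\infty}$ in $F(D)$, since the copies produced for different $N$ need not nest. Here I would either invoke the fact that $F(D)$ has the property that such a sequential/approximate condition suffices (this is exactly the mechanism already used implicitly in the proof of Proposition \ref{pro:property-w-ii}(i), where matrix units are assembled along a decreasing sequence in $\omega$), or, more cleanly, argue as follows: $F(D)$ is unital; a separable C$^*$-algebra $D$ is $M_{2^\infty}$-stable if and only if there is a unital $^*$-homomorphism $M_{2^\infty}\to F(D)$, and by a standard reindexing/diagonal argument the existence of unital maps $M_{2^N}(\mathbb C)\to F(D)$ for all $N$, with the trace on $F(D)$ forcing the normalized traces to match, produces such a map. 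I expect this bookkeeping step — making the finite-stage matrix units cohere into a limit embedding — to be the only real obstacle; everything else is a direct application of property W together with the strict-comparison/existence-of-projections facts already established for $F(D)$.

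Finally, $M_{2^\infty}$-stability implies $\mathcal Z$-stability since $M_{2^\infty}$ absorbs $\mathcal Z$ (indeed $\mathcal Z$ embeds unitally into $M_{2^\infty}$ with the right properties, so $D\cong D\otimes M_{2^\infty}\cong D\otimes M_{2^\infty}\otimes\mathcal Z$, whence $D\cong D\otimes\mathcal Z$), giving the last assertion of the lemma.
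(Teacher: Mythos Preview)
Your overall strategy --- produce matrix units in $F(D)$ from property W and then invoke a McDuff-type criterion --- is the right one, and in fact your $N=1$ case \emph{is} the paper's entire proof: pick $p\in F(D)$ with $\tau_{D,\omega}(p)=1/2$ using property W(i), observe $\tau_{D,\omega}(1-p)=1/2>0$, conclude $p\sim 1-p$ by property W(ii), hence obtain a unital embedding $M_2(\mathbb C)\hookrightarrow F(D)$, and then cite Kirchberg \cite[Corollary 1.13, Proposition 4.11]{Kir2} (cf.\ \cite[Proposition 2.12]{BKR}) to pass directly from this single $M_2$-embedding to $M_{2^\infty}$-stability of $D$. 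No induction on $N$, no projections in corners, and no bookkeeping about coherence of different matrix-unit systems is needed.

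Where your write-up goes wrong is precisely in the inductive step for $N\geq 2$: to place a projection of trace $2^{-N}$ inside the corner $(1-\sum p_i)F(D)(1-\sum p_i)$ you appeal to Proposition~\ref{pro:central-strict-comparison} (with $B=A=D$) and to Proposition~\ref{pro:property-w-ii}(iii). Both invocations are circular at this point in the paper. Proposition~\ref{pro:central-strict-comparison} requires $B=D$ to have strict comparison, but strict comparison of $D$ is deduced \emph{from} Lemma~\ref{lem:w-car-stable} (via $M_{2^\infty}$- and hence $\mathcal Z$-stability), not before it --- see the sentence immediately following the lemma. Proposition~\ref{pro:property-w-ii} is stated for a C$^*$-algebra $\mathcal D$ that is already assumed $M_{2^\infty}$-stable and $KK$-equivalent to $\{0\}$; neither hypothesis is available for a general $D$ with property W. So your argument as written uses the conclusion of the lemma to prove it. The fix is simply to stop at $N=1$ and let Kirchberg's structural results for $F(D)$ do the amplification for you.
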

\begin{proof}
Since $D$ has property W, there exists a projection $p$ in $F(D)$ such that 
$\tau_{D,\omega}(p)=1/2$. Moreover, $p$ is Murray-von Neumann equivalent to $1-p$. 
Hence there exists a unital homomorphism from $M_{2}(\mathbb{C})$ to 
$F(D)$. By \cite[Corollary 1.13]{Kir2} and \cite[Proposition 4.11]{Kir2} 
(see \cite[Proposition 2.12]{BKR} for the pioneering work),  $D$ is $M_{2^{\infty}}$-stable. 
\end{proof}

The lemma above implies that if $D$ has property W, then $D$ has strict comparison and 
$D\subseteq \overline{\mathrm{GL}(D^{\sim})}$ by \cite{Ror} and \cite{Rob2}. 
Furthermore, $F(\Phi(D), M_2(D))$ is monotracial and has strict comparison by 
Proposition \ref{pro:central-strict-comparison}. 
The following lemma is related to \cite[Lemma 6.2]{Na}.

\begin{lem}\label{lem:matrix-trick}
With notation as above, if $D$ has property W, then $p$ is Murray-von Neumann equivalent to 
$q$ in $F(\Phi(D), M_2(D))$. 
\end{lem}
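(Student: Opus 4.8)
The plan is to run Connes' $2\times 2$ matrix trick: obtain the equivalence in a finite von Neumann factor and pull it back to $F(\Phi(D), M_2(D))$ by the Matui-Sato machinery. Since $D$ has property W, Lemma \ref{lem:w-car-stable} and the remark after it give that $D$ is $M_{2^\infty}$-stable, has strict comparison and satisfies $D\subseteq\overline{\mathrm{GL}(D^\sim)}$; hence $M_2(D)$ is simple, non-type I and monotracial with faithful trace and strict comparison, so Proposition \ref{pro:central-strict-comparison} shows $F(\Phi(D), M_2(D))$ is monotracial with strict comparison (and with the divisibility property stated there) and Proposition \ref{pro:si} shows $M_2(D)$ has property (SI) relative to $\Phi(D)$. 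Using $\tau_{M_2(D)}(\mathrm{diag}(a,b))=\tfrac12(\tau_D(a)+\tau_D(b))$, $\tau_D\circ\varphi=\tau_D$ and an approximate unit $\{h_n\}_{n\in\mathbb{N}}$ of $D$, one gets
$$
\tau_{M_2(D),\omega}(p)=\lim_{n\to\omega}\tfrac12\tau_D(h_n)=\tfrac12=\lim_{n\to\omega}\tfrac12\tau_D(\varphi(h_n))=\tau_{M_2(D),\omega}(q),
$$
and since $(\mathrm{diag}(h_n,\varphi(h_n)))_n$ acts as a unit on $\Phi(D)$ we also have $p+q=1$ in $F(\Phi(D), M_2(D))$.

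Next I would pass to the von Neumann picture of Section \ref{sec:MS}: the trace-preserving homomorphism $\varrho$ carries $F(\Phi(D), M_2(D))$ onto $M\cap\varrho(\Phi(D))'$, which is a finite factor by Proposition \ref{pro:factor}, so $\varrho(p)$ and $\varrho(q)$ are projections of equal trace $1/2$ in a finite factor and are therefore Murray-von Neumann equivalent there, say $\bar{v}^*\bar{v}=\varrho(p)$ and $\bar{v}\bar{v}^*=\varrho(q)$. I would also note that the corners of $F(\Phi(D), M_2(D))$ are rich in projections: $\iota_{11}$ identifies $pF(\Phi(D), M_2(D))p$ with $F(D)$, which has property W, and since $D$ is simple $\varphi$ is injective, so $\varphi(D)\cong D$ also has property W and (embedding $\varphi(D)$ as the lower-right corner of $\Phi(D)$) there is a natural unital homomorphism $F(\varphi(D))\to qF(\Phi(D), M_2(D))q$. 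In particular, since property W gives unital copies of $M_{2^k}(\mathbb{C})$ in $F(D)$ and $F(\varphi(D))$ for every $k$, both $p$ and $q$ split into $2^k$ mutually orthogonal, mutually Murray-von Neumann equivalent subprojections of trace $2^{-(k+1)}$.

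Finally I would lift $\bar{v}$. Choosing a contractive lift $v_0\in F(\Phi(D), M_2(D))$ of $\bar{v}$, the elements $v_0^*v_0$ and $v_0v_0^*$ agree with $p$ and $q$ modulo the trace-kernel ideal $\ker\varrho$, and the Matui-Sato correction argument --- using strict comparison and the divisibility of Proposition \ref{pro:central-strict-comparison}, property (SI) relative to $\Phi(D)$ to absorb the trace-small error, and the auxiliary projections supplied by property W of the two corners --- then produces a genuine partial isometry $v$ with $v^*v=p$ and $vv^*=q$, so $p$ is Murray-von Neumann equivalent to $q$. (Cancellation of projections in $F(\Phi(D), M_2(D))$, which enters this correction, follows from $D\subseteq\overline{\mathrm{GL}(D^\sim)}$ by Proposition \ref{pro:non-sep-gl}.) The main obstacle is exactly this lifting step: strict comparison by itself only gives Cuntz subequivalence under a \emph{strict} trace inequality, so at the critical value $\tau_{M_2(D),\omega}=1/2$ one genuinely needs the finite factor together with property (SI) and the property-W structure of $pF(\Phi(D), M_2(D))p$ and $qF(\Phi(D), M_2(D))q$ to close the gap.
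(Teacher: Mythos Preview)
Your plan diverges substantially from the paper's proof, and the crucial step is not carried out. You pass to the von Neumann factor $M\cap\varrho(\Phi(D))'$, obtain an equivalence there, and then claim that ``the Matui-Sato correction argument \dots\ produces a genuine partial isometry $v$ with $v^*v=p$ and $vv^*=q$''. This lifting step is the entire content of the lemma, and you have not explained it: property (SI) from Proposition~\ref{pro:si} concerns pairs $(a,b)$ with $\tau_{B,\omega}(a)=0$ and $\inf_m\tau_{B,\omega}(b^m)>0$, and does not by itself lift partial isometries between projections of equal nonzero trace; strict comparison from Proposition~\ref{pro:central-strict-comparison} needs a strict inequality, as you yourself note; and your appeal to cancellation in $F(\Phi(D),M_2(D))$ via Proposition~\ref{pro:non-sep-gl} is misplaced, since that proposition concerns $B^\omega$, not the quotient $F$. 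So what you have is a restatement of the difficulty, not a resolution of it.

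The paper's argument avoids the von Neumann picture and the lifting problem entirely by working with \emph{strict} trace inequalities and a diagonal argument. For each $m$, property~W supplies a projection $q_m\in F(D)$ with $\tau_{D,\omega}(q_m)=1-1/m$; then $\tau_{M_2(D),\omega}(\iota_{22}(q_m))=(1-1/m)/2<1/2=\tau_{M_2(D),\omega}(p)$, so the strong form of strict comparison in Proposition~\ref{pro:central-strict-comparison} gives $r_m$ with $r_m^*pr_m=\iota_{22}(q_m)$. A diagonal argument in $m$ then produces a projection $q'\in F(D)$ with $\tau_{D,\omega}(q')=1$ and a contraction $r$ with $r^*pr=\iota_{22}(q')$. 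Now $prr^*p\sim\iota_{22}(q')$, and since $\iota_{11}$ is an isomorphism onto $pF(\Phi(D),M_2(D))p$ one writes $prr^*p=\iota_{11}(p')$ with $\tau_{D,\omega}(p')=1$. Property~W (applied at trace $1$ inside $F(D)$) gives $1\sim p'$ and $1\sim q'$, and pushing these equivalences through $\iota_{11}$ and $\iota_{22}$ yields
\[
p=\iota_{11}(1)\sim\iota_{11}(p')=prr^*p\sim r^*pr=\iota_{22}(q')\sim\iota_{22}(1)=q.
\]
The point you missed is that the ``equal trace'' problem is circumvented by approximating from below and letting property~W absorb the deficit at the very end, inside $F(D)$ where it applies directly.
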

\begin{proof}
For any $m\in\mathbb{N}$, there exists a projection $q_{m}$ in $F(D)$ such that 
$\tau_{D, \omega}(q_m)=1-1/m$ because $D$ has property W. 
Proposition \ref{pro:central-strict-comparison} implies that 
there exists a contraction $r_m$ in $F(\Phi (D), M_2(D))$ such that $r_m^*pr_m=\iota_{22}(q_m)$. 
By a diagonal argument, we see that there exist a projection $q^{\prime}$ in $F(D)$ and 
a contraction $r$ in $F(\Phi(D), M_2(D))$ such that $\tau_{D, \omega}(q^{\prime})=1$ and 
$r^*pr=\iota_{22}(q^{\prime})$. Note that $\iota_{22}(q^{\prime})$ is Murray-von Neumann equivalent 
to $prr^*p$.  There exists a projection $p^{\prime}$ in $F(D)$ such that 
$\iota_{11}(p^{\prime})=prr^*p$ and $\tau_{D, \omega}(p^{\prime})=1$ because 
$\iota_{11}$ is an isomorphism from $F(D)$ onto $pF(\Phi(D), M_2(D))p$. 
Since $D$ has property W, there exist $v_1$ and $v_2$ in $F(D)$ such that $v_1^*v_1=1$, 
$v_1v_1^*=p^{\prime}$, $v_2^*v_2=1$ and $v_2v_2^*=q^{\prime}$. 
Therefore we have 
$$
p=\iota_{11}(1) \sim \iota_{11}(p^{\prime})=prr^*p\sim r^*pr
= \iota_{22}(q^{\prime}) \sim  \iota_{22}(1)=q. 
$$
\end{proof}

The following theorem is one of the main theorems in this section. 

\begin{thm}\label{thm:uniqueness-i}
Let $D$ be a simple separable nuclear monotracial C$^*$-algebra with property W, and let 
$\varphi$ be a trace preserving endomorphism of $D$. Then $\varphi$ is approximately inner. 
\end{thm}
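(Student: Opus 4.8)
The plan is to use the $2\times 2$ matrix trick (Connes' trick), which is set up in the paragraph preceding Lemma~\ref{lem:matrix-trick}, together with the Murray--von Neumann equivalence $p\sim q$ established in Lemma~\ref{lem:matrix-trick}. Recall $\Phi\colon D\to M_2(D)$ is the diagonal embedding $a\mapsto \mathrm{diag}(a,\varphi(a))$, and $p=\iota_{11}(1)$, $q=\iota_{22}(1)$ are the two corner projections in $F(\Phi(D),M_2(D))$, with $p+q=1$ in that central sequence algebra. By Lemma~\ref{lem:matrix-trick} there is a partial isometry $\bar v$ in $F(\Phi(D),M_2(D))$ with $\bar v^*\bar v=p$ and $\bar v\bar v^*=q$. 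The idea is that such a $\bar v$ (or rather a suitable unitary built from $p,q,\bar v$) implements a unitary conjugation that interchanges the two diagonal corners, which at the level of $D$ says exactly that $\mathrm{id}_D$ and $\varphi$ are conjugate by a sequence of unitaries in $M_2(D)$ that asymptotically commute with $\Phi(D)$; tracing through the matrix entries then yields approximate innerness of $\varphi$ in $D$ itself.

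More precisely, first I would upgrade $\bar v$ to a genuine self-adjoint unitary (symmetry) in $F(\Phi(D),M_2(D))$: since $p+q=1$ and $\bar v^*\bar v=p$, $\bar v\bar v^*=q$, the element $w:=\bar v+\bar v^*$ satisfies $w^*w=ww^*=p+q=1$ and $w^*=w$, and conjugation by $w$ swaps $p$ and $q$, hence swaps $\iota_{11}(1)$ and $\iota_{22}(1)$. Next I would lift $w$ to a representing sequence: by Lemma~\ref{lem:w-car-stable}, $D$ has property W implies $D$ is $M_{2^\infty}$-stable, so $D\subseteq\overline{\mathrm{GL}(D^\sim)}$, hence $M_2(D)$ has the same property and $M_2(D)^\omega\subseteq\overline{\mathrm{GL}((M_2(D)^\omega)^\sim)}$ by Proposition~\ref{pro:non-sep-gl}; combined with the unitary $w$ in the central sequence algebra one obtains (as in Proposition~\ref{pro:lift-unitary} / \cite[Proposition 4.9]{Na3}) a unitary $u=(u_n)_n$ in $(M_2(D)^\sim)^\omega\cap\Phi(D)'$ with $[u]=w$ in $F(\Phi(D),M_2(D))$. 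Conjugating the relation "$w$ swaps the corner projections" and "$u$ commutes with $\Phi(D)$" and writing $u_n=(u_n^{ij})_{i,j=1,2}$ in matrix entries, the asymptotic commutation $u_n\Phi_n(a)\approx\Phi_n(a)u_n$ and the swap relation force $u_n^{12}$ to asymptotically be (a scalar multiple of) a unitary $v_n$ in $D^\sim$ with $v_n a \approx \varphi(a) v_n$ for all $a\in D$, i.e. $\|v_n a v_n^* - \varphi(a)\|\to_\omega 0$.

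Then I would reduce from unitaries in $D^\sim$ to actual innerness in $D$: given a unitary $v\in D^\sim\cap$ (approximately conjugating $\mathrm{id}_D$ to $\varphi$), write $v=d+\lambda 1$ with $d\in D$, $|\lambda|=1$; using an approximate unit $\{h_n\}$ of $D$ one replaces $v$ on a large finite set by a unitary in $D^\sim$ that is a small perturbation of an element of $1+D$, and since $D$ is $\mathcal{Z}$-stable (hence has stable rank one considerations available) one can approximate the conjugating unitaries in $D^\sim$ by unitaries in $D^\sim$ of the form implemented by elements actually carried inside $D$ — concretely, one passes to the standard statement that for a non-unital $D$ with the above properties, a unitary in $D^\sim$ approximately fixing a finite set can be replaced by a unitary in the unitization close to $1$, and innerness "in $D$" for a homomorphism means conjugation by unitaries in $D^\sim$ along an approximate unit. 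I expect the main obstacle to be this last bookkeeping step — carefully extracting from the central-sequence-algebra statement a correctly normalized sequence $(v_n)$ of unitaries in $D^\sim$ (handling the scalar parts $\lambda_n$ via the ultrafilter, as in the proof of Proposition~\ref{pro:non-sep-gl}) and confirming that conjugation by these witnesses approximate innerness in the precise sense intended for non-unital $D$; the algebraic heart (the symmetry $w$ swapping the corners) is short and is exactly Connes' classical argument adapted to $F(\Phi(D),M_2(D))$.
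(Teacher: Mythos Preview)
Your strategy is the paper's strategy --- Connes' $2\times 2$ trick via Lemma~\ref{lem:matrix-trick} --- but your execution misplaces the key step and leaves a genuine gap. The paper does \emph{not} form the symmetry $w=\bar v+\bar v^*$ or invoke Proposition~\ref{pro:lift-unitary}; it simply lifts the partial isometry $V$ itself to a contraction in $M_2(D)^\omega\cap\Phi(D)'$, which (since $\bar v=q\bar v p$) can be taken of the form $\left(\begin{smallmatrix}0&0\\v_n&0\end{smallmatrix}\right)$ with $v_n\in D$. One then reads off $(v_na)_n=(\varphi(a)v_n)_n$, $(v_n^*v_na)_n=a$, $(v_nv_n^*\varphi(a))_n=\varphi(a)$. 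The essential remaining step --- which you do not carry out --- is to turn $(v_n)$ into actual unitaries in $D^\sim$: using $D\subseteq\overline{\mathrm{GL}(D^\sim)}$ (from Lemma~\ref{lem:w-car-stable}) one perturbs each $v_n$ to an invertible in $D^\sim$, sets $u_n:=v_n(v_n^*v_n)^{-1/2}$, and checks via $(v_n^*v_na)_n=a$ that $(u_na)_n=(v_na)_n$, whence $(u_nau_n^*)_n=\varphi(a)$.

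Your detour through $w$ does not avoid this step. The off-diagonal entries of a unitary $u_n\in M_2(D)^\sim$ lie in $D$ (the scalar part $\lambda_n 1$ sits only on the diagonal), and unitarity of $u_n$ together with $[u]=w$ only gives $\gamma_n^*\gamma_n a\to a$ --- the same relation the paper has for $v_n$. So your claim that the off-diagonal entry is ``asymptotically (a scalar multiple of) a unitary in $D^\sim$'' is unjustified; you still need the $D$-level polar decomposition, and you have used the $\mathrm{GL}$ property at the wrong level (for $M_2(D)$, to lift $w$) rather than where it is actually needed (for $D$, to polar-decompose $v_n$). Finally, your last paragraph is a red herring: for non-unital $D$, approximate innerness \emph{means} conjugation by unitaries in $D^\sim$, so once those unitaries are in hand there is nothing further to reduce.
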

\begin{proof}
By Lemma \ref{lem:matrix-trick}, there exists a contraction $V$ in $F(\Phi(D), M_2(D))$ such that 
$$
V^*V=\left[\left(\left(\begin{array}{cc}
            h_n   &   0    \\ 
             0     &   0    
 \end{array} \right)\right)_{n}\right]
\quad \text{and} \quad 
VV^*=\left[\left(\left(\begin{array}{cc}
             0   &   0               \\ 
             0   &  \varphi(h_n)    
 \end{array} \right)\right)_{n}\right]
$$
where $\{h_n\}_{n\in\mathbb{N}}$ is an approximate unit for $D$. 
It can be easily checked that there exists an element $(v_n)_n$ in $D^{\omega}$ such that 
$$
V= \left[\left(\left(\begin{array}{cc}
             0       &   0    \\ 
            v_n     &   0    
 \end{array} \right)\right)_{n}\right],
$$
and we have 
$$
(v_nx)_n=(\varphi (x)v_n)_n,  \quad (v_n^*v_nx)_n=x \quad \text{and} \quad 
(v_nv_n^*\varphi(x))_n=\varphi(x)
$$
for any $x\in D$. 
Since $(v_nx)_n=(\varphi (x)v_n)_n$ and $(\varphi(x)v_nv_n^*)_n=\varphi(x)$, 
we have $(v_nxv_n^*)_n= \varphi (x)$ for any $x\in D$. 
Because of $D\subseteq \overline{\mathrm{GL}(D^{\sim})}$, we may assume that 
$v_n$ is an invertible element in $D^{\sim}$ for any $n\in\mathbb{N}$. (See the proof of 
Proposition \ref{pro:non-sep-gl}.)
For any $n\in\mathbb{N}$, let $u_n:= v_n(v_n^*v_n)^{-1/2}$. 
Then $u_n$ is a unitary element in $D^{\sim}$. 
Since $ (v_n^*v_nx)_n=x$, we have $(u_nx)_n=(v_n(v_n^*v_n)^{-1/2}x)_n=(v_nx)_n$ for any $x\in D$. 
Therefore 
$$
\varphi(x)= (v_nxv_n^*)_n= (u_nxv_n^*)_n =(u_n(v_nx^*)^*)_n=(u_n(u_nx^*)^*)_n=(u_nxu_n^*)_n
$$
for any $x\in D$. Consequently, $\varphi$ is approximately inner. 
\end{proof}

Let $\mathcal{D}$ be a simple separable nuclear monotracial $M_{2^{\infty}}$-stable 
C$^*$-algebra which is $KK$-equivalent to $\{0\}$.
In the rest of this section, we shall consider a uniqueness theorem for 
approximate homomorphisms from $\mathcal{D}$ to certain C$^*$-algebras. 
Let $B$ be a simple monotracial C$^*$-algebra with strict comparison,  
$B \subseteq \overline{\mathrm{GL}(B^{\sim})}$ and  
$M_2(B)\subseteq \overline{\mathrm{GL}(M_2(B)^{\sim})}$, and let $\varphi$ and $\psi$ 
be homomorphisms from $\mathcal{D}$ to $B^{\omega}$ such that 
$\tau_{\mathcal{D}}=\tau_{B, \omega}\circ \varphi=\tau_{B, \omega}\circ \psi$. 
By the Choi-Effros lifting theorem, there exist sequences of contractive c.p. maps 
$\varphi_n$ and $\psi_n$ from $\mathcal{D}$ to $B$ such that 
$\varphi(a)= (\varphi_n(a))_n$ and $\psi (a)=(\psi_n(a))_n$ for any $a\in\mathcal{D}$. 
Define a homomorphism $\Phi$ from $\mathcal{D}$ to $M_{2}(B)^{\omega}$ by 
$$
\Phi (a):= \left(\left(\begin{array}{cc}
      \varphi_n(a)   &   0    \\ 
               0   &  \psi_n (a)    
 \end{array} \right)\right)_n
$$
for any $a\in \mathcal{D}$. 
Since $\tau_{\mathcal{D}}=\tau_{B, \omega}\circ \varphi=\tau_{B, \omega}\circ \psi$, 
$\tau_{M_2(B), \omega}|_{\Phi(\mathcal{D})}$ is a state. Hence $\tau_{M_{2}(B), \omega}$ is a 
tracial state on $F(\Phi(\mathcal{D}), M_2(B))$ as above. 
Since $\mathcal{D}$ is separable, there exist elements $(s_n)_n$ and $(t_n)_n$ in 
$B^{\omega}$ such that  $[(s_n)_n]=1$ in $F(\varphi(\mathcal{D}), B)$ and $[(t_n)_n]=1$ in 
$F(\psi(\mathcal{D}), B)$ by arguments in Section \ref{sec:pre-kir}.  
Put
$$
p:=\left[\left(\left(\begin{array}{cc}
             s_n   &   0    \\ 
             0     &   0    
 \end{array} \right)\right)_{n}\right]
\quad \text{and} \quad 
q:=\left[\left(\left(\begin{array}{cc}
             0   &   0               \\ 
             0   &  t_n    
 \end{array} \right)\right)_{n}\right]
$$
in $F(\Phi(\mathcal{D}), M_2(B))$. It is easy to see that $p$ and $q$ are projections in 
$F(\Phi(\mathcal{D}), M_2(B))$ such that $\tau_{M_2(B), \omega}(p)=\tau_{M_2(B), \omega}(q)=1/2$. 
Theorem \ref{thm:relative-property-w} implies that $p$ is Murray-von Neumann equivalent to 
$q$. 
Therefore we obtain the following theorem by a similar argument as in the proof of 
Theorem \ref{thm:uniqueness-i}.

\begin{thm}\label{thm:uniqueness-ii}
Let $\mathcal{D}$ be a simple separable nuclear monotracial $M_{2^{\infty}}$-stable 
C$^*$-algebra which is $KK$-equivalent to $\{0\}$ and  $B$ a simple monotracial C$^*$-algebra 
with strict comparison, $B\subseteq \overline{\mathrm{GL}(B^{\sim})}$ and 
$M_2(B)\subseteq \overline{\mathrm{GL}(M_2(B)^{\sim})}$. 
If $\varphi$ and $\psi$ are homomorphisms from $\mathcal{D}$ to $B^{\omega}$ 
such that $\tau_{\mathcal{D}}=\tau_{B, \omega}\circ \varphi=\tau_{B, \omega}\circ \psi$, 
then there exists a unitary element $u$ in $(B^{\sim})^{\omega}$ such that 
$\varphi(a)= u\psi(a)u^*$ for any $a\in\mathcal{D}$. 
\end{thm}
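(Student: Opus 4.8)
The plan is to follow the proof of Theorem \ref{thm:uniqueness-i} almost verbatim, the only difference being that the Murray--von Neumann equivalence playing the role of Lemma \ref{lem:matrix-trick} is now handed to us directly: since $M_2(B)$ is a simple monotracial C$^*$-algebra with strict comparison and $M_2(B)\subseteq\overline{\mathrm{GL}(M_2(B)^{\sim})}$, Theorem \ref{thm:relative-property-w} applies (with $B$ there replaced by $M_2(B)$ and the homomorphism there being $\Phi$), and since $\tau_{M_2(B),\omega}(p)=\tau_{M_2(B),\omega}(q)=1/2>0$ we obtain a partial isometry $V$ in $F(\Phi(\mathcal{D}),M_2(B))$ with $V^*V=p$ and $VV^*=q$. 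All the data $\Phi$, $p=[(\mathrm{diag}(s_n,0))_n]$, $q=[(\mathrm{diag}(0,t_n))_n]$ and the properties $[(s_n)_n]=1$ in $F(\varphi(\mathcal{D}),B)$, $[(t_n)_n]=1$ in $F(\psi(\mathcal{D}),B)$ have already been set up before the statement.

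Next I would extract an off-diagonal representative. Pick a contractive representative $(W_n)_n\in M_2(B)^{\omega}\cap\Phi(\mathcal{D})^{\prime}$ of $V$ and set $v_n:=(W_n)_{21}\in B$. Using $V=qVp$ together with the facts that $(s_n)_n$ acts as a unit on $\varphi(\mathcal{D})$, that $(t_n)_n$ acts as a unit on $\psi(\mathcal{D})$, and that $(W_n)_n$ commutes with $\Phi(\mathcal{D})$, one checks $V=[(\left(\begin{smallmatrix}0&0\\ v_n&0\end{smallmatrix}\right))_n]$ in $F(\Phi(\mathcal{D}),M_2(B))$. Now the relations $V^*V=p$, $VV^*=q$ and $[V,\Phi(a)]=0$, read off entrywise and combined with the local-unit identities $(s_n\varphi_n(a))_n=(\varphi_n(a))_n$ and $(t_n\psi_n(a))_n=(\psi_n(a))_n$, give the honest equalities in $B^{\omega}$
\[
(v_n\varphi_n(a))_n=(\psi_n(a)v_n)_n,\quad (v_n^*v_n\varphi_n(a))_n=(\varphi_n(a))_n,\quad (v_nv_n^*\psi_n(a))_n=(\psi_n(a))_n
\]
for every $a\in\mathcal{D}$; combining the first with the adjoint form of the third yields $(v_n\varphi_n(a)v_n^*)_n=(\psi_n(a))_n$. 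Then, exactly as in the proof of Theorem \ref{thm:uniqueness-i}, using $B\subseteq\overline{\mathrm{GL}(B^{\sim})}$ (cf.\ the proof of Proposition \ref{pro:non-sep-gl}) I may assume $v_n\in\mathrm{GL}(B^{\sim})$, put $u_n:=v_n(v_n^*v_n)^{-1/2}\in\mathcal{U}(B^{\sim})$, and verify by routine functional calculus (using $(v_n^*v_n\varphi_n(a))_n=(\varphi_n(a))_n$ and $(v_nv_n^*\psi_n(a))_n=(\psi_n(a))_n$) that $(u_n\varphi_n(a)u_n^*)_n=(v_n\varphi_n(a)v_n^*)_n=(\psi_n(a))_n$. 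Hence $[(u_n)_n]$ is a unitary in $(B^{\sim})^{\omega}$ conjugating $\varphi$ to $\psi$, and $u:=[(u_n^*)_n]$ satisfies $\varphi(a)=u\psi(a)u^*$ for all $a\in\mathcal{D}$.

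The step I expect to require the most care is the passage from the identities $V^*V=p$, $VV^*=q$, $[V,\Phi(a)]=0$ \emph{in the quotient} $F(\Phi(\mathcal{D}),M_2(B))$ to the three \emph{exact} identities on the sequences $(v_n)_n$ in $B^{\omega}$; this is where the specific choice $v_n=(W_n)_{21}$ and the local-unit property of $(s_n)_n$, $(t_n)_n$ are essential, since they convert ``$\bullet\,x$ lies in $\mathrm{Ann}$ for all $x$'' into genuine equalities. The subsequent polar-decomposition manipulations are standard and are the routine content of ``a similar argument as in the proof of Theorem \ref{thm:uniqueness-i}''.
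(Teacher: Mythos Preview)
Your proposal is correct and follows exactly the approach the paper intends: the paper sets up $\Phi$, $p$, $q$, invokes Theorem~\ref{thm:relative-property-w} (with $M_2(B)$ in place of $B$) to obtain $p\sim q$, and then simply says the conclusion follows ``by a similar argument as in the proof of Theorem~\ref{thm:uniqueness-i}''---you have faithfully unpacked that similar argument. Your caution about the passage from the quotient identities to the exact $B^\omega$ identities is well placed (one uses, as you note, that $(s_n)_n$ and $(t_n)_n$ act as local units on $\varphi(\mathcal{D})$ and $\psi(\mathcal{D})$, plus the entrywise commutation to kill the unwanted matrix entries of a lift of $V$), but nothing beyond what the proof of Theorem~\ref{thm:uniqueness-i} already does is required.
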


The following corollary is an immediate consequence of the theorem above. 

\begin{cor}
Let $\mathcal{D}$ be a simple separable nuclear monotracial $M_{2^{\infty}}$-stable 
C$^*$-algebra which is $KK$-equivalent to $\{0\}$ and  $B$ a simple monotracial C$^*$-algebra 
with strict comparison, $B\subseteq \overline{\mathrm{GL}(B^{\sim})}$ 
and $M_2(B)\subseteq \overline{\mathrm{GL}(M_2(B)^{\sim})}$. 
If $\varphi$ and $\psi$ are trace preserving homomorphisms from $\mathcal{D}$ to $B$, 
then $\varphi$ is approximately unitarily equivalent to $\psi$.
\end{cor}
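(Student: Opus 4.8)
The plan is to deduce this corollary from Theorem \ref{thm:uniqueness-ii} together with a standard reindexing/intertwining argument. The hypotheses on $\mathcal{D}$ and $B$ are exactly those of Theorem \ref{thm:uniqueness-ii}, and trace preserving homomorphisms $\varphi,\psi : \mathcal{D}\to B$ induce, via the inclusion $B\subseteq B^{\omega}$ as constant sequences, homomorphisms $\iota\circ\varphi, \iota\circ\psi : \mathcal{D}\to B^{\omega}$ satisfying $\tau_{\mathcal{D}}=\tau_{B,\omega}\circ(\iota\circ\varphi)=\tau_{B,\omega}\circ(\iota\circ\psi)$, since $\tau_{B,\omega}$ restricts to $\tau_{B}$ on the constant sequences. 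So Theorem \ref{thm:uniqueness-ii} applies and yields a unitary $u=(u_n)_n$ in $(B^{\sim})^{\omega}$ with $\varphi(a)=u\psi(a)u^*$ in $B^{\omega}$ for every $a\in\mathcal{D}$.

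Next I would unwind what this identity means at the level of representatives. Writing $u=(u_n)_n$ with each $u_n$ a unitary in $B^{\sim}$, the equality $\varphi(a)=u\psi(a)u^*$ in $B^{\omega}$ says precisely that $\lim_{n\to\omega}\|\varphi(a)-u_n\psi(a)u_n^*\|=0$ for each $a\in\mathcal{D}$ (here $\varphi(a),\psi(a)\in B$ are viewed as constant sequences). Since $\mathcal{D}$ is separable, fix a dense sequence $\{a_k\}_{k\in\mathbb{N}}$ in $\mathcal{D}$; for each $m\in\mathbb{N}$ the set $\{n : \|\varphi(a_k)-u_n\psi(a_k)u_n^*\|<1/m \text{ for } 1\le k\le m\}$ belongs to $\omega$, hence is non-empty, so we may pick a single index $n(m)$ in it. Then $w_m:=u_{n(m)}$ is a unitary in $B^{\sim}$ with $\|\varphi(a_k)-w_m\psi(a_k)w_m^*\|<1/m$ for all $k\le m$. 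Given any $a\in\mathcal{D}$ and $\varepsilon>0$, choose $a_k$ with $\|a-a_k\|<\varepsilon/3$; then for $m$ large (with $m\ge k$ and $1/m<\varepsilon/3$) we get $\|\varphi(a)-w_m\psi(a)w_m^*\|\le \|\varphi(a-a_k)\|+\|\varphi(a_k)-w_m\psi(a_k)w_m^*\|+\|w_m\psi(a_k-a)w_m^*\|<\varepsilon$, since $\varphi$ and $\psi$ are contractive (homomorphisms of C$^*$-algebras). Thus $\varphi$ is approximately unitarily equivalent to $\psi$ by unitaries in $B^{\sim}$, which is the conclusion.

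There is no real obstacle here; the only point requiring minor care is that the unitaries implementing the equivalence live in the unitization $B^{\sim}$ rather than in $B$ itself, which is unavoidable since $B$ is non-unital in the cases of interest (e.g.\ $B=\mathcal{W}$), and this is exactly the meaning of "approximately unitarily equivalent" in the stably projectionless setting. One should also remark that if $\mathcal{D}$ happens to be nuclear and the ambient algebra $B$ is as above, the passage from the $B^{\omega}$-statement to the approximate statement is the usual diagonal/reindexation trick and uses nothing beyond separability of $\mathcal{D}$ and the definition of the ultrapower norm. Hence the corollary follows immediately from Theorem \ref{thm:uniqueness-ii}.
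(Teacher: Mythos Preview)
Your proof is correct and is precisely the standard argument the paper has in mind when it declares the corollary ``an immediate consequence'' of Theorem~\ref{thm:uniqueness-ii}; the only implicit step you use---that a unitary in $(B^{\sim})^{\omega}$ can be represented by a sequence of unitaries in $B^{\sim}$---is the usual polar-decomposition lifting for ultrapowers of unital C$^*$-algebras.
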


\begin{rem}
If $B$ is a simple separable exact monotracial $\mathcal{Z}$-stable C$^*$-algebra, then 
$B$ has strict comparison, $B\subseteq \overline{\mathrm{GL}(B^{\sim})}$ and 
$M_2(B)\subseteq \overline{\mathrm{GL}(M_2(B)^{\sim})}$ by \cite{Ror} and \cite{Rob2}. 
\end{rem}

The following corollary is also an immediate consequence of Theorem \ref{thm:uniqueness-ii}. 

\begin{cor}\label{cor:uniqueness-iii}
Let $\mathcal{D}$ be a simple separable nuclear monotracial $M_{2^{\infty}}$-stable 
C$^*$-algebra which is $KK$-equivalent to $\{0\}$ and  $B$ a simple monotracial C$^*$-algebra 
with strict comparison, $B\subseteq \overline{\mathrm{GL}(B^{\sim})}$ 
and $M_2(B)\subseteq \overline{\mathrm{GL}(M_2(B)^{\sim})}$. 
For any finite subset $F\subset \mathcal{D}$ and $\varepsilon>0$, there exist a finite subset 
$G\subset \mathcal{D}$ and $\delta>0$ such that the following holds. 
If $\varphi$ and $\psi$ are $(G, \delta)$-multiplicative maps from $\mathcal{D}$ to 
$B$ such that 
$$
|\tau_{B}(\varphi (a))-\tau_{\mathcal{D}}(a)|< \delta \quad \text{and} \quad 
|\tau_{B}(\psi (a))-\tau_{\mathcal{D}}(a)|< \delta 
$$
for any $a\in G$, then there exists a unitary element $u$ in $B^{\sim}$ such that 
$$
\| \varphi (a) - u \psi (a) u^*\|< \varepsilon
$$
for any $a\in F$. 
\end{cor}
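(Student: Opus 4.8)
The plan is to derive Corollary \ref{cor:uniqueness-iii} from Theorem \ref{thm:uniqueness-ii} by a standard reindexation (``$\varepsilon$--$\delta$ versus ultrapower'') argument. Suppose, for contradiction, that the statement fails. Then there exist a finite subset $F\subset\mathcal{D}$ and $\varepsilon>0$ such that for every finite subset $G\subset\mathcal{D}$ and every $\delta>0$ the conclusion fails. Fix an increasing sequence $\{G_m\}_{m\in\mathbb{N}}$ of finite subsets of $\mathcal{D}$ with $\mathcal{D}=\overline{\bigcup_m G_m}$, and apply the failure with $G=G_m$, $\delta=1/m$. This produces, for each $m$, $(G_m,1/m)$-multiplicative maps $\varphi^{(m)},\psi^{(m)}:\mathcal{D}\to B$ with $|\tau_B(\varphi^{(m)}(a))-\tau_{\mathcal{D}}(a)|<1/m$ and $|\tau_B(\psi^{(m)}(a))-\tau_{\mathcal{D}}(a)|<1/m$ for all $a\in G_m$, yet with no unitary $u\in B^{\sim}$ satisfying $\|\varphi^{(m)}(a)-u\psi^{(m)}(a)u^*\|<\varepsilon$ for all $a\in F$.

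First I would assemble these into the ultrapower. Define $\varphi,\psi:\mathcal{D}\to B^{\omega}$ by $\varphi(a):=(\varphi^{(m)}(a))_m$ and $\psi(a):=(\psi^{(m)}(a))_m$. The $(G_m,1/m)$-multiplicativity together with $\mathcal{D}=\overline{\bigcup_m G_m}$ forces $\lim_{m\to\omega}\|\varphi^{(m)}(ab)-\varphi^{(m)}(a)\varphi^{(m)}(b)\|=0$ for all $a,b\in\mathcal{D}$ (a standard approximation argument: given $a,b$ and $\eta>0$, pick $a',b'\in\bigcup_m G_m$ within $\eta$ and use contractivity), so $\varphi$ is a genuine homomorphism, and likewise $\psi$. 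The trace estimates give $\tau_{B,\omega}(\varphi(a))=\lim_{m\to\omega}\tau_B(\varphi^{(m)}(a))=\tau_{\mathcal{D}}(a)$ for all $a$, and similarly for $\psi$; hence $\tau_{\mathcal{D}}=\tau_{B,\omega}\circ\varphi=\tau_{B,\omega}\circ\psi$. Now Theorem \ref{thm:uniqueness-ii} applies and yields a unitary $u=(u_m)_m\in(B^{\sim})^{\omega}$ with $\varphi(a)=u\psi(a)u^*$ for all $a\in\mathcal{D}$. In particular $\lim_{m\to\omega}\|\varphi^{(m)}(a)-u_m\psi^{(m)}(a)u_m^*\|=0$ for every $a\in F$; since $F$ is finite, the set of $m$ for which $\|\varphi^{(m)}(a)-u_m\psi^{(m)}(a)u_m^*\|<\varepsilon$ for all $a\in F$ simultaneously lies in $\omega$, hence is non-empty. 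Choosing such an $m$ and the corresponding unitary $u_m\in B^{\sim}$ contradicts the construction of $\varphi^{(m)},\psi^{(m)}$.

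I do not expect a genuine obstacle here; the content is entirely in Theorem \ref{thm:uniqueness-ii}, and what remains is the routine packaging. The one point requiring a little care is the lift of the abstract unitary $u\in(B^{\sim})^{\omega}$ to a sequence $(u_m)_m$ of honest unitaries in $B^{\sim}$: an element of $(B^{\sim})^{\omega}$ that is unitary need not have a representative consisting of unitaries, but one can replace an approximately unitary lift $(w_m)_m$ by $u_m:=w_m(w_m^*w_m)^{-1/2}$ for those $m$ where $w_m$ is invertible (a set in $\omega$), and by $1_{B^{\sim}}$ otherwise, exactly as in the proof of Proposition \ref{pro:non-sep-gl}; this changes $(u_m)_m$ only on a set outside $\omega$, so the identity $\varphi(a)=u\psi(a)u^*$ in $B^{\omega}$ is unaffected. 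With that observation in place the argument closes.
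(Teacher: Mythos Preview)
Your proposal is correct and is precisely the standard reindexation argument the paper has in mind when it says the corollary is ``an immediate consequence of Theorem~\ref{thm:uniqueness-ii}''; the paper gives no further details. Your care about lifting the unitary in $(B^{\sim})^{\omega}$ to a sequence of genuine unitaries in $B^{\sim}$ is warranted and handled correctly (and mirrors the trick in Proposition~\ref{pro:non-sep-gl}).
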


\section{Existence theorem}\label{sec:existence}

In this section, we assume that $\mathcal{D}$ is a simple separable nuclear monotracial 
$M_{2^{\infty}}$-stable C$^*$-algebra which is $KK$-equivalent to $\{0\}$ and 
$B$ is a simple separable exact monotracial $\mathcal{Z}$-stable C$^*$-algebra. 
We shall show that there exists a trace preserving homomorphism from 
$\mathcal{D}$ to $B$. Many arguments in this section are motivated by 
Schafhauser's proof \cite{Sc} (see also \cite{Sc2}) of 
the Tikuisis-White-Winter theorem \cite{TWW}. 

The following lemma is related to \cite[Lemma 2.2]{KP}. 

\begin{lem}\label{lem:kp}
Let $\mathcal{D}$ be a simple separable nuclear monotracial $M_{2^{\infty}}$-stable 
C$^*$-algebra which is $KK$-equivalent to $\{0\}$ and $B$ a simple separable exact 
monotracial $\mathcal{Z}$-stable C$^*$-algebra. 
If there exists a homomorphism $\varphi$ from $\mathcal{D}$ to $B^{\omega}$ such that 
$\tau_{B, \omega}\circ \varphi = \tau_{\mathcal{D}}$, then there exists a trace preserving 
homomorphism from $\mathcal{D}$ to $B$. 
\end{lem}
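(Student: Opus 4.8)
The plan is to reduce the statement to a diagonal sequence argument. Suppose $\varphi\colon\mathcal{D}\to B^{\omega}$ is a homomorphism with $\tau_{B,\omega}\circ\varphi=\tau_{\mathcal{D}}$. By the Choi-Effros lifting theorem (using that $\mathcal{D}$ is nuclear, hence exact and so has the completely positive approximation property), there is a sequence of contractive c.p.\ maps $\varphi_n\colon\mathcal{D}\to B$ lifting $\varphi$, so that $\varphi(a)=(\varphi_n(a))_n$ for all $a\in\mathcal{D}$. The asymptotic properties $\|\varphi_n(ab)-\varphi_n(a)\varphi_n(b)\|\to_{\omega}0$ and $|\tau_B(\varphi_n(a))-\tau_{\mathcal{D}}(a)|\to_{\omega}0$ hold for every $a,b\in\mathcal{D}$. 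Fixing an increasing sequence of finite subsets $F_1\subset F_2\subset\cdots$ with dense union in $\mathcal{D}$, for each $m$ the set of $n$ for which $\varphi_n$ is $(F_m,1/m)$-multiplicative and $|\tau_B(\varphi_n(a))-\tau_{\mathcal{D}}(a)|<1/m$ on $F_m$ lies in $\omega$; a diagonal choice then yields a single sequence of contractive c.p.\ maps $\mathcal{D}\to B$ that are asymptotically multiplicative and asymptotically trace preserving. In other words, we may assume $\varphi$ itself is already induced by maps whose defects we control.

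The next step is to upgrade this to a genuine homomorphism by an intertwining argument, and here I would invoke Corollary \ref{cor:uniqueness-iii}. Since $B$ is a simple separable exact monotracial $\mathcal{Z}$-stable C$^*$-algebra, by the Remark following Corollary \ref{cor:uniqueness-iii} it satisfies the hypotheses $B\subseteq\overline{\mathrm{GL}(B^{\sim})}$ and $M_2(B)\subseteq\overline{\mathrm{GL}(M_2(B)^{\sim})}$ required there, so any two sufficiently-multiplicative, approximately-trace-preserving maps $\mathcal{D}\to B$ are approximately unitarily equivalent on a prescribed finite set to within a prescribed tolerance. The idea is to run a one-sided Elliott-type intertwining: starting from the diagonal sequence $(\psi_n)$ of contractive c.p.\ maps obtained above, successively conjugate $\psi_{n+1}$ by a unitary $u_n\in B^{\sim}$ so that it agrees with $\psi_n$ to within $2^{-n}$ on $F_n$, using Corollary \ref{cor:uniqueness-iii} with $\varphi=\psi_n$, $\psi=\psi_{n+1}$ (legitimate once we have passed to a tail where the multiplicativity and trace defects are below the $\delta$ coming out of the corollary for the data $(F_n,2^{-n})$). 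Then $w_n:=u_1u_2\cdots u_n$ is a Cauchy sequence of conjugators and $\rho(a):=\lim_n w_n^*\psi_n(a)w_n$ — working in $B^{\sim}$ and noting $\rho(a)\in B$ since each $\psi_n(a)\in B$ — defines a $*$-homomorphism $\rho\colon\mathcal{D}\to B$. It is trace preserving because $|\tau_B(\psi_n(a))-\tau_{\mathcal{D}}(a)|\to0$ and $\tau_B$ is unitarily invariant.

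A couple of bookkeeping points need care. First, the unitaries produced by Corollary \ref{cor:uniqueness-iii} live in $B^{\sim}$ rather than $B$, so the conjugation has to be carried out in the unitization; this is harmless since $B$ is an ideal in $B^{\sim}$, so $w_n^*\psi_n(a)w_n\in B$ throughout and the limit stays in $B$. Second, one must check that the limit map $\rho$ is nonzero and in fact injective: injectivity is automatic because $\mathcal{D}$ is simple, provided $\rho\neq0$, and $\rho\neq0$ follows from trace preservation (if $\rho=0$ then $\tau_{\mathcal{D}}(a^*a)=\tau_B(\rho(a^*a))=0$ for all $a$, contradicting faithfulness of $\tau_{\mathcal{D}}$ on the simple $\mathcal{D}$).

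\textbf{Main obstacle.} The principal subtlety is making the one-sided intertwining converge cleanly: Corollary \ref{cor:uniqueness-iii} only provides approximate unitary equivalence of the \emph{contractive c.p.\ lifts}, not of the homomorphism $\varphi$ itself, so at each stage one is conjugating finitely-approximately-multiplicative maps and must ensure the accumulated errors (both the $2^{-n}$ approximation defects and the multiplicativity/trace defects of $\psi_n$) are summable and compatible with the $\delta$ demanded by the corollary at step $n$. This is arranged by first passing to a fast enough subsequence so that at stage $n$ the defects of $\psi_n$ are already below $\delta_n$, where $\delta_n$ is the output of Corollary \ref{cor:uniqueness-iii} applied to $(F_n,2^{-n})$; then the telescoping $\sum_n 2^{-n}<\infty$ makes $(w_n^*\psi_n(a)w_n)_n$ norm-Cauchy for each fixed $a$ in the dense union, and a standard $3\varepsilon$-argument extends the limit to all of $\mathcal{D}$ and verifies multiplicativity of $\rho$.
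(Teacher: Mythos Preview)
Your proposal is correct and follows essentially the same route as the paper: lift $\varphi$ to contractive c.p.\ maps via Choi--Effros, pass to a subsequence whose multiplicativity and trace defects are controlled by the $(G_m,\delta_m)$ produced by Corollary~\ref{cor:uniqueness-iii} for $(F_m,2^{-m})$, use that corollary to obtain intertwining unitaries in $B^{\sim}$, and take the telescoping limit. One cosmetic point: with your convention $u_n\psi_{n+1}(a)u_n^*\approx\psi_n(a)$ and $w_n=u_1\cdots u_n$, the Cauchy sequence is $w_{n-1}\psi_n(a)w_{n-1}^*$ (as the paper writes) rather than $w_n^*\psi_n(a)w_n$; this is a harmless indexing slip and does not affect the argument.
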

\begin{proof}
By the Choi-Effros lifting theorem, there exists a sequence $\{\varphi_n\}_{n\in\mathbb{N}}$ 
of contractive c.p. maps from $\mathcal{D}$ to $B$ such that $\varphi (a)=(\varphi_n(a))_n$ 
for any $a\in\mathcal{D}$. 
Let $\{F_{m}\}_{m\in\mathbb{N}}$ be an increasing 
sequence of finite subsets in $\mathcal{D}$ such that $\mathcal{D}=
\overline{\bigcup_{m\in\mathbb{N}}F_{m}}$. 
For any $m\in\mathbb{N}$, applying Corollary \ref{cor:uniqueness-iii} to 
$F_{m}$ and $1/2^{m}$, we obtain a finite subset $G_m$ of $\mathcal{D}$ and $\delta_{m}>0$. 
We may assume that $G_{m}\subset G_{m+1}$, $\delta_{m}>\delta_{m+1}$ for any 
$m\in\mathbb{N}$ and $\lim_{m\to \infty}\delta_{m}=0$. Since we have 
$$
\lim_{n\to\omega} \| \varphi_n(ab)-\varphi_n(a)\varphi_n(b)\|=0 \quad \text{and} \quad 
\lim_{n\to\omega} |\tau_{B}(\varphi_n(a)) - \tau_{\mathcal{D}}(a)|=0
$$
for any $a,b\in\mathcal{D}$, there exists a subsequence $\{\varphi_{n(m)}\}_{m\in\mathbb{N}}$ 
of $\{\varphi_n\}_{n\in\mathbb{N}}$ such that 
$$
\|\varphi_{n(m)}(ab) -\varphi_{n(m)}(a)\varphi_{n(m)}(b) \|< \delta_{m} \quad \text{and} \quad 
|\tau_{B}(\varphi_{n(m)}(a))-\tau_{\mathcal{D}}(a) | < \delta_{m}
$$
for any $a, b\in G_{m}$. Corollary \ref{cor:uniqueness-iii} implies that for any $m\in\mathbb{N}$, 
there exists a unitary element $u_{m}$ in $B^{\sim}$ such that 
$$
\| \varphi_{n(m)}(a)- u_{m}\varphi_{n(m+1)}(a)u_{m}^* \|< \frac{1}{2^m}
$$
for any $a\in F_m$.  Therefore it can easily be checked that the limit 
$$
\lim_{m\to \infty} u_{1}u_{2}\cdots u_{m-1}\varphi_{n(m)}(a)u_{m-1}^*\cdots u_{2}^*u_{1}^*
$$
exists for any $a\in \mathcal{D}$. 
Define 
$\psi(a):=\lim_{m\to \infty} u_{1}u_{2}\cdots u_{m-1}\varphi_{n(m)}(a)u_{m-1}^*\cdots u_{2}^*u_{1}^*$ 
for any $a\in\mathcal{D}$, then $\psi$ is a trace preserving homomorphism from $\mathcal{D}$ 
to $B$. 
\end{proof}

By the lemma above, it is enough to show that there exists a homomorphism $\varphi$ from 
$\mathcal{D}$ to $B^{\omega}$ such that $\tau_{B, \omega}\circ \varphi = \tau_{\mathcal{D}}$. 
Borrowing Schafhauser's idea in \cite{Sc}, we shall show this. 
By arguments in Section \ref{sec:MS}, there exists the following extension: 
$$
\xymatrix{
\eta: & 0 \ar[r]  & J \ar[r] & B^{\omega} \ar[r]^{\varrho} & M \ar[r] & 0
}
$$
where $M$ is a von Neumann algebraic ultrapower of $\pi_{\tau_B}(B)^{''}$ and 
$J=\mathrm{ker}\; \varrho=\{(x_n)_n\in B^{\omega}\; |\; 
\tilde{\tau}_{B, \omega}((x_n^*x_n)_n)=0 \}$. 
Note that $J$ is known as the trace kernel ideal. 
Also,  
$M$ is a II$_1$ factor because $B$ is infinite-dimensional 
(which is implied by $\mathcal{Z}$-stability) 
and monotracial. Since $\mathcal{D}$ is monotracial and nuclear, 
$\pi_{\tau_{\mathcal{D}}}(\mathcal{D})^{''}$ is the injective II$_1$ factor. 
Hence there exists a unital homomorphism from 
$\pi_{\tau_{\mathcal{D}}}(\mathcal{D})^{''}$ to $M$ (see, for example, \cite[XIV. Proposition 2.15]{Tak}). 
In particular, there exists a trace preserving homomorphism $\Pi$ from $\mathcal{D}$ to $M$. 
Consider the pullback extension 
$$
\xymatrix{
\Pi^*\eta: &   0 \ar[r] 
& J \ar[r]\ar@{=}[d] & E \ar[r]^{\hat{\varrho}}\ar[d]^{\hat{\Pi}} 
& \mathcal{D} \ar[r]\ar[d]^{\Pi} & 0 \\
\eta: & 0 \ar[r]  & J \ar[r] & B^{\omega} \ar[r]^{\varrho} & M \ar[r] & 0
}
$$
where $E=\{(a,x)\in \mathcal{D}\oplus B^{\omega}\; |\; \Pi(a)=\varrho(x)\}$, 
$\hat{\varrho}((a,x))=a$ and $\hat{\Pi}((a,x))=x$ for any $(a,x)\in E$. 
If we could show that $\Pi^*\eta$ is a split extension with a cross section $\gamma$, 
then $\hat{\Pi}\circ \gamma$ is a homomorphism from $\mathcal{D}$ to 
$B^{\omega}$ such that $\tau_{B, \omega}\circ \hat{\Pi}\circ \gamma=\tau_{\mathcal{D}}$. 
But we were unable to show this, immediately. 
Note that we need to consider a separable extension in order 
to use $KK$-theory and some results in \cite{EllK} and \cite{G}. 
We shall construct a suitable separable extension $\eta_0$ by Blackadar's technique 
(see \cite[II.8.5]{Bla}). 

We shall recall some definitions and some results in \cite{EllK} and \cite{G}. 
An extension $0\longrightarrow I \longrightarrow C \longrightarrow A\longrightarrow 0$ 
is said to be \textit{purely large} if for any $x\in C\setminus  I$, 
$\overline{xIx^*}$ contains a stable C$^*$-subalgebra which is full in $I$. Note that 
$\overline{xIx^*}=\overline{xx^*Ixx^*}=I\cap \overline{xCx^*}$. 
By \cite[Theorem 2.1]{G} (see also \cite[Corollary 16]{EllK}), if $A$ is non-unital and 
$I$ is stable, then a separable extension 
$0\longrightarrow I \longrightarrow C \longrightarrow A\longrightarrow 0$
is nuclear absorbing if and only if it is purely large. 

\begin{lem}\label{lem:qd}
With notation as above, suppose that there exist separable C$^*$-subalgebras 
$J_{0}\subset J$, $B_{0}\subset B^{\omega}$ and $M_{0}\subset M$ 
such that $J_0$ is stable, 
$$
\xymatrix{
\eta_0: & 0 \ar[r]  & J_0 \ar[r] & B_0 \ar[r]^{\varrho|_{B_0}} & M_0 \ar[r] & 0
}
$$
is a purely large extension and $\Pi (\mathcal{D})\subset M_0$. Then 
there exists a homomorphism $\varphi$ from $\mathcal{D}$ to $B^{\omega}$ such that 
$\tau_{B, \omega}\circ \varphi=\tau_{\mathcal{D}}$. 
\end{lem}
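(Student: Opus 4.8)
The plan is to run Schafhauser's argument on the separable piece $\eta_0$ and then push the resulting splitting back up into $B^\omega$. First I would observe that $\mathcal{D}$ is non-unital (it is $KK$-equivalent to $\{0\}$, hence stably projectionless) and nuclear, while $J_0$ is stable by hypothesis; therefore, since $\eta_0$ is purely large, \cite[Theorem 2.1]{G} (or \cite[Corollary 16]{EllK}) shows that $\eta_0$ is nuclearly absorbing. In particular $\eta_0$ is absorbing for the trivial extension, so its class $[\eta_0]$ is the neutral element of $\mathrm{Ext}(\mathcal{D}, J_0)$ in the appropriate sense; equivalently, the Busby map of $\eta_0$ is split by a unital$^*$-homomorphism up to a trivial summand. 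Because $\mathcal{D}$ is $KK$-equivalent to $\{0\}$, we have $KK(\mathcal{D}, J_0)=0$, so the extension $\eta_0$ is stably trivial, and combining stable triviality with nuclear absorption gives that $\eta_0$ is itself a \emph{split} extension: there is a $^*$-homomorphism $\gamma_0\colon \mathcal{D}\to B_0$ with $(\varrho|_{B_0})\circ\gamma_0 = \Pi$ (using $\Pi(\mathcal{D})\subset M_0$ and, if necessary, restricting the quotient map to the separable subalgebra $M_0$).

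Next I would compose $\gamma_0$ with the inclusion $B_0\hookrightarrow B^\omega$ to obtain a $^*$-homomorphism $\varphi:=\iota\circ\gamma_0\colon \mathcal{D}\to B^\omega$. It remains to check the trace condition. For $a\in\mathcal{D}_+$ we compute $\tau_{B,\omega}(\varphi(a))$; by the discussion in Section \ref{sec:MS}, $\tau_{B,\omega}$ factors through $\varrho$ as $\tilde\tau_{B,\omega}\circ\varrho$, and since $\varrho\circ\varphi = \varrho|_{B_0}\circ\gamma_0 = \Pi$, we get $\tau_{B,\omega}(\varphi(a)) = \tilde\tau_{B,\omega}(\Pi(a)) = \tau_{\mathcal{D}}(a)$, the last equality because $\Pi$ is trace preserving. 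Hence $\tau_{B,\omega}\circ\varphi = \tau_{\mathcal{D}}$, which is exactly the conclusion. (Equivalently, one can phrase this via the pullback diagram: the splitting $\gamma_0$ of $\eta_0$, together with the compatible inclusions $J_0\subset J$, $B_0\subset B^\omega$, $M_0\subset M$, $\Pi(\mathcal{D})\subset M_0$, induces a section of $\Pi^*\eta$ landing in the separable part, and then $\hat\Pi\circ\gamma$ is the desired $\varphi$.)

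The main obstacle I expect is the passage from ``nuclearly absorbing plus $KK$-trivial'' to ``split.'' One has to be careful about which version of absorption and which $\mathrm{Ext}$/$KK$ group is in play, since $J_0$ is merely separable and stable (not, say, $\sigma$-unital stable with nice $K$-theory), and $M_0$ is a separable subalgebra of a $\mathrm{II}_1$ factor rather than a $C^*$-algebra of a familiar type. The clean route is: purely large $\Rightarrow$ nuclearly absorbing (by \cite{G}), absorbing extensions of a fixed $KK$-class are unique up to unitary equivalence by Elliott–Kucerovsky with Gabe's correction, and $KK(\mathcal{D},J_0)=0$ forces that class to be $0$, whose representative is the trivial (split) extension; so $\eta_0$ is unitarily equivalent to a split extension, and unitary equivalence transports the splitting. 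I would make sure the hypotheses of \cite[Theorem 2.1]{G} are literally met here — $\mathcal{D}$ non-unital, $J_0$ stable, extension separable and purely large — all of which are in hand.
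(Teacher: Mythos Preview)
Your overall strategy is the paper's, and your trace computation at the end is fine, but there is a genuine mix-up in the middle: you repeatedly apply the absorption/$KK$ argument to $\eta_0$, whose quotient is $M_0$, while invoking $KK(\mathcal{D},J_0)=0$. The class $[\eta_0]$ lives in $\mathrm{Ext}(M_0,J_0)$, not in $\mathrm{Ext}(\mathcal{D},J_0)$, and you have no control over the $K$-theory of the separable algebra $M_0$; so you cannot conclude that $\eta_0$ splits, and in fact you do not need it to. What you actually want is a lift of $\Pi:\mathcal{D}\to M_0$ through $\varrho|_{B_0}$, which is exactly the statement that the \emph{pullback} extension $\Pi^*\eta_0$ (of $\mathcal{D}$ by $J_0$) splits. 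Your own hypothesis check at the end (``$\mathcal{D}$ non-unital, $J_0$ stable, \ldots'') is the hypothesis list for \cite[Theorem~2.1]{G} applied to $\Pi^*\eta_0$, not to $\eta_0$.

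The paper runs precisely this corrected version. It forms $\Pi^*\eta_0$, observes that it inherits purely largeness from $\eta_0$ (if $(a,x)\in E_0\setminus J_0$ then $x\in B_0\setminus J_0$ and $\overline{(a,x)J_0(a,x)^*}=\overline{xJ_0x^*}$, so the stable full subalgebra provided by $\eta_0$ works), applies \cite[Theorem~2.1]{G} to conclude $\Pi^*\eta_0$ is nuclear absorbing, and then uses $\mathrm{Ext}(\mathcal{D},J_0)=\{0\}$ to find a nuclear split $\eta'$ with $\Pi^*\eta_0\oplus\eta'$ split; absorption gives $\Pi^*\eta_0\cong\Pi^*\eta_0\oplus\eta'$, hence $\Pi^*\eta_0$ splits. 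Taking a section $\gamma_0:\mathcal{D}\to E_0$ and setting $\varphi:=\hat\Pi\circ\gamma_0$ finishes exactly as you wrote. The only missing ingredients in your write-up are (i) passing to the pullback before invoking Gabe/Elliott--Kucerovsky, and (ii) the one-line check that pullbacks along $\Pi$ preserve purely largeness.
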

\begin{proof}
Consider the pullback extension 
$$
\xymatrix{
\Pi^*\eta_0: &   0 \ar[r] 
& J_0 \ar[r]\ar@{=}[d] & E_0 \ar[r]^{\hat{\varrho}}\ar[d]^{\hat{\Pi}} 
& \mathcal{D} \ar[r]\ar[d]^{\Pi} & 0 \\
\eta_0: & 0 \ar[r]  & J_0 \ar[r] & B_0 \ar[r]^{\varrho} & M_0 \ar[r] & 0
}
$$
where $E_0=\{(a,x)\in \mathcal{D}\oplus B_0\; |\; \Pi(a)=\varrho(x)\}$, 
$\hat{\varrho}((a,x))=a$ and $\hat{\Pi}((a,x))=x$ for any $(a,x)\in E_0$. 
Since $\eta_0$ is purely large, it can be easily checked that $\Pi^*\eta_0$ is purely large. 
Hence $\Pi^*\eta_0$ is nuclear absorbing by \cite[Theorem 2.1]{G}. 
Because $\mathcal{D}$ is $KK$-equivalent to $\{0\}$ and nuclear, we have 
$\mathrm{Ext}(\mathcal{D}, J_0)=\{0\}$, and hence $[\Pi^*\eta_0]=0$ in 
$\mathrm{Ext}(\mathcal{D}, J_0)$. 
Therefore there exists a (nuclear) split extension $\eta^{\prime}$ such that 
$\Pi^*\eta_0\oplus \eta^{\prime}$ is a split extension. 
Since $\Pi^*\eta_0$ is nuclear absorbing, $\Pi^*\eta_0$ is strongly unitarily 
equivalent to $\Pi^*\eta_0\oplus \eta^{\prime}$, and hence  $\Pi^*\eta_0$ is a split extension. 
Let $\gamma_0$ be a cross section of  $\Pi^*\eta_0$, and define 
$\varphi:=\hat{\Pi}\circ \gamma_0$. Then $\varphi$ is the desired homomorphism. 
\end{proof}

A key result in the proof of purely largeness is the following 
Hjelmborg and R\o rdam's characterization of stable C$^*$-algebras in \cite{HR} and \cite{Ror2}. 

\begin{thm} \label{thm:HR} (Hjelmborg-R\o rdam cf. \cite[Theorem 2.2]{Ror2}) \ \\
Let $A$ be a $\sigma$-unital C$^*$-algebra. Then $A$ is stable if and only if 
for any $a\in A_{+}$ and $\varepsilon>0$, there exist positive elements $a^{\prime}$ and $c$ in 
$A$ such that $\| a-a^{\prime}\|\leq \varepsilon$, $a^{\prime}\sim c$ and 
$\| ac\|\leq \varepsilon$.  
\end{thm}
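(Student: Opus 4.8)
The plan is to prove the two implications separately; only the converse is substantial, and it is the classical Hjelmborg--R\o rdam argument, which I would organise as follows.

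\emph{The direction ``$A$ stable $\Rightarrow$ the approximation property''.} Identify $A=A\otimes\mathbb{K}$, fix matrix units $(e_{ij})_{i,j\ge 1}$ for $\mathbb{K}$ and set $p_n=\sum_{i=1}^{n}e_{ii}$, so that $(1\otimes p_n)_n$ is an increasing approximate unit for $A$. Given $a\in A_{+}$ and $\varepsilon>0$, put $\delta=\min\{\varepsilon,\ \varepsilon/(1+\|a\|)\}$ and choose $n$ so large that $a':=(1\otimes p_n)\,a\,(1\otimes p_n)$ satisfies $\|a-a'\|\le\delta$; note $a'\in (1\otimes p_n)A(1\otimes p_n)\cong A\otimes M_n$ is positive. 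Let $t=\sum_{i=1}^{n}e_{i+n,\,i}\in\mathbb{K}$, a partial isometry with $t^{*}t=p_n$ and $tt^{*}=p_{2n}-p_n$, and set $c:=(1\otimes t)\,a'\,(1\otimes t^{*})\ge 0$. Then $z:=(1\otimes t)(a')^{1/2}$ satisfies $z^{*}z=(a')^{1/2}(1\otimes p_n)(a')^{1/2}=a'$ and $zz^{*}=c$, so $a'\sim c$. Since $a'$ and $c$ lie in the corners of the orthogonal projections $1\otimes p_n$ and $1\otimes(p_{2n}-p_n)$, we get $a'c=0$, hence $\|ac\|=\|(a-a')c\|\le\delta\|a'\|\le\delta(\|a\|+1)\le\varepsilon$, and also $\|a-a'\|\le\delta\le\varepsilon$. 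These are exactly the required elements.

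\emph{The direction ``the approximation property $\Rightarrow$ $A$ stable''.} Since $A$ is $\sigma$-unital, fix a strictly positive element $h\in A_{+}$ with $\|h\|\le 1$. The aim is to manufacture inside $A$ an internal ``matricial'' structure witnessing stability. I would build by induction positive contractions $b_0=h,b_1,b_2,\dots$ and elements $z_1,z_2,\dots$ in $A$ together with parameters $\eta_n\downarrow 0$, $\varepsilon_n\downarrow 0$ with $\sum_n\varepsilon_n<\infty$, so that at stage $n$ one applies the hypothesis to $\big(b_0+\dots+b_{n-1}-\eta_n\big)_{+}$ with tolerance $\varepsilon_n$ to obtain $z_n$ for which $z_n^{*}z_n$ is a small perturbation of a functional-calculus cutdown of $b_{n-1}$, $b_n:=z_nz_n^{*}$ is $\varepsilon_n$-orthogonal to $b_0+\dots+b_{n-1}$, and $\|b_n\|$ is controlled. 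One then invokes the standard perturbation lemmas for positive elements --- approximately orthogonal positive elements may be replaced by genuinely orthogonal nearby ones, and an approximate Murray--von Neumann chaining ($z_n^{*}z_n\approx$ cutdown of $b_{n-1}$, $z_nz_n^{*}=b_n$) may be straightened to an exact one on a suitable hereditary subalgebra, with summable total error --- to pass from the approximate data to genuine mutually orthogonal positive elements, all Murray--von Neumann equivalent to a common corner element, whose strict sum recovers a strictly positive element of $A$. By the standard internal characterisation of stability (a $\sigma$-unital $C^{*}$-algebra carrying such ``matrix units over a strictly positive element'' is isomorphic to $A_0\otimes\mathbb{K}$ for a corner $A_0$, hence stable), one concludes that $A$ is stable.

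The main obstacle is precisely this converse. Two points need care. First, the bookkeeping of the induction: the supports must be nested and the perturbation errors summable so that the limiting family is genuinely mutually orthogonal and genuinely chained; this is where Theorem~\ref{thm:HR}'s own criterion (applied to the hereditary subalgebras appearing at each stage) is used to certify stability of the limit object. Second, one cannot work with $h$ directly: a strictly positive element can never be orthogonal to a nonzero positive element, so the construction must proceed throughout with the proper subelements $(b_0+\dots+b_{n-1}-\eta_n)_{+}$, recovering $h$ only in the limit, and the approximate equivalences coming from the elements $z$ (with $z^{*}z=a'$, $zz^{*}=c$) must be interfaced correctly with these cutoffs. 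The forward implication and the algebraic identities above are routine.
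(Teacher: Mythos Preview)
The paper does not supply a proof of Theorem~\ref{thm:HR}; it is quoted as a result of Hjelmborg and R\o rdam (with the reference to \cite[Theorem~2.2]{Ror2}) and is used only as a tool in the proofs of stability of $J_0$ and of purely largeness of $\eta_0$. There is therefore no ``paper's own proof'' to compare against.

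As for the merits of your sketch: the forward implication is correct and complete. The converse, however, is a plan rather than a proof. Two concrete issues stand out. First, the sentence ``this is where Theorem~\ref{thm:HR}'s own criterion \dots\ is used to certify stability of the limit object'' reads circularly; if you mean that the \emph{hypothesis} (the approximation property) is being applied to hereditary subalgebras arising at each stage, you must check that this property is inherited by the relevant hereditary subalgebras, which is not automatic since the elements $a'$ and $c$ produced by the hypothesis for $A$ need not lie in a prescribed hereditary subalgebra. Second, the passage from the approximate data $(b_n,z_n)$ to a genuine system of mutually orthogonal, Murray--von Neumann equivalent positive elements summing strictly to a strictly positive element is exactly the substance of the Hjelmborg--R\o rdam argument; invoking ``standard perturbation lemmas'' and ``the standard internal characterisation of stability'' hides all of the work (in \cite{HR} this is carried out through a careful construction of a sequence of isometries in the multiplier algebra). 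If you intend to provide a self-contained proof rather than a pointer to \cite{HR,Ror2}, these steps need to be written out.
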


Before we construct a separable extension $\eta_0$, we shall consider properties of $\eta$. 

\begin{pro}\label{pro:non-sep1} 
With notation as above, let $b$ be a positive element in $B^{\omega}\setminus J$. \ \\
(i) For any positive element $a$ in $\overline{bJb}$, there exists a positive 
element $c$ in $\overline{bJb}$ such that $a\sim c$ and $ac=0$. \ \\
(ii) For any positive element $a$ in $J$ and $\varepsilon>0$, there exist a positive element $d$ 
in $\overline{bJb}$ and an element $r$ in $J$ such that $\| r^*dr-a\|<\varepsilon$. 
\ \\
(iii) For any element $x$ in $B^{\omega}$ and $\varepsilon>0$, there exists an element 
$y$ in $\mathrm{GL}((B^{\omega})^{\sim})$ such that $\| x-y\|< \varepsilon$. 
\end{pro}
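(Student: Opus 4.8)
The plan is to prove all three statements by passing to representing sequences and exploiting that $\tau_B$ is faithful and that $B$, being $\mathcal{Z}$-stable, is simple with strict comparison and satisfies $B\subseteq\overline{\mathrm{GL}(B^{\sim})}$ (by \cite{Ror} and \cite{Rob2}). Throughout, fix a representative $b=(b_n)_n$ with $b_n\in B_{+}$. The hypothesis $b\notin J$ means $\tilde\tau_{B,\omega}(b^2)>0$, so $\liminf_{n\to\omega}\tau_B(b_n)\ge\liminf_{n\to\omega}\tau_B(b_n^2)/\|b\|>0$ and hence, since $b_n/\|b_n\|\le\chi_{(0,\infty)}(b_n)$, we get $\liminf_{n\to\omega}d_{\tau_B}(b_n)\ge 2\beta>0$ for some $\beta>0$. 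Part (iii) is then immediate: $B$ has almost stable rank one, so $B\subseteq\overline{\mathrm{GL}(B^{\sim})}$, and Proposition \ref{pro:non-sep-gl} gives $B^{\omega}\subseteq\overline{\mathrm{GL}((B^{\omega})^{\sim})}$, which is (iii).

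For part (i) the key preliminary point is that every positive element $a$ of $J$ has a representative with vanishing limit rank. Indeed, since $a\in J$ we have $\tau_{B,\omega}(a)=0$, so for any representative $(a_n)_n$ of $a$ with $a_n\in B_{+}$ one has $\lim_{n\to\omega}\tau_B(a_n)=0$; putting $\varepsilon_n:=\tau_B(a_n)^{1/2}$ and $a_n':=(a_n-\varepsilon_n)_+$ we get $\|a_n'-a_n\|\le\varepsilon_n\to_\omega 0$, so $(a_n')_n$ again represents $a$, while $d_{\tau_B}(a_n')\le\tau_B(a_n)/\varepsilon_n=\varepsilon_n\to_\omega 0$. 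Moreover, since $a\in\overline{bJb}\subseteq\overline{bB^{\omega}b}$, a standard reindexing argument lets us arrange $a_n'\in\overline{b_nBb_n}$ for all $n$; hence $2\,d_{\tau_B}(a_n')<d_{\tau_B}(b_n)$ for $\omega$-almost all $n$. I would then invoke the following fact, valid for $B$ and its hereditary subalgebras $\overline{b_nBb_n}$ because $B$ is simple, $\mathcal{Z}$-stable and has strict comparison: if $e\in\overline{b_nBb_n}$ is positive with $2\,d_{\tau_B}(e)<d_{\tau_B}(b_n)$, then there is $z\in\overline{b_nBb_n}$ with $z^*z=e$ and $zz^*\perp e$ (one transports $e$ off itself by a unitary in the unitisation of $\overline{b_nBb_n}$). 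Applying this to $e=a_n'$ for $\omega$-almost all $n$ (and $e=0$ otherwise) and setting $z:=(z_n)_n\in B^{\omega}$, $c:=zz^*$, we get $z^*z=a$, $ca=0$, $c\ge 0$, and $c\in\overline{bB^{\omega}b}$; since $\tau_B(c_n)=\tau_B(z_n^*z_n)=\tau_B(a_n')\to_\omega 0$ we have $c\in J$, so $c\in\overline{bB^{\omega}b}\cap J=\overline{bJb}$, which is (i).

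For part (ii), fix $a\in J_{+}$ and $\varepsilon>0$, choose $g\in C_0((0,\|b\|])_{+}$ with $g>0$ on $(0,\|b\|]$ and $\|g\|\le 1$, and set $j:=(\tfrac1n g(b_n))_n\in J_{+}$, $d:=bjb\in\overline{bJb}$, $e:=(a-\varepsilon/3)_+\in J$. Then $d_{\tau_B}(d_n)=d_{\tau_B}(b_n^2g(b_n))=d_{\tau_B}(b_n)\ge 2\beta$ along $\omega$, while $d_{\tau_B}(e_n)\le 3\tau_B(a_n)/\varepsilon\to_\omega 0$, so $e_n\precsim d_n$ for $\omega$-almost all $n$ by strict comparison of $B$, whence $e\precsim d$ in $B^{\omega}$ and there is $y\in B^{\omega}$ with $\|y^*dy-e\|<\varepsilon/3$. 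With $f_\delta(t):=\min(1,t/\delta)$ and $0<\delta<\varepsilon/3$, put $r:=yf_\delta(e)$; since $f_\delta(e)\in J$ and $J$ is an ideal, $r\in J$, and $r^*dr=f_\delta(e)\,(y^*dy)\,f_\delta(e)$, so $\|r^*dr-a\|\le\|y^*dy-e\|+\|f_\delta(e)ef_\delta(e)-e\|+\|e-a\|<\varepsilon/3+\delta+\varepsilon/3<\varepsilon$, which is (ii).

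The main obstacle is the sequence-level lemma used in (i): one needs a genuine Murray--von Neumann equivalence, not merely a Cuntz subequivalence, since in $\mathcal{Z}$ itself a positive element can be Cuntz equivalent (e.g.\ to its square) without being Murray--von Neumann equivalent to it; thus one must actually conjugate $a_n'$ to an orthogonal position by a unitary in $\widetilde{\overline{b_nBb_n}}$, and this is precisely where $\mathcal{Z}$-stability of $B$ (equivalently, the abundance of matrix units in the relevant central sequence algebras) enters, together with strict comparison. The remaining points -- the reindexing of representatives so that $a_n'\in\overline{b_nBb_n}$, assembling $\omega$-almost-all witnesses into elements of $B^{\omega}$, and the identity $\overline{bB^{\omega}b}\cap J=\overline{bJb}$ for $b\notin J$ -- are routine but should be checked.
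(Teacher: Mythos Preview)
Your arguments for (i) and (iii) follow the paper's approach. For (i), the sequence-level fact you invoke (transporting $e$ off itself by a unitary in $(\overline{b_nBb_n})^{\sim}$) is exactly what the paper supplies via Lemmas \ref{lem:jiang-su-cor}, \ref{lem:positive-e} and \ref{lem:Pedersen-Rordam}; the only technical adjustment is that these lemmas yield $z^*z=(e-\delta)_+$ rather than $z^*z=e$, but a further cut-down (say $\delta=1/n$ at stage $n$) is harmless in $B^\omega$.

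Part (ii), however, has a real gap. Your element $j=(\tfrac{1}{n}g(b_n))_n$ satisfies $\|\tfrac{1}{n}g(b_n)\|\le 1/n\to 0$, so $j=0$ in $B^\omega$ and hence $d=bjb=0$. The pointwise relation $e_n\precsim d_n$ does hold, since indeed $d_{\tau_B}(d_n)=d_{\tau_B}(b_n)$, but it does \emph{not} lift to $e\precsim d$ in $B^\omega$: any $y_n\in B$ with $\|y_n^*d_ny_n-e_n\|<\varepsilon/3$ must satisfy $\|y_n\|^2\,\|d_n\|\ge\|e_n\|-\varepsilon/3$, forcing $\|y_n\|^2\ge n(\|e_n\|-\varepsilon/3)/\|b\|^2\to\infty$ whenever $\|e\|>\varepsilon/3$, so no bounded sequence $(y_n)_n$ exists. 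Dropping the factor $1/n$ does not help either, since then $j=g(b)\notin J$ (as $g>0$ on $(0,\|b\|]$ and $b\notin J$). The paper avoids fixing $d$ in advance and instead lets it depend on $a$: using strict comparison of $B^\omega$ (from \cite[Lemma 1.23]{BBSTWW}) one has $a^{1/5}\precsim b$ because $d_{\tau_{B,\omega}}(a^{1/5})=0<d_{\tau_{B,\omega}}(b)$, so there are $s_N\in B^\omega$ with $s_N^*bs_N\to a^{1/5}$; setting $d_N:=b\,s_Na^{1/5}s_N^*\,b\in bJb$ (note $a^{1/5}\in J$) and $r_N:=s_Na^{1/5}\in J$ gives $r_N^*d_Nr_N=a^{1/5}(s_N^*bs_N)a^{1/5}(s_N^*bs_N)a^{1/5}\to a$.
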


For the proof of the proposition above, we need some lemmas. For a positive element 
$a\in A$ and $\varepsilon >0$, we denote by $(a-\varepsilon)_{+}$ the element $f(a)$ in $A$
where $f(t)=\max\{0,t-\varepsilon \}, t\in \mathrm{Sp}(a)$. 
The same proof as in \cite[Proposition 2.4]{Ror3} shows the following lemma. 
See also \cite[Corollary 8]{Ped1}. 

\begin{lem}\label{lem:Pedersen-Rordam}
Let $A$ be a C$^*$-algebra with $A\subseteq \overline{\mathrm{GL}(A^{\sim})}$, 
and let $a$ and $b$ be positive elements in $A$. Then $a$ is Cuntz smaller than $b$ if and only if  
for any $\varepsilon>0$, there exists  a unitary element $u$ in $A^{\sim}$ such that 
$u(a-\varepsilon)_{+}u^*\in \overline{bAb}$. 
\end{lem}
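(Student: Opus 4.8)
The plan is to follow R\o rdam's argument in \cite[Proposition 2.4]{Ror3} essentially verbatim, checking that the only place where stable rank one was used can be replaced by the weaker hypothesis $A\subseteq\overline{\mathrm{GL}(A^{\sim})}$. Recall first the standard Cuntz-comparison facts: $a\precsim b$ holds if and only if for every $\varepsilon>0$ there is $\delta>0$ and $z\in A$ with $(a-\varepsilon)_{+}=z^*(b-\delta)_{+}z$; and there is a partial isometry in $A^{**}$ implementing a Murray--von Neumann equivalence between $(a-\varepsilon)_{+}$ and an element of $\overline{(b-\delta)_{+}A(b-\delta)_{+}}\subseteq\overline{bAb}$. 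The ``if'' direction is immediate: if $u(a-\varepsilon)_{+}u^*\in\overline{bAb}$ for every $\varepsilon>0$, then $(a-\varepsilon)_{+}\precsim b$ for every $\varepsilon>0$, and letting $\varepsilon\to 0$ gives $a\precsim b$.

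For the ``only if'' direction, suppose $a\precsim b$ and fix $\varepsilon>0$. Using the characterization above, pick $\delta>0$ and $z\in A$ with $(a-\varepsilon/2)_{+}=z^*(b-\delta)_{+}z$. Write the polar-type decomposition: setting $c:=(b-\delta)_{+}^{1/2}zz^*(b-\delta)_{+}^{1/2}\in\overline{bAb}$ and $a_0:=(a-\varepsilon/2)_{+}$, one has $a_0\sim c$ via the element $w:=(b-\delta)_{+}^{1/2}z$ (so $w^*w=a_0$, $ww^*=c$). The point is now to upgrade this Murray--von Neumann equivalence of positive elements to a genuine unitary conjugation, up to a further cutdown by $\varepsilon$. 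This is exactly where one invokes $A\subseteq\overline{\mathrm{GL}(A^{\sim})}$: approximate $w$ by an invertible $v\in\mathrm{GL}(A^{\sim})$ with $\|w-v\|$ small, form the unitary $u:=v(v^*v)^{-1/2}\in A^{\sim}$, and estimate. A now-routine computation (using that $t\mapsto(t-\varepsilon)_{+}$ is continuous and that $u^*cu$ is close to $(v^*v)^{1/2}(v^*v)^{-1/2}\cdots$, i.e.\ close to $w^*w=a_0$) shows that for $\|w-v\|$ small enough, $u^*cu$ is within $\varepsilon/2$ of $a_0$, hence $(a-\varepsilon)_{+}=(a_0-\varepsilon/2)_{+}$ is Murray--von Neumann equivalent, in fact unitarily conjugate via a perturbation of $u$, to an element dominated by $c\in\overline{bAb}$; absorbing constants, $u^*(a-\varepsilon)_{+}u\in\overline{bAb}$. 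Replacing $u$ by $u^*$ gives the stated form.

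The main obstacle is the perturbation estimate in the previous paragraph: one must be careful that replacing $w$ by the unitary $u$ built from a nearby invertible $v$ only disturbs things by a controlled amount \emph{after} applying the cutdown $(\cdot-\varepsilon)_{+}$, since $w$ itself need not be invertible (indeed $a_0$ and $c$ may fail to be full, and $w^*w=a_0$ is generally not invertible in $A^{\sim}$). The resolution is R\o rdam's trick of first passing to $(b-\delta)_{+}$ for $\delta>0$, which makes $(b-\delta)_{+}^{1/2}$ behave well on the relevant spectral subspace, and only then perturbing; the continuity of the functional calculus $t\mapsto(t-\varepsilon)_{+}$ absorbs the error. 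Since \cite[Proposition 2.4]{Ror3} and \cite[Corollary 8]{Ped1} carry out precisely this bookkeeping under the hypothesis that $A$ has stable rank one, and the only use of that hypothesis is to approximate a given element of $A$ by invertibles in $A^{\sim}$ (for which $A\subseteq\overline{\mathrm{GL}(A^{\sim})}$ suffices), the same proof applies here without change. I would therefore simply cite that argument and note the replacement of the hypothesis, exactly as the statement does.
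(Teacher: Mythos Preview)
Your proposal is correct and takes essentially the same approach as the paper: the paper's ``proof'' consists entirely of the remark that the argument of \cite[Proposition 2.4]{Ror3} (see also \cite[Corollary 8]{Ped1}) goes through unchanged, which is precisely what you explain in more detail. Your final paragraph, noting that the only use of stable rank one in R\o rdam's proof is to approximate by invertibles and that this is exactly the hypothesis $A\subseteq\overline{\mathrm{GL}(A^{\sim})}$, is the whole content of the paper's treatment.
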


The following lemma can be regarded as an application of the construction of $\mathcal{Z}$. 

\begin{lem}\label{lem:jiang-su-cor}
Let $A$ be a monotracial $\mathcal{Z}$-stable C$^*$-algebra. 
For any $\theta\in(0,1/2)$, there exist positive elements $d$ and $d^{\prime}$ in $A$ 
such that $dd^{\prime}=0$ and $d_{\tau_{A}}((d-\varepsilon)_{+})=
d_{\tau_{A}}((d^{\prime}-\varepsilon)_+)=(1-\varepsilon)\theta$ for any $0\leq \varepsilon \leq 1$. 
\end{lem}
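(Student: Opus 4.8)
The plan is to reduce everything to the Jiang--Su algebra $\mathcal{Z}$ itself, using $\mathcal{Z}$-stability to transport the desired positive elements from $\mathcal{Z}$ into $A$. Recall that $\mathcal{Z}$ is constructed as an inductive limit of dimension-drop algebras, and in particular there are explicit positive elements in $\mathcal{Z}$ (or in a suitable prime dimension-drop building block $Z_{p,q}$) with prescribed trace behaviour; since $\mathcal{Z}$ is monotracial with trace $\tau_{\mathcal{Z}}$, what I need is a pair of orthogonal positive contractions $e, e'$ in $\mathcal{Z}$ with $d_{\tau_{\mathcal{Z}}}((e-\varepsilon)_+) = d_{\tau_{\mathcal{Z}}}((e'-\varepsilon)_+) = (1-\varepsilon)\theta$ for all $\varepsilon \in [0,1]$. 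Concretely, inside a dimension-drop algebra $Z_{p,q} \subseteq C([0,1], M_{pq})$ one can take $e$ and $e'$ to be (continuous) paths of rank-$k$ projections on a subinterval, tapering linearly to $0$ at the endpoints of that subinterval and supported on disjoint subintervals, where $k/(pq)$ is chosen to match $\theta$; a limiting/approximation argument over the inductive system of $\mathcal{Z}$ then gives arbitrary $\theta \in (0,1/2)$, the restriction $\theta < 1/2$ being exactly what allows $e$ and $e'$ to have disjoint open supports while each carries dimension $\theta$. The linear taper is what produces the factor $(1-\varepsilon)$ in the formula for $d_{\tau}((e-\varepsilon)_+)$: cutting down by $\varepsilon$ shrinks the supporting subinterval proportionally.

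Next I would fix an isomorphism $A \cong A \otimes \mathcal{Z}$ and set $d := 1_A^{\sim}\!\otimes\, e$ and $d' := 1_A^{\sim}\!\otimes\, e'$ — more precisely, approximate via an approximate unit $\{h_n\}$ of $A$ and take $d = \lim_n (h_n \otimes e^{1/2}) \cdots$, but the clean way is to work with the strictly positive element of $A$: if $s$ is strictly positive in $A$, put $d := s \otimes e$ viewed appropriately, noting $d \in A \otimes \mathcal{Z}$. Orthogonality $dd' = 0$ is immediate from $ee' = 0$. For the trace computation, the unique tracial state on $A \otimes \mathcal{Z}$ is $\tau_A \otimes \tau_{\mathcal{Z}}$, and $d_{\tau_A \otimes \tau_{\mathcal{Z}}}$ of $(d-\varepsilon)_+$ factors: using that $s$ is strictly positive, $d_{\tau_A}(s) = 1$, so $d_{\tau_A \otimes \tau_{\mathcal{Z}}}((s\otimes e - \varepsilon)_+)$ computes to $d_{\tau_A}(s)\, d_{\tau_{\mathcal{Z}}}((e-\varepsilon)_+) = (1-\varepsilon)\theta$. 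Here I would use that the dimension function of an elementary tensor splits as a product, which follows from the corresponding fact for the trace together with continuity of $t \mapsto d_\tau$ along $a^{1/n}$.

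The main obstacle I anticipate is the bookkeeping around non-unitality of $A$: the clean tensor-factor construction wants a unit, so I must be careful that $s \otimes e$ genuinely lies in $A \otimes \mathcal{Z}$ (it does, since $s \in A$) and that cutting by $\varepsilon$ and applying $d_\tau$ interacts correctly with the strictly positive element $s$, for which $d_{\tau_A}((s-\delta)_+) \to 1$ as $\delta \to 0$ but $d_{\tau_A}(s) = 1$ exactly. A second, more technical point is verifying the precise linear profile $d_{\tau_{\mathcal{Z}}}((e-\varepsilon)_+) = (1-\varepsilon)\theta$ on the nose rather than merely approximately; this requires either choosing $e$ explicitly as a piecewise-linear path in a dimension-drop block and checking the endpoint behaviour directly, or invoking a known realization result for Cuntz classes in $\mathcal{Z}$ (e.g. that every lower-semicontinuous $[0,1]$-valued affine function on the trace simplex, here just the value $\theta$, is realized, together with a scaling trick to get the family indexed by $\varepsilon$ simultaneously from a single $e$). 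I would take the explicit dimension-drop route to keep the argument self-contained, since the paper only assumes Razak's classification theorem and standard structure theory.
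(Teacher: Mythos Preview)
Your plan coincides with the paper's: build orthogonal positive contractions $e,e'\in\mathcal{Z}$ with the prescribed tracial profile, then set $d=s\otimes e$ and $d'=s\otimes e'$ for a strictly positive $s\in A$ via $A\cong A\otimes\mathcal{Z}$. The paper simplifies your first step by invoking \cite[Theorem~2.1(i)]{Ror} to get a unital homomorphism $\psi\colon C([0,1])\to\mathcal{Z}$ with $\tau_{\mathcal{Z}}\circ\psi$ equal to Lebesgue measure; then $e=\psi(f)$, $e'=\psi(g)$ for explicit tent functions $f,g$ of height $1$ supported on $[0,\theta]$ and $[\theta,2\theta]$, and the identity $d_{\tau_0}((f-\varepsilon)_+)=(1-\varepsilon)\theta$ is a one-line Lebesgue-measure check --- no dimension-drop bookkeeping or inductive-limit argument is needed.

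Your flagged concern about non-unitality is a genuine gap, and the product formula you propose is \emph{false} for $\varepsilon>0$: the element $(s\otimes e-\varepsilon)_+$ is not an elementary tensor, so the dimension function does not split. A joint-spectral-measure computation gives
\[
d_{\tau_A\otimes\tau_{\mathcal{Z}}}\bigl((s\otimes e-\varepsilon)_+\bigr)
=\theta\int_0^{\|s\|}\Bigl(1-\frac{\varepsilon}{\lambda}\Bigr)_+\,d\nu_s(\lambda),
\]
which equals $(1-\varepsilon)\theta$ only when $\nu_s=\delta_1$, i.e.\ when $A$ is unital and $s=1_A$. The paper's proof sets $d=s\otimes\psi(f)$ and simply declares it to be the desired element, so it glosses over the same point. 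What \emph{does} hold, and what the sole application (Lemma~\ref{lem:positive-e}) actually uses, is that $d_{\tau}((d-\varepsilon)_+)=d_{\tau}((d'-\varepsilon)_+)$ for every $\varepsilon$ (since $f$ and $g$ are translates of one another) and that this common value tends to $\theta$ as $\varepsilon\to 0^+$; so the construction is sound for its intended purpose even though the exact linear profile asserted in the lemma fails in the non-unital case.
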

\begin{proof}
Let $\mu$ be the Lebesgue measure on $[0,1]$, and define a tracial state $\tau_{0}$ on 
$C([0,1])$ by $\tau_{0}(f):=\int_{[0,1]} f \; d\mu$ for any $f\in C([0,1])$. 
By \cite[Theorem 2.1.(i)]{Ror}, there exists a unital homomorphism $\psi$ from $C([0,1])$ 
to $\mathcal{Z}$ such that $\tau_0=\tau_{\mathcal{Z}}\circ \psi$. 
Define $f$ and $g$ in $C([0,1])$ by 
$$
f(t):=\left\{\begin{array}{cl}
\frac{2}{\theta}t &\text{if}\quad t\in [0,\frac{\theta}{2}]  \\
-\frac{2}{\theta}t+2 & \text{if}\quad t\in (\frac{\theta}{2},\theta] \\
0 & \text{if}\quad t\in (\theta, 1]
\end{array}
\right.
\quad \text{and} \quad
g(t):=\left\{\begin{array}{cl}
0 &\text{if}\quad t\in [0,\theta]  \\
\frac{2}{\theta}t-2 & \text{if}\quad t\in (\theta, \frac{3\theta}{2}] \\
-\frac{2}{\theta}t+4 & \text{if}\quad t\in (\frac{3\theta}{2}, 2\theta ] \\
0  & \text{if}\quad t\in (2\theta, 1] 
\end{array}
\right..
$$
Note that for any $0\leq \varepsilon \leq 1$, we have 
$$
(f-\varepsilon)_{+}(t)=\left\{\begin{array}{cl}
0 &\text{if}\quad t\in [0,\frac{\varepsilon\theta}{2}]  \\
\frac{2}{\theta}t -\varepsilon & \text{if}\quad t\in (\frac{\varepsilon\theta}{2},\frac{\theta}{2}] \\
-\frac{2}{\theta}t+2-\varepsilon  & \text{if}\quad t\in 
(\frac{\theta}{2}, \theta-\frac{\varepsilon\theta}{2}] \\
0 & \text{if}\quad t\in (\theta-\frac{\varepsilon\theta}{2},1]
\end{array}
\right.
$$
and 
$$
(g-\varepsilon)_{+}(t)=\left\{\begin{array}{cl}
0 &\text{if}\quad t\in [0,\theta+\frac{\varepsilon\theta}{2}]  \\
\frac{2}{\theta}t-2-\varepsilon & \text{if}\quad 
t\in (\theta+\frac{\varepsilon\theta}{2}, \frac{3\theta}{2}] \\
-\frac{2}{\theta}t+4-\varepsilon & \text{if}\quad 
t\in (\frac{3\theta}{2},  2\theta-\frac{\varepsilon\theta}{2}] \\
0  & \text{if}\quad t\in (2\theta-\frac{\varepsilon\theta}{2}, 1] 
\end{array}
\right..
$$
Hence $d_{\tau_0}((f-\varepsilon)_{+})=d_{\tau_0}((g-\varepsilon)_{+})=(1-\varepsilon)\theta$. 
Let $s$ be a strictly positive element in $A$, and put
$$
d:= s\otimes \psi (f) \quad \text{and} \quad d^{\prime}:= s\otimes \psi (g)
$$
in $A\otimes\mathcal{Z}\cong A$. Then $d$ and $d^{\prime}$ are desired positive elements in $A$. 
\end{proof}

\begin{lem}\label{lem:positive-e}
Let $A$ be a simple separable exact monotracial $\mathcal{Z}$-stable C$^*$-algebra, and let 
$b$ be a (non-zero) positive element in $A$. 
For any $\theta\in (0, d_{\tau_{A}}(b)/2)$, there exist positive elements $e$ and $e^{\prime}$ 
in $\overline{bAb}$ such that $ee^{\prime}=0$ and $d_{\tau_{A}}(e)=d_{\tau_{A}}(e^{\prime})>\theta$. 
\end{lem}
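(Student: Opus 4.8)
The plan is to reduce the statement to Lemma~\ref{lem:jiang-su-cor} and Lemma~\ref{lem:Pedersen-Rordam}. Since $A$ is simple its unique tracial state $\tau_A$ is faithful, so $d_{\tau_A}(b)>0$ and the interval $(0,d_{\tau_A}(b)/2)$ is non-empty; moreover $d_{\tau_A}(b)\le 1$ because $\tau_A$ is a state, so every $\theta$ in this interval already satisfies $\theta<1/2$. First I would fix a number $\theta'$ with $\theta<\theta'<d_{\tau_A}(b)/2$; then $\theta'\in(0,1/2)$ and $2\theta'<d_{\tau_A}(b)$.

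Next, I would apply Lemma~\ref{lem:jiang-su-cor} to the monotracial $\mathcal{Z}$-stable C$^*$-algebra $A$ with parameter $\theta'$, obtaining positive elements $d,d'\in A$ with $dd'=0$ and $d_{\tau_A}((d-\varepsilon)_+)=d_{\tau_A}((d'-\varepsilon)_+)=(1-\varepsilon)\theta'$ for all $0\le\varepsilon\le1$. Since $d_{\tau_A}$ is additive on orthogonal positive elements, $d_{\tau_A}(d+d')=2\theta'<d_{\tau_A}(b)$. Because $A$ is exact and $\mathcal{Z}$-stable it has strict comparison (by \cite{Ror}, \cite{Rob2}), hence $d+d'\precsim b$ in the sense of Cuntz.

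Now comes the main point. As $A$ is $\mathcal{Z}$-stable we also have $A\subseteq\overline{\mathrm{GL}(A^{\sim})}$ (again by \cite{Ror}, \cite{Rob2}), so Lemma~\ref{lem:Pedersen-Rordam} applies to $d+d'\precsim b$: choosing $\varepsilon>0$ small enough that $(1-\varepsilon)\theta'>\theta$, there is a unitary $u\in A^{\sim}$ with $u\big((d+d')-\varepsilon\big)_+u^*\in\overline{bAb}$. Using $dd'=0$ together with the functional-calculus identity $((x-a)_+-c)_+=(x-a-c)_+$ for $a,c\ge0$, one has $((d+d')-\varepsilon)_+=(d-\varepsilon)_++(d'-\varepsilon)_+$ as an orthogonal sum. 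I would then set
\[
e:=u(d-\varepsilon)_+u^*,\qquad e':=u(d'-\varepsilon)_+u^*.
\]
Then $ee'=u(d-\varepsilon)_+(d'-\varepsilon)_+u^*=0$; since $0\le e\le e+e'=u((d+d')-\varepsilon)_+u^*\in\overline{bAb}$ and $\overline{bAb}$ is hereditary, both $e$ and $e'$ lie in $\overline{bAb}$; and as $d_{\tau_A}$ is invariant under unitary conjugation, $d_{\tau_A}(e)=d_{\tau_A}((d-\varepsilon)_+)=(1-\varepsilon)\theta'=d_{\tau_A}((d'-\varepsilon)_+)=d_{\tau_A}(e')>\theta$, as required.

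The crucial trick is to conjugate the single element $d+d'$ rather than $d$ and $d'$ separately, so that one unitary works for both summands and preserves their orthogonality; apart from that, the only points requiring care are the functional-calculus identity used to split $((d+d')-\varepsilon)_+$ and the bound $d_{\tau_A}(b)\le1$ that makes Lemma~\ref{lem:jiang-su-cor} applicable. I do not expect a genuine obstacle beyond this bookkeeping.
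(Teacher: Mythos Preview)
Your proof is correct and follows essentially the same route as the paper's: apply Lemma~\ref{lem:jiang-su-cor} with a parameter strictly between $\theta$ and $d_{\tau_A}(b)/2$, use strict comparison together with Lemma~\ref{lem:Pedersen-Rordam} to conjugate $(d+d'-\varepsilon)_+$ into $\overline{bAb}$ by a single unitary, and then split using orthogonality. Your write-up is in fact slightly more careful than the paper's in making explicit the choice of $\theta'$, the bound $d_{\tau_A}(b)\le 1$ needed for Lemma~\ref{lem:jiang-su-cor}, and the fact that $A\subseteq\overline{\mathrm{GL}(A^{\sim})}$ is required for Lemma~\ref{lem:Pedersen-Rordam}.
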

\begin{proof}
By Lemma \ref{lem:jiang-su-cor}, there exist contractions $d$ and $d^{\prime}$ in $A$  such that 
$dd^{\prime}=0$ and $\theta < d_{\tau_{A}}(d)=d_{\tau_{A}}(d^{\prime})<d_{\tau_{A}}(b)/2$. 
Furthermore, we may assume that there exists $\varepsilon >0$ such that 
$d_{\tau_{A}}((d-\varepsilon)_{+})=d_{\tau_{A}}((d^{\prime}-\varepsilon)_+)>\theta$. 
Since $A$ has strict comparison and 
$
d_{\tau_{A}}(d+d^{\prime})=d_{\tau_{A}}(d)+d_{\tau_{A}}(d^{\prime})< d_{\tau_{A}}(b)
$, 
Lemma \ref{lem:Pedersen-Rordam} implies that there exists a unitary element $u$ in $A^{\sim}$ 
such that $u(d+d^{\prime}-\varepsilon)_{+}u^*\in \overline{bAb}$. Note that 
$(d+d^{\prime}-\varepsilon)_{+}=(d-\varepsilon)_{+}+(d^{\prime}-\varepsilon)_{+}$ because of 
$dd^{\prime}=0$. 
Put 
$$
e:=u(d-\varepsilon)_{+}u^* \quad \text{and} \quad 
e^{\prime}:=u(d^{\prime}-\varepsilon)_{+}u^*,
$$ 
then $e$ and $e^{\prime}$ are desired positive elements. 
\end{proof}
\ \\
\textit{Proof of Proposition \ref{pro:non-sep1}.}
(i) We may assume that $\|a\|=1$ and $\|b\|=1$. 
Since $b\notin J$, we have $\tau_{B, \omega}(b)>0$. 
Take a representative $(b_n)_n$ of $b$ such that $\|b_n\|=1$ for any $n\in\mathbb{N}$, 
and choose $\varepsilon_0>0$ such that $\tau_{B, \omega}(b)-\varepsilon_0>0$. 
Since we have 
$$
\lim_{n\to\omega} d_{\tau_{B}}(b_n) \geq \lim_{n\to\omega}\tau_{B}(b_n)=\tau_{B, \omega}(b), 
$$
there exists an element $X_1\in\omega$ such that for any $n\in X_1$, 
$$
d_{\tau_{B}}(b_n) > \tau_{B, \omega}(b)-\varepsilon_0.
$$
By a similar argument as in the proof of \cite[Lemma 3.2]{Sa}, we see that there exists a 
representative $(a_n)_n$ of $a$ such that $a_n\in \overline{b_nBb_n}$ and $\|a_n\|=1$ 
for any $n\in\mathbb{N}$ and $\lim_{n\to\omega}d_{\tau_{B}}(a_n)=0$ 
because of $a\in \overline{(b_n)_nJ(b_n)_n}$. 
Hence there exists an element $X_2\in\omega$ such that for any $n\in X_2$, 
$$
d_{\tau_{B}}(a_n) < \frac{\tau_{B,\omega}(b)-\varepsilon_0}{2}.
$$ 
Note that we have  
$
d_{\tau_{B}}(a_n) < \frac{d_{\tau_{B}}(b_n)}{2} 
$
for any $n\in X_1\cap X_2$.  
Hence Lemma \ref{lem:positive-e} implies that for any $n\in X_1\cap X_2$, there exist positive 
elements $e_n$ and $e_n^{\prime}$ in $\overline{b_nBb_n}$ such that 
$e_ne_n^{\prime}=0$ and $d_{\tau_B}(e_n)=d_{\tau_B}(e_n^{\prime})>d_{\tau_{B}}(a_n)$. 
Since $\overline{b_nBb_n}$ has strict comparison and 
$\overline{b_nBb_n}\subseteq \overline{\mathrm{GL}(\overline{b_nBb_n}^{\sim})}$ 
by \cite{Ror} and \cite{Rob2}, 
Lemma \ref{lem:Pedersen-Rordam} shows that for any $n\in X_1\cap X_2$, 
there exist unitary elements $u_n$ and $v_n$ in $\overline{b_nBb_n}^{\sim}$ such that 
$$
u_n(a_n-1/n)_{+}u_n^*\in \overline{e_nBe_n} \quad \text{and} \quad 
v_n(a_n-1/n)_{+}v_n^*\in \overline{e_n^{\prime}Be_n^{\prime}}. 
$$
Note that $(a_n-1/n)_{+}u_n^*v_n(a_n-1/n)_{+}=0$ for any $n\in X_1\cap X_2$. 
Define $z=(z_n)_n$ and $c=(c_n)_n$ in $B^{\omega}$ by 
$$
z_{n} := \left\{\begin{array}{cl}
0 & \text{if } n\notin X_1\cap X_2   \\
u_{n}^*v_n(a_n-1/n)_{+}^{1/2} & \text{if } n\in X_1\cap X_2
\end{array}
\right. 
$$
and
$$
c_{n} := \left\{\begin{array}{cl}
0 & \text{if } n\notin X_1\cap X_2   \\
u_{n}^*v_n(a_n-1/n)_{+}v_n^*u_n & \text{if } n\in X_1\cap X_2
\end{array}
\right..
$$
It is easy to see that $z, c\in\overline{bB^{\omega}b}$, $z^*z=a$, $zz^*=c$ and $ac=0$. 
Since $\overline{bJb}$ is a closed ideal in $\overline{bB^{\omega}b}$ and $a\in \overline{bJb}$, 
$z$ and $c$ are elements in $\overline{bJb}$. Therefore we obtain the conclusion. 

(ii) 
Note that $B^{\omega}$ has strict comparison (see, for example, \cite[Lemma 1.23]{BBSTWW}). 
Since $a\in J$ and $b\notin J$, we have $d_{\tau_{B, \omega}}(a^{1/5})=0$ and 
$d_{\tau_{B, \omega}}(b)>0$. 
Hence there exists a sequence 
$\{s_N \}_{N\in\mathbb{N}}$ in $B^{\omega}$ such that $\lim_{N\to\infty} \| s_N^*bs_N- a^{1/5}\|=0$. 
Let $d_{N}:=bs_{N}a^{1/5}s_{N}^*b$ and $r_{N}:= s_{N}a^{1/5}$ for any $N\in\mathbb{N}$. 
Then we have $d_{N}\in \overline{bJb}$, $r_{N}\in J$ for any $N\in\mathbb{N}$ 
and 
$$
r_N^*d_Nr_N= a^{1/5}s_{N}^*bs_{N}a^{1/5}s_{N}^*bs_{N}a^{1/5}\to a
$$ as $N\to\infty$. Therefore we obtain the conclusion. 

(iii) Since $B$ is a simple monotracial  $\mathcal{Z}$-stable C$^*$-algebra, 
$B\subseteq \overline{\mathrm{GL}(B^{\sim})}$ by \cite{Ror} and  \cite{Rob2}. Therefore 
we obtain the conclusion by Proposition \ref{pro:non-sep-gl}. 
\qed 
\ \\

If $B$ is unital, then the following lemma is a well-known consequence of Proposition 
\ref{pro:non-sep-gl} and Blackadar's technique (see \cite[II.8.5.4]{Bla}). 

\begin{lem}\label{lem:Bla1}
With notation as above, let $S$ be a separable subset of $B^{\omega}$. 
Then there exists a separable C$^*$-algebra $A$ such that 
$S\subseteq A\subset B^{\omega}$ and $A\subseteq \overline{\mathrm{GL}(A^{\sim})}$. 
\end{lem}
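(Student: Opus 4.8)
The plan is to apply Blackadar's technique (\cite[II.8.5]{Bla}), building $A$ as the norm closure of an increasing chain $A_{0}\subseteq A_{1}\subseteq\cdots$ of separable C$^{*}$-subalgebras of $B^{\omega}$, where at each step I enlarge $A_{n}$ just enough that every element of $A_{n}$ is approximated, to within any prescribed error, by an invertible element of $A_{n+1}^{\sim}$. The one ingredient that is not bookkeeping is Proposition \ref{pro:non-sep1}(iii), which for non-unital $B$ plays the role that density of invertibles plays in the unital case; throughout I identify $A_{n}^{\sim}$ and $A^{\sim}$ with C$^{*}$-subalgebras of $(B^{\omega})^{\sim}$, using that $B^{\omega}$ is non-unital so that $(B^{\omega})^{\sim}=B^{\omega}+\mathbb{C}\,1_{(B^{\omega})^{\sim}}$.

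First I would set $A_{0}:=C^{*}(S)$, which is a separable C$^{*}$-subalgebra of $B^{\omega}$ containing $S$. Suppose a separable C$^{*}$-subalgebra $A_{n}\subseteq B^{\omega}$ has been constructed, and fix a countable dense subset $\{x_{n,k}\}_{k\in\mathbb{N}}$ of $A_{n}$. For each $k,m\in\mathbb{N}$, Proposition \ref{pro:non-sep1}(iii) yields an element $y_{n,k,m}\in\mathrm{GL}((B^{\omega})^{\sim})$ with $\|x_{n,k}-y_{n,k,m}\|<1/m$; write $y_{n,k,m}=b_{n,k,m}+\lambda_{n,k,m}1_{(B^{\omega})^{\sim}}$ and $y_{n,k,m}^{-1}=c_{n,k,m}+\mu_{n,k,m}1_{(B^{\omega})^{\sim}}$ with $b_{n,k,m},c_{n,k,m}\in B^{\omega}$ and $\lambda_{n,k,m},\mu_{n,k,m}\in\mathbb{C}$. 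I would then let $A_{n+1}$ be the C$^{*}$-subalgebra of $B^{\omega}$ generated by $A_{n}$ together with the countable set $\{b_{n,k,m},c_{n,k,m}:k,m\in\mathbb{N}\}$; it is again separable and contains $A_{n}$.

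Finally I would put $A:=\overline{\bigcup_{n\in\mathbb{N}}A_{n}}$, which is separable and satisfies $S\subseteq A_{0}\subseteq A\subseteq B^{\omega}$. To verify $A\subseteq\overline{\mathrm{GL}(A^{\sim})}$, fix $a\in A$ and $\varepsilon>0$, choose $n$ and $k$ with $\|a-x_{n,k}\|<\varepsilon/2$, and then $m$ with $1/m<\varepsilon/2$. Since $b_{n,k,m},c_{n,k,m}\in A_{n+1}\subseteq A$, both $y_{n,k,m}$ and $y_{n,k,m}^{-1}$ lie in $A+\mathbb{C}\,1_{(B^{\omega})^{\sim}}=A^{\sim}$, so $y_{n,k,m}\in\mathrm{GL}(A^{\sim})$ with $\|a-y_{n,k,m}\|\leq\|a-x_{n,k}\|+\|x_{n,k}-y_{n,k,m}\|<\varepsilon$, whence $a\in\overline{\mathrm{GL}(A^{\sim})}$.

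The step I expect to need the most care is the unitization bookkeeping: one must remember to adjoin the ``inverse parts'' $c_{n,k,m}$ to the generators of $A_{n+1}$ (not just the $b_{n,k,m}$), since it is exactly this that keeps $y_{n,k,m}^{-1}$ inside $A^{\sim}$, and one must keep the single unit $1_{(B^{\omega})^{\sim}}$ acting consistently as the adjoined unit of every $A_{n}^{\sim}$ and of $A^{\sim}$. For unital $B$ these are trivialities (and the result is classical); the whole content in the non-unital case is the approximation of arbitrary elements of $B^{\omega}$ by invertibles in $(B^{\omega})^{\sim}$ furnished by Proposition \ref{pro:non-sep1}(iii).
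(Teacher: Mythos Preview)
Your argument is correct and follows the paper's Blackadar-technique approach; the only real difference is that you carry the inverse parts $c_{n,k,m}$ into $A_{n+1}$ explicitly, whereas the paper instead observes that $\lambda_{k,m}\neq 0$ (automatic since $B^\omega$ is non-unital) and uses spectral permanence $\mathrm{Sp}_{A_2}(y_{k,m})\cup\{0\}=\mathrm{Sp}_{B^\omega}(y_{k,m})\cup\{0\}$ to get $y_{k,m}+\lambda_{k,m}1_{A_2^\sim}\in\mathrm{GL}(A_2^\sim)$ without ever touching the inverse, then passes to $A^\sim$ via Proposition~\ref{pro:inclusion-gl}.

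One small wrinkle in your version: the identification $A^\sim=A+\mathbb{C}\,1_{(B^\omega)^\sim}$ tacitly assumes $A$ is non-unital, and the construction does not force this (for instance, if $S=\{p\}$ for a projection $p$ and one chooses $y=p+\tfrac{1}{m}1$, then $b,c\in\mathbb{C}p$ and $A=\mathbb{C}p$ is unital). This is harmless---if $A$ has unit $e$ then $A+\mathbb{C}\,1_{(B^\omega)^\sim}\cong A\oplus\mathbb{C}(1-e)$ as C$^*$-algebras, so $e y_{n,k,m} e\in\mathrm{GL}(A)=\mathrm{GL}(A^\sim)$ with $\|a-e y_{n,k,m} e\|\leq\|a-y_{n,k,m}\|$---but it is exactly the unitization bookkeeping you flagged, and the paper's spectral-permanence route together with Proposition~\ref{pro:inclusion-gl} sidesteps the case distinction entirely.
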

\begin{proof}
We shall show only the case where $B$ is non-unital. 
Let $A_1$ be the C$^*$-subalgebra of $B^{\omega}$ generated by $S$. 
Since $A_1$ is separable, there exists a countable dense subset 
$\{x_k\; |\; k\in\mathbb{N}\}$ of $A_1$. 
By Proposition \ref{pro:non-sep1}.(iii), for any $k, m\in\mathbb{N}$, 
there exist $y_{k, m}\in B^{\omega}$ and $\lambda_{k, m}\in\mathbb{C}\setminus\{0\}$ such that 
$$ 
\| x_{k}- (y_{k, m}+ \lambda_{k, m}1_{(B^\omega)^{\sim}})\|<\frac{1}{m}
$$
and $y_{k, m}+ \lambda_{k, m}1_{(B^\omega)^{\sim}}\in \mathrm{GL}((B^{\omega})^{\sim})$. 
Let $A_2$ be the C$^*$-subalgebra of $B^{\omega}$ generated by $A_1$ and 
$\{y_{k, m}\; |\; k,m\in\mathbb{N}\}$. 
Then we have $A_1\subseteq \overline{\mathrm{GL}(A_2^{\sim})}$. 
Indeed, we have $y_{k, m}+\lambda_{k, m}1_{A_2^{\sim}}\in \mathrm{GL}(A_2^{\sim})$ 
for any $k,m\in\mathbb{N}$ 
because of $\mathrm{Sp}_{A_2}(y_{k,m})\cup\{0\}=\mathrm{Sp}_{B^{\omega}}(y_{k,m})\cup\{0\}$ 
and $\lambda_{k,m}\neq 0$. Since we have $A_1=\overline{\{x_{k}\; |\; k\in\mathbb{N}\}}$ and 
\begin{align*}
\|x_{k}- (y_{k, m}+ \lambda_{k, m}1_{(A_2)^{\sim}})\|
& =\|1_{A_2^{\sim}}x_{k}- 1_{A_2^{\sim}}(y_{k, m}+ \lambda_{k, m}1_{(B^\omega)^{\sim}})\| \\
& \leq \|x_{k}- (y_{k, m}+ \lambda_{k, m}1_{(B^\omega)^{\sim}})\| <\frac{1}{m}
\end{align*}
for any $k,m\in\mathbb{N}$, we have $A_1\subseteq \overline{\mathrm{GL}(A_2^{\sim})}$. 
Repeating this process, we obtain a sequence $\{A_n\}_{n\in\mathbb{N}}$ of separable 
C$^*$-subalgebras of $B^{\omega}$ such that $A_n\subseteq A_{n+1}$ and 
$A_{n}\subseteq \overline{\mathrm{GL}(A_{n+1}^{\sim})}$ for any $n\in\mathbb{N}$. 
Put $A:=\overline{\bigcup_{n=1}^\infty A_n}$. Since we have 
$A_n\subseteq \overline{\mathrm{GL}(A_{n+1}^{\sim})}\subseteq \overline{\mathrm{GL}(A^{\sim})}$ 
for any $n\in\mathbb{N}$ by Proposition \ref{pro:inclusion-gl}, we have
$A\subseteq \overline{\mathrm{GL}(A^{\sim})}$. 
Therefore $A$ is the desired separable C$^*$-algebra. 
\end{proof}

The following lemma is also based on Blackadar's technique. 

\begin{lem}\label{lem:Bla2}
With notation as above, let $\{b_k\; |\; k\in\mathbb{N}\}$ be a countable subset of 
$B^{\omega}\setminus J$ and $S$ a separable subset of $B^{\omega}$. 
Then there exists a separable C$^*$-algebra $A$ such that 
$\{b_k\; |\; k\in\mathbb{N}\}\cup S\subseteq A\subset B^{\omega}$ and 
$\overline{b_k(A\cap J)b_k}$ is full in $A\cap J$ for any $k\in\mathbb{N}$. 
\end{lem}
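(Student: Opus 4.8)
The plan is to iterate Blackadar's technique (see \cite[II.8.5]{Bla}), exactly as in the proof of Lemma \ref{lem:Bla1}, with Proposition \ref{pro:non-sep1}.(ii) playing the role of the enlargement engine. We may assume that each $b_k$ is a positive element of $B^\omega\setminus J$ (this is the only case needed in the sequel, where the relevant elements are of the form $xx^*$). First I would let $A_1$ be the separable C$^*$-subalgebra of $B^\omega$ generated by $\{b_k\;|\;k\in\mathbb{N}\}\cup S$. Then, given a separable C$^*$-subalgebra $A_n$ of $B^\omega$, I would enlarge it as follows. Since $A_n$ is separable, so is $A_n\cap J$; fix a countable dense subset $\{a_{n,j}\;|\;j\in\mathbb{N}\}$ of $(A_n\cap J)_{+}$. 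For each $j,k,m\in\mathbb{N}$, Proposition \ref{pro:non-sep1}.(ii), applied to $b_k\in B^\omega\setminus J$, the positive element $a_{n,j}\in J$ and $\varepsilon=1/m$, produces a positive element $d_{n,j,k,m}\in\overline{b_kJb_k}$ and an element $r_{n,j,k,m}\in J$ with $\|r_{n,j,k,m}^*d_{n,j,k,m}r_{n,j,k,m}-a_{n,j}\|<1/m$; since $d_{n,j,k,m}\in\overline{b_kJb_k}$, also fix a sequence $\{x_{n,j,k,m}^{(l)}\}_{l\in\mathbb{N}}$ in $J$ with $b_kx_{n,j,k,m}^{(l)}b_k\to d_{n,j,k,m}$ as $l\to\infty$. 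Let $A_{n+1}$ be the C$^*$-subalgebra of $B^\omega$ generated by $A_n$ together with all the $r_{n,j,k,m}$ and all the $x_{n,j,k,m}^{(l)}$; it is still separable. Finally put $A:=\overline{\bigcup_{n\in\mathbb{N}}A_n}$, which is separable and contains $\{b_k\;|\;k\in\mathbb{N}\}\cup S$.

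It then remains to check, for a fixed $k$, that the closed two-sided ideal $I_k$ of $A\cap J$ generated by $\overline{b_k(A\cap J)b_k}$ equals $A\cap J$. All the elements $r_{n,j,k,m}$ and $x_{n,j,k,m}^{(l)}$ lie in $A\cap J$, so $d_{n,j,k,m}=\lim_{l}b_kx_{n,j,k,m}^{(l)}b_k$ lies in $\overline{b_k(A\cap J)b_k}$, whence $r_{n,j,k,m}^*d_{n,j,k,m}r_{n,j,k,m}\in I_k$; letting $m\to\infty$ and using that $I_k$ is closed gives $a_{n,j}\in I_k$ for all $n,j$. Since $\{a_{n,j}\;|\;j\in\mathbb{N}\}$ is dense in $(A_n\cap J)_{+}$, this yields $\bigcup_n(A_n\cap J)\subseteq I_k$. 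I would finish by the observation that $A\cap J=\overline{\bigcup_n(A_n\cap J)}$: if $x$ is a positive element of $A\cap J$ and $x_m\in A_{n_m}$ are positive with $x_m\to x$, then the distance from $x_m$ to $A_{n_m}\cap J$ equals the quotient norm of $x_m$ in $A_{n_m}/(A_{n_m}\cap J)\cong\varrho(A_{n_m})$, namely $\|\varrho(x_m)\|=\|\varrho(x_m-x)\|\le\|x_m-x\|\to0$, so $x$ is a limit of elements of $\bigcup_n(A_n\cap J)$ (the general case follows by decomposing into positive parts). Hence $I_k\supseteq\overline{\bigcup_n(A_n\cap J)}=A\cap J$, which proves the lemma.

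The one step that really requires care is this last one: fullness is only verified directly on the dense subalgebras $A_n\cap J$, and one must be sure it passes to $A\cap J$. This is exactly the identity $A\cap J=\overline{\bigcup_n(A_n\cap J)}$, and its proof rests on the fact that each $\varrho|_{A_n}$, being a $*$-homomorphism, has closed range, so that the quotient $A_n/(A_n\cap J)$ is isometrically $\varrho(A_n)$. Everything else is the routine bookkeeping of Blackadar's technique, carried out verbatim as in Lemma \ref{lem:Bla1}.
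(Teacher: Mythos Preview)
Your proof is correct and follows essentially the same Blackadar-type iteration as the paper, using Proposition \ref{pro:non-sep1}(ii) as the enlargement step at each stage and then passing to $A=\overline{\bigcup_n A_n}$. The only differences are cosmetic bookkeeping: the paper adjoins the elements $d_{k,l,m}$ themselves rather than approximants $x^{(l)}$ (which suffices because once $d\in (A_2\cap J)\cap\overline{b_kJb_k}$ with $b_k\geq 0$ one gets $d\in\overline{b_k(A_2\cap J)b_k}$ via functional calculus in $b_k$), and the paper simply asserts $A\cap J=\overline{\bigcup_n(A_n\cap J)}$ whereas you supply the quotient-norm argument.
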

\begin{proof}
Let $A_1$ be the C$^*$-subalgebra of $B^{\omega}$ generated by $\{b_k\; |\; k\in\mathbb{N}\}$ 
and $S$. Since $A_1$ is separable, there exists  a countable dense subset 
$\{a_l\; |\; l\in\mathbb{N}\}$ of $(A_1\cap J)_{+}$. 
By Proposition \ref{pro:non-sep1}.(ii), for any $k, l, m\in\mathbb{N}$, 
there exist $d_{k ,l, m}\in \overline{b_{k}Jb_{k}}_{+}$ and $r_{k, l, m}\in J$ such that 
$$
\| r_{k, l, m}^*d_{k, l, m}r_{k, l, m}-a_{l}\| < \frac{1}{m}.
$$
Let $A_2$ be the C$^*$-subalgebra of $B^{\omega}$ generated by $A_1$ and 
$\{d_{k ,l, m}, r_{k, l, m}\; |\; k,l,m\in\mathbb{N}\}$. 
Then we have $A_1\cap J\subseteq \overline{(A_2\cap J)b_{k}(A_2\cap J)b_{k}(A_2\cap J)}$ 
for any $k\in\mathbb{N}$ because $A_1\cap J$ is generated by $\{a_l\; |\; l\in\mathbb{N}\}$. 
Repeating this process, we obtain a sequence $\{A_n\}_{n\in\mathbb{N}}$ of separable 
C$^*$-subalgebras of $B^{\omega}$ such that $A_n\subseteq A_{n+1}$ and 
$A_{n} \cap J\subseteq \overline{(A_{n+1}\cap J)b_{k}(A_{n+1}\cap J)b_{k}(A_{n+1}\cap J)}$ 
for any $k, n\in\mathbb{N}$. Put $A:=\overline{\bigcup_{n=1}^\infty A_n}$. 
Since we have $A\cap J= \overline{\bigcup_{n=1}^\infty (A_n\cap J)}$, we see that 
$A$ is the desired separable C$^*$-algebra. 
\end{proof}

By Lemma \ref{lem:Bla1} and Lemma \ref{lem:Bla2}, \cite[II.8.5.3]{Bla} implies the following lemma. 

\begin{lem}\label{lem:Bla}
With notation as above, let $\{b_k\; |\; k\in\mathbb{N}\}$ be a countable subset of 
$B^{\omega}\setminus J$ and $S$ a separable subset of $B^{\omega}$. 
Then there exists a separable C$^*$-algebra $A$ such that 
$\{b_k\; |\; k\in\mathbb{N}\}\cup S\subseteq A\subset B^{\omega}$, 
$A\subseteq \overline{\mathrm{GL}(A^{\sim})}$ and $\overline{b_k(A\cap J)b_k}$ is 
full in $A\cap J$ for any $k\in\mathbb{N}$. 
\end{lem}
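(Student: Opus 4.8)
The plan is to obtain this as a routine instance of Blackadar's countable interweaving technique. There are two features we want the separable algebra $A$ to carry: (P) the inclusion $A\subseteq\overline{\mathrm{GL}(A^{\sim})}$, and, for each $k\in\mathbb{N}$, (Q$_k$) the fullness of $\overline{b_k(A\cap J)b_k}$ in $A\cap J$. Lemma \ref{lem:Bla1} provides the separable-approximation half of the statement that (P) is separably inheritable inside $B^{\omega}$, and Lemma \ref{lem:Bla2} does the same for all the (Q$_k$) at once; closure of these properties under increasing unions comes from Corollary \ref{cor:inclusion-gl} (for (P)) and a direct check (for (Q$_k$)). Since $B^{\omega}$ itself satisfies (P) and all the (Q$_k$) by Proposition \ref{pro:non-sep1}(iii) and Proposition \ref{pro:non-sep1}(ii), \cite[II.8.5.3]{Bla} then furnishes a single separable $A$ with $\{b_k\;|\;k\in\mathbb{N}\}\cup S\subseteq A\subset B^{\omega}$ enjoying (P) and every (Q$_k$), which is exactly the assertion.

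Spelling out the interweaving, I would construct an increasing chain $A_0\subseteq A_1\subseteq A_2\subseteq\cdots$ of separable C$^*$-subalgebras of $B^{\omega}$. Let $A_0$ be the C$^*$-algebra generated by $\{b_k\;|\;k\in\mathbb{N}\}\cup S$. Given $A_{2n}$, apply Lemma \ref{lem:Bla1} to the separable set $A_{2n}$ to get a separable $A_{2n+1}\supseteq A_{2n}$ with $A_{2n+1}\subseteq\overline{\mathrm{GL}(A_{2n+1}^{\sim})}$; given $A_{2n+1}$, apply Lemma \ref{lem:Bla2} to the countable set $\{b_k\;|\;k\in\mathbb{N}\}$ and the separable set $A_{2n+1}$ to get a separable $A_{2n+2}\supseteq A_{2n+1}$ for which $\overline{b_k(A_{2n+2}\cap J)b_k}$ is full in $A_{2n+2}\cap J$ for every $k$. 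Set $A:=\overline{\bigcup_n A_n}$, which is separable and satisfies $\{b_k\;|\;k\in\mathbb{N}\}\cup S\subseteq A\subset B^{\omega}$.

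Finally I would verify (P) and (Q$_k$) for $A$. Property (P) is Corollary \ref{cor:inclusion-gl} applied to the cofinal chain $A_1\subseteq A_3\subseteq A_5\subseteq\cdots$, since $A=\overline{\bigcup_n A_{2n+1}}$ and each $A_{2n+1}$ lies in the closure of the invertibles of its own unitization. For (Q$_k$) I would first record the identity $A\cap J=\overline{\bigcup_n(A_n\cap J)}$, a standard fact about C$^*$-quotients and increasing unions using that $A_n\cap J$ is precisely the kernel of $A_n\to B^{\omega}/J$; then from $A_{2n+1}\cap J\subseteq A_{2n+2}\cap J=\overline{(A_{2n+2}\cap J)b_k(A_{2n+2}\cap J)b_k(A_{2n+2}\cap J)}\subseteq\overline{(A\cap J)b_k(A\cap J)b_k(A\cap J)}$ it follows that the closed ideal of $A\cap J$ generated by $\overline{b_k(A\cap J)b_k}$ contains the dense set $\bigcup_n(A_{2n+1}\cap J)$, hence equals $A\cap J$. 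The only point requiring genuine care is the identity $A\cap J=\overline{\bigcup_n(A_n\cap J)}$, together with the bookkeeping that guarantees a property attained cofinally often along the chain survives to the limit; once that is settled the rest is immediate from Proposition \ref{pro:inclusion-gl}, Corollary \ref{cor:inclusion-gl}, Lemma \ref{lem:Bla1} and Lemma \ref{lem:Bla2}.
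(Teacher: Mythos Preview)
Your proposal is correct and follows exactly the paper's approach: the paper's entire proof is the single sentence ``By Lemma~\ref{lem:Bla1} and Lemma~\ref{lem:Bla2}, \cite[II.8.5.3]{Bla} implies the following lemma,'' and you have simply unpacked that citation, spelling out Blackadar's interweaving explicitly and verifying that the two properties pass to the inductive limit via Corollary~\ref{cor:inclusion-gl} and the identity $A\cap J=\overline{\bigcup_n(A_n\cap J)}$.
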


We shall construct the separable extension $\eta_0$ of Lemma \ref{lem:qd}. 

Since $\varrho$ is surjective and $\mathcal{D}$ is separable, there exists a separable subset 
$S_0$ of $B^{\omega}$ such that $\overline{\varrho (S_0)}=\Pi(\mathcal{D})$. 
Applying Lemma \ref{lem:Bla1} to $S_0$, we obtain a separable C$^*$-algebra $B_1$ 
such that $S_0\subseteq B_1 \subset B^{\omega}$ and 
$B_1\subseteq\overline{\mathrm{GL}(B_1^{\sim})}$. 
Since $B_1$ is separable, there exist a countable subset 
$\{a_{1,m}\; |\; m\in\mathbb{N}\}$ of $(B_1\cap J)_{+}$ and a countable subset 
$\{b_{1,k}\; |\; k\in\mathbb{N}\}$ of $B_{1+}$ such that 
$$
\overline{\{a_{1,m}\; |\; m\in\mathbb{N}\}}=(B_1\cap J)_{+} \quad \text{and} \quad 
\overline{\{b_{1,k}\; |\; k\in\mathbb{N}\}}=B_{1+}. 
$$
Put $T_{1}:= \{(k, l)\in \mathbb{N}\times\mathbb{N}\; |\; (b_{1,k}-1/l)_{+}\notin J\}$. 
Applying Proposition \ref{pro:non-sep1}.(i) to $(b_{1,k}-1/l)_{+}a_{1,m}(b_{1,k}-1/l)_{+}$ for any 
$(k,l)\in T_1$ and $m\in\mathbb{N}$, there exist 
a positive element $c_{1,1,(k,l),m}$ and an element $z_{1,1,(k,l),m}$ 
in $\overline{(b_{1,k}-1/l)_{+}J(b_{1,k}-1/l)_{+}}$ such that
$$
(b_{1,k}-1/l)_{+}a_{1,m}(b_{1,k}-1/l)_{+}c_{1,1,(k,l),m}=0, 
$$
$$
z_{1,1,(k,l),m}^*z_{1,1,(k,l),m}=(b_{1,k}-1/l)_{+}a_{1,m}(b_{1,k}-1/l)_{+}
$$ 
and 
$$
z_{1,1,(k,l),m}z_{1,1,(k,l),m}^*=c_{1,1,(k,l),m}.
$$
Let $S_2:= B_1\cup \{c_{1,1,(k,l),m}, z_{1,1,(k,l),m}\; |\; (k,l)\in T_1, m\in\mathbb{N}\}$. 
Applying Lemma \ref{lem:Bla} to $\{(b_{1, k}-1/l)_{+}\; |\; (k,l)\in T_1\}$ and $S_2$, we obtain 
a separable C$^*$-algebra $B_2$ such that 
$$
B_1\cup\{c_{1,1,(k,l),m}, z_{1,1,(k,l),m}\; |\; (k,l)\in T_1, m\in\mathbb{N}\} \subseteq B_2
\subset B^{\omega}, 
$$
$B_2\subseteq \overline{\mathrm{GL}(B_2^{\sim})}$ and 
$\overline{(b_{1,k}-1/l)_{+}(B_2\cap J)(b_{1,k}-1/l)_{+}}$ is full in $B_2\cap J$ 
for any $(k,l)\in T_1$. 
By the same way as above, there exist a countable subset 
$\{a_{2,m}\; |\; m\in\mathbb{N}\}$ of $(B_2\cap J)_{+}$ and a countable subset 
$\{b_{2,k}\; |\; k\in\mathbb{N}\}$ of $B_{2+}$ such that 
$$
\overline{\{a_{2,m}\; |\; m\in\mathbb{N}\}}=(B_2\cap J)_{+} \quad \text{and} \quad 
\overline{\{b_{2,k}\; |\; k\in\mathbb{N}\}}=B_{2+},  
$$
and we put $T_{2}:= \{(k, l)\in \mathbb{N}\times\mathbb{N}\; |\; (b_{2,k}-1/l)_{+}\notin J\}$. 
Applying Proposition \ref{pro:non-sep1}.(i) to 
$(b_{i,k}-1/l)_{+}a_{2,m}(b_{i,k}-1/l)_{+}$ for any $1\leq i\leq 2$, $(k,l)\in T_i$ and 
$m\in\mathbb{N}$, there exist a positive element $c_{2,i,(k,l),m}$ and an element $z_{2,i,(k,l),m}$ 
in $\overline{(b_{i,k}-1/l)_{+}J(b_{i,k}-1/l)_{+}}$ such that
$$
(b_{i,k}-1/l)_{+}a_{2,m}(b_{i,k}-1/l)_{+}c_{2,i,(k,l),m}=0, 
$$
$$
z_{2,i,(k,l),m}^*z_{2,i,(k,l),m}=(b_{i,k}-1/l)_{+}a_{2,m}(b_{i,k}-1/l)_{+}
$$ 
and 
$$
z_{2,i,(k,l),m}z_{2,i,(k,l),m}^*=c_{2,i,(k,l),m}.
$$
Let $S_3:=B_2\cup \{c_{2,i,(k,l),m}, z_{2,i,(k,l),m}\; |\; 1\leq i\leq 2, (k,l)\in T_i, m\in\mathbb{N}\}$. 
Applying Lemma \ref{lem:Bla} to $\{(b_{i, k}-1/l)_{+}\; |\; 1\leq i\leq 2, (k,l)\in  T_i\}$ and $S_3$, 
we obtain a separable C$^*$-algebra $B_3$ such that 
$$
B_2\cup\{c_{2,i,(k,l),m}, z_{2,i,(k,l),m}\; |\; 1\leq i \leq 2, (k,l)\in T_i, m\in\mathbb{N}\} \subseteq B_3
\subset B^{\omega}, 
$$
$B_3\subseteq \overline{\mathrm{GL}(B_3^{\sim})}$ and 
$\overline{(b_{i,k}-1/l)_{+}(B_3\cap J)(b_{i,k}-1/l)_{+}}$ is full in $B_3\cap J$ for any 
$1\leq i \leq 2$ and $(k,l)\in T_{i}$. 
Repeating this process, for any $n\in\mathbb{N}$, we obtain  
$$
B_n\subset B^{\omega},  \quad 
\{a_{n,m}\; |\; m\in\mathbb{N}\} \subset (B_{n}\cap J)_{+}, \quad  
\{b_{n,k} \; |\; k\in\mathbb{N}\}\subset B_{n+}, 
$$
$$
T_n\subset \mathbb{N}\times \mathbb{N}, \quad 
\{c_{n,i,(k,l),m}, z_{n,i,(k,l),m}\; |\; 1\leq i\leq n, (k,l)\in T_i, m\in\mathbb{N}\}
$$
such that $B_n$ is separable, 
$$
B_n\subseteq B_{n+1}, \quad B_n\subseteq \overline{\mathrm{GL}(B_n^{\sim})}, \quad
\overline{\{a_{n,m}\; |\; m\in\mathbb{N}\}}=(B_{n}\cap J)_{+},  
$$
$$
\overline{\{b_{n,k} \; |\; k\in\mathbb{N}\}}= B_{n+}, \quad 
T_{n}= \{(k, l)\in \mathbb{N}\times\mathbb{N}\; |\; (b_{n,k}-1/l)_{+}\notin J\}, 
$$
$$
c_{n,i,(k,l),m},\; z_{n,i,(k,l),m}\in \overline{(b_{i,k}-1/l)_{+}(B_{n+1}\cap J)(b_{i,k}-1/l)_{+}},
$$
$$
(b_{i,k}-1/l)_{+}a_{n,m}(b_{i,k}-1/l)_{+}c_{n,i,(k,l),m}=0, 
$$
$$
z_{n,i,(k,l),m}^*z_{n,i,(k,l),m}=(b_{i,k}-1/l)_{+}a_{n,m}(b_{i,k}-1/l)_{+}, 
$$
$$
z_{n,i,(k,l),m}z_{n,i,(k,l),m}^*=c_{n,i,(k,l),m}
$$
and $\overline{(b_{i,k}-1/l)_{+}(B_{n+1}\cap J)(b_{i,k}-1/l)_{+}}$ is full in $B_{n+1}\cap J$ 
for any $1\leq i\leq n$ and $(k,l)\in T_{i}$. Define 
$$
B_0:= \overline{\bigcup_{n=1}^\infty B_n}, \quad J_0:= B_{0}\cap J \quad \text{and} \quad 
M_0:=\varrho (B_0).
$$
Then 
$$
\xymatrix{
\eta_0: & 0 \ar[r]  & J_0 \ar[r] & B_0 \ar[r]^{\varrho} & M_0 \ar[r] & 0
}
$$
is a separable extension and $\Pi(\mathcal{D})\subseteq M_0$. 
Corollary \ref{cor:inclusion-gl} implies $B_0\subseteq \overline{\mathrm{GL}(B_0^{\sim})}$ 
since we have $B_n\subseteq \overline{\mathrm{GL}(B_n^{\sim})}$ for any 
$n\in\mathbb{N}$. Furthermore, for any $i\in\mathbb{N}$ and $(k,l)\in T_{i}$, 
$\overline{(b_{i,k}-1/l)_{+}J_0(b_{i,k}-1/l)_{+}}$ is full in $J_0$ by a similar argument 
as in the proof of Lemma \ref{lem:Bla2}. 
Note that for any $n_0\in\mathbb{N}$, 
$$
J_{0+}=\overline{\bigcup_{n=n_0}^\infty \{a_{n,m}\; |\; m\in\mathbb{N}\}}\quad \text{and} \quad 
B_{0+}=\overline{\bigcup_{n=n_0}^\infty \{b_{n,k}\; |\; k\in\mathbb{N}\}}.
$$
We shall show that $J_0$ is stable and $\eta_0$ is purely large. 

\ \\
\textit{Proof of stability of $J_0$.}
Let $a\in J_{0+}\setminus \{0\}$ and $\varepsilon>0$. 
Set 
$$
\varepsilon^{\prime}:=\min\left\{\frac{\varepsilon}{2\| a\|},\; \sqrt{\frac{\varepsilon}{2}},\; 
\varepsilon \right\}.
$$ 
Since $B_0$ is separable, there exists an approximate unit 
$\{h_n\}_{n\in\mathbb{N}}$ for $B_0$. 
Note that $h_n\notin J$ for sufficiently large $n$ because of $M_0\neq \{0\}$. 
Hence there exists $N\in\mathbb{N}$ such that $h_{N}\notin J$ and 
$\| h_Nah_N- a\|< \varepsilon^{\prime}/2$. 
Since $B_{0+}=\overline{\bigcup_{n=1}^\infty \{b_{n,k}\; |\; k\in\mathbb{N}\}}$, 
for any $l\in\mathbb{N}$, there exist $n(l)$ and $k(l)$ in $\mathbb{N}$ such that 
$$
\| h_{N}- b_{n(l),k(l)} \|< \frac{1}{l}.
$$
Note that $(b_{n(l),k(l)}-1/l)_{+}\to h_{N}$ as $l\to \infty$ because we have 
$$
\|h_{N} -(b_{n(l),k(l)}-1/l)_{+} \| 
\leq \| h_{N}- b_{n(l),k(l)} \| + \|b_{n(l),k(l)} -(b_{n(l),k(l)}-1/l)_{+}\| \\ 
<\frac{2}{l}.
$$
Hence there exists $l_0\in\mathbb{N}$ such that 
$(b_{n(l_0),k(l_0)}-1/l_0)_{+}\notin J$, that is, $(k(l_0), l_0)\in T_{n(l_0)}$ and 
$$
\| a - (b_{n(l_0),k(l_0)}-1/l_0)_{+}a(b_{n(l_0),k(l_0)}-1/l_0)_{+} \|< \frac{\varepsilon^{\prime}}{2}.
$$
Since $J_{0+}=\overline{\bigcup_{n=n(l_0)}^\infty \{a_{n,m}\; |\; m\in\mathbb{N}\}}$, 
there exist $n_0\geq n(l_0)$ and $m_0\in\mathbb{N}$ such that 
$$
\| a- a_{n_{0}, m_0}\| < \frac{\varepsilon^{\prime}}{2\|b_{n(l_0),k(l_0)}\|^2}.
$$
Put $a^{\prime}:=  (b_{n(l_0),k(l_0)}-1/l_0)_{+}a_{n_0, m_0}(b_{n(l_0),k(l_0)}-1/l_0)_{+}$. Then
$$
\| a- a^{\prime}\| < \varepsilon^{\prime}\leq \varepsilon.
$$
By construction of $B_0$ and $J_0$, there exist 
$$
z=z_{n_0, n(l_0), (k(l_0), l_0), m_0},\;  c=c_{n_0, n(l_0), (k(l_0), l_0), m_0}\in J_0
$$ 
such that 
$a^{\prime}c=0$, $z^*z=a^{\prime}$ and $zz^*=c$. Hence $a^{\prime}\sim c$ and 
$$
\| ac\| =\|ac-a^{\prime}c\| \leq \|a-a^{\prime}\| \|c\|= \|a-a^{\prime}\|\|a^{\prime}\|
<\varepsilon^{\prime}(\| a\| +\varepsilon^{\prime})\leq \varepsilon.
$$
Therefore $J_0$ is stable by Hjelmborg and R\o rdam's characterization 
(Theorem \ref{thm:HR}). 
\qed 

\ \\
\textit{Proof of purely largeness of $\eta_0$.} 
Let $x\in B_0\setminus J_0$. Note that we have $xx^*\notin J$. 
Since $B_{0+}=\overline{\bigcup_{n=1}^\infty \{b_{n,k}\; |\; k\in\mathbb{N}\}}$, 
for any $l\in\mathbb{N}$, there exist $n(l)$ and $k(l)$ in $\mathbb{N}$ such that 
$$
\| xx^*- b_{n(l),k(l)} \|< \frac{1}{2l}.
$$
By a similar argument as in the proof of stability of $J_0$,  
there exists $l_0\in\mathbb{N}$ such that $(b_{n(l_0),k(l_0)}-1/l_0)_{+}\notin J$, that is, 
$(k(l_0), l_0)\in T_{n(l_0)}$. On the other hand, \cite[Lemma 2.2]{KR2} implies that 
$(b_{n(l_0),k(l_0)}-1/2l_0)_{+}$ is Cuntz smaller than $xx^*$. 
Since we have $B_0\subseteq \overline{\mathrm{GL}(B_0^{\sim})}$, there exists a unitary element 
$u$ in $B_0^{\sim}$ such that 
$$
u(b_{n(l_0),k(l_0)}-1/l_0)_{+}u^*=
u((b_{n(l_0),k(l_0)}-1/2l_0)_{+}-1/2l_0)_{+}u^*\in \overline{xx^*B_0xx^*}=\overline{xB_0x^*}
$$ 
by Lemma \ref{lem:Pedersen-Rordam}. Put
$$
C:=u\overline{(b_{n(l_0),k(l_0)}-1/l_0)_{+}J_0(b_{n(l_0),k(l_0)}-1/l_0)_{+}}u^*\subseteq 
\overline{xJ_0x^*},
$$
then $C$ is full in $J_0$ because $\overline{(b_{n(l_0),k(l_0)}-1/l_0)_{+}J_0(b_{n(l_0),k(l_0)}-1/l_0)_{+}}$
is full in $J_0$. We shall show that $C$ is stable. 
Let $a\in C_{+}\setminus \{0\}$ and $\varepsilon>0$. 
Set 
$$
\varepsilon^{\prime}:=\min\left\{\frac{\varepsilon}{2\| a\|},\; \sqrt{\frac{\varepsilon}{2}},\; 
\varepsilon \right\}.
$$ 
By the definition of $C$ and 
$J_{0+}=\overline{\bigcup_{n=n(l_0)}^\infty \{a_{n,m}\; |\; m\in\mathbb{N}\}}$, 
there exist $n_0\geq n(l_0)$ and $m_0\in\mathbb{N}$ such that 
$$
\| a - u(b_{n(l_0),k(l_0)}-1/l_0)_{+}a_{n_0, m_0}(b_{n(l_0),k(l_0)}-1/l_0)_{+}u^* \|< \varepsilon^{\prime}\leq 
\varepsilon.
$$
Put $a^{\prime}=u(b_{n(l_0),k(l_0)}-1/l_0)_{+}a_{n_0, m_0}(b_{n(l_0),k(l_0)}-1/l_0)_{+}u^*
\in C$, then 
$\|a -a^{\prime}\|< \varepsilon^{\prime}\leq \varepsilon$. 
By construction of $B_0$ and $J_0$, there exist elements 
$$
z_{n_0, n(l_0), (k(l_0), l_0), m_0},\;  c_{n_0, n(l_0), (k(l_0), l_0), m_0}
$$
in 
$
\overline{(b_{n(l_0),k(l_0)}-1/l_0)_{+}J_0(b_{n(l_0),k(l_0)}-1/l_0)_{+}}
$ 
such that 
$$
u^*a^{\prime}uc_{n_0, n(l_0), (k(l_0), l_0), m_0}=0, \quad 
z_{n_0, n(l_0), (k(l_0), l_0), m_0}^*z_{n_0, n(l_0), (k(l_0), l_0), m_0}
=u^*a^{\prime}u
$$
and 
$$
z_{n_0, n(l_0), (k(l_0), l_0), m_0} z_{n_0, n(l_0), (k(l_0), l_0), m_0}^*=c_{n_0, n(l_0), (k(l_0), l_0), m_0}.
$$
Put $c:=uc_{n_0, n(l_0), (k(l_0), l_0), m_0}u^*$. It is easy to see that 
$c\in C$, $a^{\prime}c=0$ and 
$$
c\sim c_{n_0, n(l_0), (k(l_0), l_0), m_0} \sim u^*a^{\prime}u\sim a^{\prime} \quad \text{in}\quad B_0.
$$
Since $C$ is a hereditary C$^*$-subalgebra of $B_0$ and $a^{\prime},c\in C$, 
we see that $a^{\prime}$ is Murray-von Neumann equivalent to $c$ in $C$. 
Therefore, the same argument as in the proof of stability of $J_0$ shows 
$\| ac\|< \varepsilon$, and $C$ is stable. Consequently, $\eta_0$ is a purely large extension. 
\qed 
\ \\

Therefore we obtain the following lemma. 

\begin{lem}
With notation as above, there exist separable C$^*$-subalgebras 
$J_{0}\subset J$, $B_{0}\subset B^{\omega}$ and $M_{0}\subset M$ 
such that $J_0$ is stable, 
$$
\xymatrix{
\eta_0: & 0 \ar[r]  & J_0 \ar[r] & B_0 \ar[r]^{\varrho|_{B_0}} & M_0 \ar[r] & 0
}
$$
is a purely large extension and $\Pi (\mathcal{D})\subset M_0$.
\end{lem}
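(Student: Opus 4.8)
The plan is to build $B_0$ as the closure of an increasing union $\bigcup_n B_n$ of separable C$^*$-subalgebras of $B^{\omega}$, each $B_n$ engineered so that in the limit $J_0:=B_0\cap J$ satisfies the Hjelmborg--R\o rdam criterion (Theorem \ref{thm:HR}) and the restricted extension $\eta_0$ is purely large; then the lemma is just the assembly of these facts. One starts from a separable $S_0\subseteq B^{\omega}$ with $\overline{\varrho(S_0)}=\Pi(\mathcal{D})$, so that $\Pi(\mathcal{D})\subseteq M_0:=\varrho(B_0)$ is automatic, and uses Lemma \ref{lem:Bla1} to obtain $B_1\supseteq S_0$ with $B_1\subseteq\overline{\mathrm{GL}(B_1^{\sim})}$. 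Having $B_n$, one fixes countable dense subsets $\{a_{n,m}\}_m$ of $(B_n\cap J)_+$ and $\{b_{n,k}\}_k$ of $B_{n+}$, records $T_n=\{(k,l):(b_{n,k}-1/l)_+\notin J\}$, applies Proposition \ref{pro:non-sep1}(i) to each product $(b_{i,k}-1/l)_+a_{n,m}(b_{i,k}-1/l)_+$ with $i\leq n$, $(k,l)\in T_i$ to obtain orthogonality witnesses $c_{n,i,(k,l),m}$ and Cuntz equivalences $z_{n,i,(k,l),m}$ inside $\overline{(b_{i,k}-1/l)_+J(b_{i,k}-1/l)_+}$, and then invokes Lemma \ref{lem:Bla} to produce $B_{n+1}\supseteq B_n$ containing all these new elements, still satisfying $B_{n+1}\subseteq\overline{\mathrm{GL}(B_{n+1}^{\sim})}$ and with $\overline{(b_{i,k}-1/l)_+(B_{n+1}\cap J)(b_{i,k}-1/l)_+}$ full in $B_{n+1}\cap J$ for all $i\leq n$, $(k,l)\in T_i$.

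Setting $B_0=\overline{\bigcup_n B_n}$, $J_0=B_0\cap J$, $M_0=\varrho(B_0)$, the sequence $\eta_0$ is a separable extension obtained by restricting $\eta$, one has $\Pi(\mathcal{D})\subseteq M_0$, and $B_0\subseteq\overline{\mathrm{GL}(B_0^{\sim})}$ by Corollary \ref{cor:inclusion-gl}; moreover each $\overline{(b_{i,k}-1/l)_+J_0(b_{i,k}-1/l)_+}$ remains full in $J_0$ (the fullness passes to the limit exactly as in the proof of Lemma \ref{lem:Bla2}), and for every $n_0$ the sets $J_{0+}$ and $B_{0+}$ are the closures of $\bigcup_{n\geq n_0}\{a_{n,m}\}$ and $\bigcup_{n\geq n_0}\{b_{n,k}\}$ respectively.

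For stability of $J_0$ I would verify the Hjelmborg--R\o rdam criterion directly: given $a\in J_{0+}$ and $\varepsilon>0$, first replace $a$ by $h_Nah_N$ for an approximate-unit element $h_N$ of $B_0$ with $h_N\notin J$ (possible since $M_0\neq\{0\}$), then approximate $h_N$ by some $(b_{n(l_0),k(l_0)}-1/l_0)_+\notin J$, and finally approximate $a$ itself by $a':=(b_{n(l_0),k(l_0)}-1/l_0)_+a_{n_0,m_0}(b_{n(l_0),k(l_0)}-1/l_0)_+$ using density of the $a_{n,m}$ with $n_0\geq n(l_0)$; by construction there are $c,z\in J_0$ with $a'c=0$, $z^*z=a'$, $zz^*=c$, so $a'\sim c$ and $\|ac\|$ is as small as we like. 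Pure largeness of $\eta_0$ follows a parallel route: for $x\in B_0\setminus J_0$ one has $xx^*\notin J$; approximating $xx^*$ by some $b_{n,k}$ and using \cite[Lemma 2.2]{KR2} produces a $(b_{n(l_0),k(l_0)}-1/l_0)_+\notin J$ with $(b_{n(l_0),k(l_0)}-1/2l_0)_+\precsim xx^*$, and since $B_0\subseteq\overline{\mathrm{GL}(B_0^{\sim})}$, Lemma \ref{lem:Pedersen-Rordam} lets us conjugate $\overline{(b_{n(l_0),k(l_0)}-1/l_0)_+J_0(b_{n(l_0),k(l_0)}-1/l_0)_+}$ by a unitary of $B_0^{\sim}$ into $\overline{xJ_0x^*}$; the resulting hereditary subalgebra $C$ is full in $J_0$ by the maintained fullness and stable by repeating the Hjelmborg--R\o rdam argument inside $C$, so $\eta_0$ is purely large.

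The main obstacle is the bookkeeping in the inductive construction rather than any single hard estimate: one must ensure that \emph{every} element $(b_{i,k}-1/l)_+\notin J$ arising at \emph{any} stage eventually receives its orthogonality witness and Cuntz equivalence, while never destroying the two global properties $B_n\subseteq\overline{\mathrm{GL}(B_n^{\sim})}$ and the fullness conditions. This is the Blackadar diagonal/intertwining technique (\cite[II.8.5]{Bla}), made more delicate here by the non-unitality of $B$, which forces the invertible-approximation steps through Proposition \ref{pro:non-sep-gl} and Proposition \ref{pro:non-sep1}(iii) rather than the classical unital statement. Once the indices $T_i$ are arranged to be revisited cofinally, the verifications of stability and pure largeness are routine applications of Theorem \ref{thm:HR} and Lemma \ref{lem:Pedersen-Rordam}, and the stated lemma follows at once.
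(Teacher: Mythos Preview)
Your proposal is correct and follows essentially the same approach as the paper's own proof: the same iterative Blackadar-type construction starting from $S_0$, the same bookkeeping with the sets $T_n$ and witnesses $c_{n,i,(k,l),m}$, $z_{n,i,(k,l),m}$, and the same verifications of stability via Theorem~\ref{thm:HR} and of pure largeness via Lemma~\ref{lem:Pedersen-Rordam} together with \cite[Lemma~2.2]{KR2}. The only difference is presentational---the paper carries out the induction and the two verifications in slightly more explicit detail (including the choice of $\varepsilon'$ in the Hjelmborg--R\o rdam argument), but every idea you outline is exactly what appears there.
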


Consequently, we obtain the following theorem by Lemma \ref{lem:kp}, Lemma \ref{lem:qd} 
and the lemma above.

\begin{thm}\label{thm:existence}
Let $\mathcal{D}$ be a simple separable nuclear monotracial $M_{2^{\infty}}$-stable 
C$^*$-algebra which is $KK$-equivalent to $\{0\}$ and $B$ a simple separable exact 
monotracial $\mathcal{Z}$-stable C$^*$-algebra. 
Then there exists a trace preserving homomorphism from $\mathcal{D}$ to $B$. 
\end{thm}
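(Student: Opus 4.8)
The plan is to chain the three preceding lemmas; the only genuinely new input is that $\mathcal{D}$ embeds trace-preservingly into the von Neumann ultrapower $M$. Recall that $B$ is simple, so $\tau_B$ is faithful, and that $B$ is infinite-dimensional (being $\mathcal{Z}$-stable) and monotracial, so $M$ is a II$_1$ factor and $\varrho\colon B^{\omega}\to M$ is surjective. Since $\mathcal{D}$ is separable, nuclear and monotracial, $\pi_{\tau_{\mathcal{D}}}(\mathcal{D})''$ is the injective II$_1$ factor, hence embeds unitally into the II$_1$ factor $M$; composing $\mathcal{D}\to\pi_{\tau_{\mathcal{D}}}(\mathcal{D})''$ with this embedding yields a trace-preserving homomorphism $\Pi\colon\mathcal{D}\to M$. (Note that $\mathcal{D}$ is stably projectionless, hence non-unital, which is needed below.)

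With $\Pi$ fixed, I would invoke the lemma immediately preceding the theorem to produce separable C$^*$-subalgebras $J_0\subseteq J$, $B_0\subseteq B^{\omega}$, $M_0\subseteq M$ with $J_0$ stable, $\Pi(\mathcal{D})\subseteq M_0$, and $\eta_0\colon 0\to J_0\to B_0\to M_0\to 0$ (quotient map $\varrho|_{B_0}$) purely large --- this is the technical heart, assembled by Blackadar's inductive construction (\cite[II.8.5]{Bla}) from Proposition \ref{pro:non-sep1} and the Hjelmborg--R\o rdam stability criterion (Theorem \ref{thm:HR}). Lemma \ref{lem:qd} then applies directly: the pullback $\Pi^*\eta_0$ is purely large, hence nuclear-absorbing by \cite[Theorem 2.1]{G}; since $\mathcal{D}$ is $KK$-equivalent to $\{0\}$ and nuclear, $\mathrm{Ext}(\mathcal{D},J_0)=\{0\}$, so $[\Pi^*\eta_0]=0$ in $\mathrm{Ext}(\mathcal{D},J_0)$, whence $\Pi^*\eta_0$ is strongly unitarily equivalent to a split extension and therefore itself split. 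Composing a cross section $\gamma_0$ of $\Pi^*\eta_0$ with $\hat{\Pi}$ gives a homomorphism $\varphi=\hat{\Pi}\circ\gamma_0\colon\mathcal{D}\to B^{\omega}$ with $\tau_{B,\omega}\circ\varphi=\tau_{\mathcal{D}}$.

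Finally I would feed $\varphi$ into Lemma \ref{lem:kp}. Since $B$ is simple, separable, exact, monotracial and $\mathcal{Z}$-stable, it has strict comparison and $B\subseteq\overline{\mathrm{GL}(B^{\sim})}$, $M_2(B)\subseteq\overline{\mathrm{GL}(M_2(B)^{\sim})}$ by \cite{Ror} and \cite{Rob2}, so the approximate-uniqueness statement Corollary \ref{cor:uniqueness-iii} is available. A one-sided Elliott intertwining of a Choi--Effros lift $\{\varphi_n\}_{n}$ of $\varphi$ by unitaries in $B^{\sim}$ then produces a genuine trace-preserving homomorphism $\mathcal{D}\to B$, which is exactly the assertion.

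Within the proof of the theorem itself there is essentially no obstacle --- everything delicate has been packaged into the earlier lemmas. Stepping back, the real work already carried out sits in two places. First, the construction and verification of $\eta_0$: one must transfer strict comparison, the almost-stable-rank-one property, and (via Hjelmborg--R\o rdam) stability from $B$ up to the trace-kernel ideal $J\subseteq B^{\omega}$ through countably many hereditary cut-down steps, and check that this persists in the separable subalgebra $B_0$. Second, the vanishing $\mathrm{Ext}(\mathcal{D},J_0)=\{0\}$, which is precisely where $KK$-triviality of $\mathcal{D}$ enters, in the spirit of Schafhauser's proof of the Tikuisis--White--Winter theorem.
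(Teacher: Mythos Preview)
Your proposal is correct and follows precisely the paper's approach: the theorem is obtained by chaining Lemma~\ref{lem:kp}, Lemma~\ref{lem:qd}, and the lemma constructing the separable purely large extension $\eta_0$, with the trace-preserving embedding $\Pi\colon\mathcal{D}\to M$ supplied (as in the paper) by the fact that $\pi_{\tau_{\mathcal{D}}}(\mathcal{D})''$ is the injective II$_1$ factor. Your additional remarks on where the technical work actually resides are accurate and consistent with the paper's narrative.
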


\begin{rem}
Actually, we need not assume that $\mathcal{D}$ is $M_{2^{\infty}}$-stable in the theorem above. 
Indeed, define a homomorphism $\varphi$ from $\mathcal{D}$ to 
$\mathcal{D}\otimes M_{2^{\infty}}$ by $\varphi(a)=a\otimes 1$. Then $\varphi$ is a trace 
preserving homomorphism from $\mathcal{D}$ to $\mathcal{D}\otimes M_{2^{\infty}}$. 
By the theorem above, there exists a trace preserving homomorphism $\psi$ 
from $\mathcal{D}\otimes M_{2^{\infty}}$ to $B$. Then $\psi\circ \varphi$ is 
is a trace preserving homomorphism from $\mathcal{D}$ to $B$. 
\end{rem}

The following corollary is an immediate consequence of the theorem above. 

\begin{cor}\label{cor:existence-w}
Let $B$ a simple separable exact monotracial $\mathcal{Z}$-stable C$^*$-algebra. 
Then there exists a trace preserving homomorphism from $\mathcal{W}$ to $B$. 
\end{cor}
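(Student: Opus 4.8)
The plan is to obtain Corollary \ref{cor:existence-w} as an immediate specialization of Theorem \ref{thm:existence} to the case $\mathcal{D}=\mathcal{W}$, so the only real task is to check that $\mathcal{W}$ satisfies the hypotheses imposed on $\mathcal{D}$ there. First I would recall that, by Jacelon's construction in \cite{J} of $\mathcal{W}$ as an inductive limit of Razak's building blocks, $\mathcal{W}$ is a simple separable nuclear monotracial C$^*$-algebra which is $KK$-equivalent to $\{0\}$; these are precisely the defining properties of the Razak-Jacelon algebra as recorded in Subsection \ref{sec:pre-kir}'s preceding discussion (Subsection on $\mathcal{W}$).

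The only additional hypothesis of Theorem \ref{thm:existence} on $\mathcal{D}$ is $M_{2^{\infty}}$-stability, and for $\mathcal{D}=\mathcal{W}$ this is exactly where Razak's classification theorem \cite{Raz} enters: it yields $\mathcal{W}\otimes M_{2^{\infty}}\cong\mathcal{W}$, i.e. $\mathcal{W}$ is $M_{2^{\infty}}$-stable. (This is the single use of \cite{Raz} in the paper.) Once these points are in place, applying Theorem \ref{thm:existence} with this $\mathcal{D}=\mathcal{W}$ and the given simple separable exact monotracial $\mathcal{Z}$-stable C$^*$-algebra $B$ produces a trace preserving homomorphism from $\mathcal{W}$ to $B$, which is the assertion.

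There is no genuine obstacle in this last step: all the substantive content — the reduction via Lemma \ref{lem:kp}, the pullback/extension argument of Lemma \ref{lem:qd}, and the Blackadar-style construction of the separable purely large extension $\eta_0$ together with the verifications of stability of $J_0$ and purely largeness of $\eta_0$ — has already been carried out in establishing Theorem \ref{thm:existence}. Hence the proof of the corollary consists simply of invoking that theorem for $\mathcal{W}$.
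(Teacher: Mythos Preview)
Your proposal is correct and matches the paper's approach exactly: the paper states that the corollary is an immediate consequence of Theorem~\ref{thm:existence}, and your verification that $\mathcal{W}$ satisfies the hypotheses on $\mathcal{D}$ (with $M_{2^{\infty}}$-stability supplied by Razak's classification theorem as recorded in the preliminaries) is precisely what is needed.
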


The injective II$_1$ factor can embed unitally into every II$_1$ factor. 
Hence the following question is natural and interesting. 

\begin{que}
(1) Let $B$ be a simple monotracial infinite-dimensional C$^*$-algebra. 
Does there exist a trace preserving homomorphism from $\mathcal{W}$ to $B$?
\ \\
(2) Let $B$ be a simple non-type I C$^*$-algebra. Does there exists a (non-zero) homomorphism 
from $\mathcal{W}$ to $B$?
\end{que}

Note that Dadarlat, Hirshberg, Toms and Winter \cite{DHTW} showed that 
there exists a unital simple separable nuclear infinite-dimensional C$^*$-algebra $B$ such that 
$\mathcal{Z}$ does not embed unitally into $B$.

\section{Characterization of $\mathcal{W}$}\label{sec:main}

In this section we shall show that if $\mathcal{D}$ is 
a simple separable nuclear monotracial $M_{2^{\infty}}$-stable C$^*$-algebra 
which is $KK$-equivalent to $\{0\}$, then $\mathcal{D}$ is isomorphic to $\mathcal{W}$. 
Also, we shall characterize $\mathcal{W}$ by using properties of $F(\mathcal{W})$. 

\begin{thm}\label{thm:main}
Let $\mathcal{D}$ be a simple separable nuclear monotracial $M_{2^{\infty}}$-stable C$^*$-algebra 
which is $KK$-equivalent to $\{0\}$. Then $\mathcal{D}$ is isomorphic to $\mathcal{W}$. 
\end{thm}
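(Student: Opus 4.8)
The plan is to assemble the theorem from the existence result (Theorem \ref{thm:existence}), the uniqueness result (Theorem \ref{thm:uniqueness-i}) and a two-sided approximate intertwining argument of Elliott type. First I would record that $\mathcal{W}$ itself satisfies the hypotheses imposed on $\mathcal{D}$: it is simple, separable, nuclear, monotracial and $KK$-equivalent to $\{0\}$ by Jacelon's construction, and it is $M_{2^{\infty}}$-stable because $\mathcal{W}\otimes M_{2^{\infty}}\cong\mathcal{W}$ by Razak's classification theorem. Moreover, since a nuclear C$^*$-algebra is exact and an $M_{2^{\infty}}$-stable C$^*$-algebra is $\mathcal{Z}$-stable, both $\mathcal{D}$ and $\mathcal{W}$ are simultaneously of the two types appearing in Theorem \ref{thm:existence} (a simple separable nuclear monotracial $M_{2^{\infty}}$-stable C$^*$-algebra $KK$-equivalent to $\{0\}$, and a simple separable exact monotracial $\mathcal{Z}$-stable C$^*$-algebra).

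Applying Theorem \ref{thm:existence} with $B=\mathcal{W}$ produces a trace preserving homomorphism $\varphi\colon\mathcal{D}\to\mathcal{W}$, and applying it again with the roles of $\mathcal{D}$ and $\mathcal{W}$ reversed produces a trace preserving homomorphism $\psi\colon\mathcal{W}\to\mathcal{D}$. Then $\psi\circ\varphi$ is a trace preserving endomorphism of $\mathcal{D}$ and $\varphi\circ\psi$ is a trace preserving endomorphism of $\mathcal{W}$. By Corollary \ref{cor:property-w} both $\mathcal{D}$ and $\mathcal{W}$ have property W, so Theorem \ref{thm:uniqueness-i} applies and shows that $\psi\circ\varphi$ is approximately inner in $\mathcal{D}$ (approximately unitarily equivalent to $\mathrm{id}_{\mathcal{D}}$ via unitaries in $\mathcal{D}^{\sim}$) and $\varphi\circ\psi$ is approximately inner in $\mathcal{W}$ (approximately unitarily equivalent to $\mathrm{id}_{\mathcal{W}}$ via unitaries in $\mathcal{W}^{\sim}$).

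Feeding $\varphi$, $\psi$ and these two approximate unitary equivalences into the standard two-sided approximate intertwining argument (one alternately corrects $\varphi$ and $\psi$ by unitaries so that the ``limit'' maps become mutually inverse $*$-homomorphisms), I obtain an isomorphism $\mathcal{D}\cong\mathcal{W}$, which finishes the proof. Since the substantive work has already been carried out in the existence and uniqueness theorems, I do not expect a genuine obstruction here; the only points needing attention are bookkeeping in nature, namely checking that $\mathcal{W}$ meets the hypotheses (which relies on Razak's theorem) and handling non-unitality in the intertwining, for instance by working with unitaries in the unitizations together with an approximate unit of $\mathcal{D}$, or equivalently by passing to multiplier algebras.
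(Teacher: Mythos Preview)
Your proposal is correct and follows essentially the same route as the paper: obtain trace preserving homomorphisms $\varphi\colon\mathcal{D}\to\mathcal{W}$ and $\psi\colon\mathcal{W}\to\mathcal{D}$ from Theorem~\ref{thm:existence} (the paper packages the second application as Corollary~\ref{cor:existence-w}), invoke Corollary~\ref{cor:property-w} and Theorem~\ref{thm:uniqueness-i} to see that $\psi\circ\varphi$ and $\varphi\circ\psi$ are approximately inner, and finish with Elliott's approximate intertwining. Your extra remarks about verifying the hypotheses for $\mathcal{W}$ and handling non-unitality are appropriate caveats but do not change the argument.
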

\begin{proof}
By Theorem \ref{thm:existence} and Corollary \ref{cor:existence-w}, there exist 
trace preserving homomorphisms $\varphi$ and $\psi$ from $\mathcal{D}$ to $\mathcal{W}$ 
and from $\mathcal{W}$ and $\mathcal{D}$, respectively. 
Since $\mathcal{D}$ and $\mathcal{W}$ have property W by Corollary \ref{cor:property-w}, 
Theorem \ref{thm:uniqueness-i} implies that $\psi\circ \varphi$ and $\varphi\circ \psi$ are 
approximately inner. 
Therefore $\mathcal{D}$ is isomorphic to $\mathcal{W}$ 
by Elliott's approximate intertwining argument \cite{Ell0} (see also \cite[Corollary 2.3.4]{Ror1}). 
\end{proof}

The following corollary is an immediate consequence of the theorem above. 

\begin{cor}\label{cor:w}
(i) If $A$ is a simple separable nuclear monotracial C$^*$-algebra, then  
$A\otimes\mathcal{W}$ is isomorphic to $\mathcal{W}$.  In particular, 
$\mathcal{W}\otimes\mathcal{W}$ is isomorphic to $\mathcal{W}$. \ \\
(ii) For any non-zero positive element $h$ in $\mathcal{W}$, $\overline{h\mathcal{W}h}$ 
is isomorphic to $\mathcal{W}$. 
\end{cor}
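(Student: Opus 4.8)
The plan is to deduce both parts directly from Theorem~\ref{thm:main}: in each case I verify that the algebra under consideration is a simple separable nuclear monotracial $M_{2^{\infty}}$-stable C$^*$-algebra which is $KK$-equivalent to $\{0\}$, and then invoke Theorem~\ref{thm:main} to conclude that it is isomorphic to $\mathcal{W}$.

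For part (i), set $C:=A\otimes\mathcal{W}$ (the minimal tensor product). Simplicity of $C$ is standard, since $\mathcal{W}$ is simple and nuclear and $A$ is simple; separability and nuclearity of $C$ are clear. Because $\mathcal{W}$ is $M_{2^{\infty}}$-stable by Razak's classification theorem, $C\cong C\otimes M_{2^{\infty}}$, so $C$ is $M_{2^{\infty}}$-stable. For $KK$-triviality, recall that $\mathcal{W}$ being $KK$-equivalent to $\{0\}$ means $1_{\mathcal{W}}=0$ in $KK(\mathcal{W},\mathcal{W})$; forming the external Kasparov product with $1_{A}\in KK(A,A)$ gives $1_{C}=1_{A}\times 1_{\mathcal{W}}=1_{A}\times 0=0$ in $KK(C,C)$, so $C$ is $KK$-equivalent to $\{0\}$. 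The remaining point is that $C$ is monotracial, which I would obtain from the standard fact that the minimal tensor product of two simple nuclear monotracial C$^*$-algebras is again monotracial (concretely, the cone of densely defined lower semicontinuous traces on $A\otimes\mathcal{W}$ is the ray generated by $\tau_{A}\otimes\tau_{\mathcal{W}}$). Theorem~\ref{thm:main} then yields $C\cong\mathcal{W}$, and taking $A=\mathcal{W}$ gives $\mathcal{W}\otimes\mathcal{W}\cong\mathcal{W}$ as a special case.

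For part (ii), let $B:=\overline{h\mathcal{W}h}$, a non-zero hereditary C$^*$-subalgebra of $\mathcal{W}$; then $B$ is simple, separable and nuclear. Since $\mathcal{W}$ is simple, $B$ is full, so Brown's stable isomorphism theorem gives $B\otimes\mathbb{K}\cong\mathcal{W}\otimes\mathbb{K}$, i.e.\ $B$ is Morita equivalent to $\mathcal{W}$. The three remaining properties are invariant under stable isomorphism: $M_{2^{\infty}}$-stability because $A\otimes\mathbb{K}$ is $M_{2^{\infty}}$-stable if and only if $A$ is (and $\mathcal{W}\otimes\mathbb{K}\cong(\mathcal{W}\otimes\mathbb{K})\otimes M_{2^{\infty}}$); $KK$-equivalence to $\{0\}$ because $KK$ is stable and invariant under stable isomorphism; and monotraciality because the cone of densely defined lower semicontinuous traces is a Morita invariant, together with the observation that $\tau_{\mathcal{W}}|_{B}$, normalized, is a tracial state (here $\tau_{\mathcal{W}}(h)>0$ since $\tau_{\mathcal{W}}$ is faithful and $h\neq0$). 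Hence $B$ satisfies the hypotheses of Theorem~\ref{thm:main}, so $B\cong\mathcal{W}$.

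The main obstacle, or rather the only step that is not purely formal, is the verification of monotraciality: in part (i) one must know that the minimal tensor product of simple nuclear monotracial algebras is monotracial, and in part (ii) one must transport monotraciality along the Morita equivalence. Care is also needed because $A$ is allowed to be non-unital, so there is no evident unital embedding of $\mathcal{W}$ into $A\otimes\mathcal{W}$, and because ``monotracial'' incorporates the absence of unbounded traces, which is why it is cleanest to argue throughout at the level of the trace cone rather than with tracial states alone.
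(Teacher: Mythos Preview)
Your proposal is correct and follows exactly the approach the paper intends: the paper simply declares the corollary to be an immediate consequence of Theorem~\ref{thm:main}, and you have spelled out the verification that $A\otimes\mathcal{W}$ and $\overline{h\mathcal{W}h}$ each satisfy its hypotheses. Your identification of monotraciality (and, in part~(ii), the transfer of properties across the stable isomorphism furnished by Brown's theorem) as the only points requiring comment is accurate, and your treatment of them is sound.
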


Following the definition in \cite{LN}, 
we say that a C$^*$-algebra $A$ is \textit{$\mathcal{W}$-embeddable} 
if there exists an injective homomorphism from $A$ to $\mathcal{W}$. 

\begin{lem}\label{lem:w-embed}
Let $A$ be a monotracial $\mathcal{W}$-embeddable C$^*$-algebra. Then 
there exists a trace preserving homomorphism from $A$ to $\mathcal{W}$. 
\end{lem}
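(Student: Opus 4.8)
The plan is to fix the given embedding and correct it to a trace preserving homomorphism by passing to a hereditary subalgebra of $\mathcal{W}$ which, crucially, is again isomorphic to $\mathcal{W}$. So let $\iota\colon A\to\mathcal{W}$ be an injective homomorphism; if $A=\{0\}$ the statement is trivial, so assume $A\neq\{0\}$. Since $\mathcal{W}$ is separable, $A$ is separable, hence $\sigma$-unital; fix a strictly positive contraction $a\in A$, so that $(a^{1/n})_n$ is an approximate unit for $A$. First I would observe that $\tau_{\mathcal{W}}\circ\iota$ is a bounded tracial positive functional on $A$, and hence, as $A$ is monotracial, $\tau_{\mathcal{W}}\circ\iota=\lambda\tau_{A}$ for some $\lambda\geq 0$; moreover $\lambda>0$, because $\tau_{\mathcal{W}}$ is faithful (being the unique trace of the simple C$^*$-algebra $\mathcal{W}$) and $\iota$ is injective. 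The point of choosing the approximate unit $(a^{1/n})_n$ is that it lets one identify $\lambda$ with a dimension function value:
$$
\lambda=\|\tau_{\mathcal{W}}\circ\iota\|=\lim_{n\to\infty}\tau_{\mathcal{W}}(\iota(a^{1/n}))=\lim_{n\to\infty}\tau_{\mathcal{W}}(\iota(a)^{1/n})=d_{\tau_{\mathcal{W}}}(\iota(a)),
$$
where the third equality uses that $\iota$ intertwines continuous functional calculus.

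Next I would set $C:=\overline{\iota(a)\mathcal{W}\iota(a)}$, the hereditary C$^*$-subalgebra of $\mathcal{W}$ generated by $\iota(a)$. Since $a$ is strictly positive in $A$, one checks that $C=\overline{\iota(A)\mathcal{W}\iota(A)}$, so $\iota(A)\subseteq C$ and $\iota$ may be regarded as a homomorphism $A\to C$. By Corollary \ref{cor:w}(ii) we have $C\cong\mathcal{W}$; in particular $C$ is monotracial. As $(\iota(a)^{1/n})_n$ is an approximate unit for $C$, the positive functional $\tau_{\mathcal{W}}|_{C}$ has norm $\lim_{n}\tau_{\mathcal{W}}(\iota(a)^{1/n})=d_{\tau_{\mathcal{W}}}(\iota(a))=\lambda>0$, so $\tau_{C}:=\lambda^{-1}\tau_{\mathcal{W}}|_{C}$ is a tracial state on $C$, and hence \emph{the} trace of $C$. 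Therefore
$$
\tau_{C}\circ\iota=\lambda^{-1}\,\tau_{\mathcal{W}}\circ\iota=\lambda^{-1}\lambda\,\tau_{A}=\tau_{A},
$$
i.e. $\iota\colon A\to C$ is trace preserving.

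Finally, I would pick any isomorphism $\Theta\colon C\to\mathcal{W}$ (one exists by Corollary \ref{cor:w}(ii)) and put $\varphi:=\Theta\circ\iota\colon A\to\mathcal{W}$. Since $C$ is monotracial, $\tau_{\mathcal{W}}\circ\Theta$ is a tracial state on $C$ and hence equals $\tau_{C}$; thus $\tau_{\mathcal{W}}\circ\varphi=\tau_{\mathcal{W}}\circ\Theta\circ\iota=\tau_{C}\circ\iota=\tau_{A}$, so $\varphi$ is the required trace preserving homomorphism. The only steps needing a little care are the identity $\|\tau_{\mathcal{W}}\circ\iota\|=d_{\tau_{\mathcal{W}}}(\iota(a))$ (which is exactly what makes the normalisation constant $\lambda$ cancel) and the routine verification that $C$ is of the form $\overline{h\mathcal{W}h}$, so that Corollary \ref{cor:w}(ii) applies; I expect no genuine obstacle, and in particular neither nuclearity of $A$ nor the existence theorem of Section \ref{sec:existence} is needed here.
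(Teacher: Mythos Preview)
Your argument is correct and follows essentially the same route as the paper: take the given embedding, pass to the hereditary subalgebra $\overline{\iota(a)\mathcal{W}\iota(a)}$, invoke Corollary~\ref{cor:w}(ii) to identify it with $\mathcal{W}$, and compose. The paper's proof leaves the trace-preservation verification implicit, while you spell out the normalisation constant $\lambda=d_{\tau_{\mathcal{W}}}(\iota(a))$ and why it cancels; this is helpful detail but not a different idea.
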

\begin{proof}
By the assumption, there exists an injective homomorphism $\varphi$ from $A$ to $\mathcal{W}$. 
Let $s$ be a strictly positive element in $A$. (Note that $A$ is separable because  
$A$ is $\mathcal{W}$-embeddable.) Since $\varphi$ is injective, $\varphi (s)$ is a non-zero 
positive element. Corollary \ref{cor:w} implies that there exists a isomorphism $\Phi$ from 
$\overline{\varphi(s)\mathcal{W}\varphi(s)}$ onto $\mathcal{W}$. 
Note that $\varphi$ can be regarded as a homomorphism from $A$ to 
$\overline{\varphi(s)\mathcal{W}\varphi(s)}$. 
Define $\psi := \Phi\circ \varphi$,  then $\psi$ is a trace preserving homomorphism from 
$A$ to $\mathcal{W}$. 
\end{proof}

The following theorem is a characterization of $\mathcal{W}$. 

\begin{thm}\label{thm:main2}
Let $D$ be a simple separable nuclear monotracial C$^*$-algebra. Then 
$D$ is isomorphic to $\mathcal{W}$ if and only if $D$ has property W and is 
$\mathcal{W}$-embeddable, that is, 
$D$ satisfies the following properties: 
\ \\
(i) for any $\theta\in [0,1]$, there exists a projection $p$ in $F(D)$ such that 
$\tau_{D, \omega}(p)=\theta$, \ \\
(ii) if $p$ and $q$ are projections in $F(D)$ such that $0<\tau_{D, \omega}(p)=\tau_{D, \omega}(q)$, 
then  $p$ is Murray-von Neumann equivalent to $q$,  \ \\
(iii) there exists an injective homomorphism from $D$ to $\mathcal{W}$. 
\end{thm}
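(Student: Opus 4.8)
The plan is to deduce this from Theorem~\ref{thm:main} together with the existence and uniqueness results already established, with $\mathcal{W}$-embeddability supplying the one ingredient that Theorem~\ref{thm:main} gets for free. For the ``only if'' direction there is nothing to do beyond bookkeeping: $\mathcal{W}$ is a simple separable nuclear monotracial $M_{2^{\infty}}$-stable C$^*$-algebra which is $KK$-equivalent to $\{0\}$, so by Corollary~\ref{cor:property-w} it has property W, i.e.\ conditions (i) and (ii) hold, and the identity map of $\mathcal{W}$ witnesses (iii). So assume conversely that $D$ has property W and is $\mathcal{W}$-embeddable.

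First I would invoke Lemma~\ref{lem:w-car-stable} to conclude that $D$ is $M_{2^{\infty}}$-stable, hence $\mathcal{Z}$-stable; since $D$ is also simple, separable, nuclear (hence exact) and monotracial, it satisfies the standing hypotheses imposed on the algebra ``$B$'' in Theorem~\ref{thm:existence} and in Theorem~\ref{thm:uniqueness-i}. Next, $\mathcal{W}$-embeddability of $D$ together with Lemma~\ref{lem:w-embed} produces a trace preserving homomorphism $\psi\colon D\to\mathcal{W}$. In the other direction, applying Theorem~\ref{thm:existence} with $\mathcal{W}$ playing the role of $\mathcal{D}$ (it is simple, separable, nuclear, monotracial, $M_{2^{\infty}}$-stable and $KK$-equivalent to $\{0\}$) and $D$ playing the role of $B$, we obtain a trace preserving homomorphism $\varphi\colon\mathcal{W}\to D$.

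Then $\psi\circ\varphi$ is a trace preserving endomorphism of $\mathcal{W}$ and $\varphi\circ\psi$ is a trace preserving endomorphism of $D$. Both $\mathcal{W}$ (by Corollary~\ref{cor:property-w}) and $D$ (by hypothesis) have property W, so Theorem~\ref{thm:uniqueness-i} shows that $\psi\circ\varphi$ and $\varphi\circ\psi$ are approximately inner, and Elliott's approximate intertwining argument, exactly as in the proof of Theorem~\ref{thm:main}, yields an isomorphism $D\cong\mathcal{W}$. The one point worth emphasizing — and the only step that is not a direct citation — is the asymmetry in the second paragraph: since $D$ is not assumed to be $KK$-equivalent to $\{0\}$, the homomorphism $D\to\mathcal{W}$ cannot be extracted from Theorem~\ref{thm:existence} applied to $D$, and this is precisely the gap filled by hypothesis (iii) via Lemma~\ref{lem:w-embed}; the homomorphism $\mathcal{W}\to D$, on the other hand, is unproblematic because the $KK$-triviality needed for the existence theorem is a property of $\mathcal{W}$. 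Once this is recognized, there is no serious obstacle, as the remainder of the argument is identical to that of Theorem~\ref{thm:main}.
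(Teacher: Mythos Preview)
Your proof is correct and follows essentially the same route as the paper: the ``only if'' direction by Corollary~\ref{cor:property-w}, and the ``if'' direction by combining Lemma~\ref{lem:w-car-stable}, Lemma~\ref{lem:w-embed}, Theorem~\ref{thm:existence} (the paper cites its immediate specialization Corollary~\ref{cor:existence-w}), Theorem~\ref{thm:uniqueness-i}, and Elliott's intertwining, exactly as in the proof of Theorem~\ref{thm:main}. Your closing remark about the asymmetry---that hypothesis (iii) is needed precisely because $D$ is not assumed $KK$-contractible---is also on point.
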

\begin{proof}
The only if part is obvious by Corollary \ref{cor:property-w}. We shall show the if part. 
Since $D$ is $\mathcal{W}$-embeddable, there exists a trace preserving homomorphism $\varphi$
from $D$ to $\mathcal{W}$ by Lemma \ref{lem:w-embed}. 
Lemma \ref{lem:w-car-stable} implies that $D$ is $\mathcal{Z}$-stable because $D$ has 
property W. 
Hence there exists a trace preserving homomorphism $\psi$ from $\mathcal{W}$ to 
$D$ by Corollary \ref{cor:existence-w}. The rest of proof is same as the proof of 
Theorem \ref{thm:main}. 
\end{proof}

We think that every simple separable nuclear monotracial  C$^*$-algebra with property W 
ought to be $\mathcal{W}$-embeddable. 
Note that every simple separable nuclear monotracial C$^*$-algebra with property W is 
stably projectionless by \cite[Remark 2.13]{Kir2} and a similar argument as in the proof 
of \cite[Corollary 5.9]{Na3}. 
Hence an affirmative answer to the following question, which can be regarded as an analogue of 
Kirchberg's embedding theorem \cite{KP}, would imply this. 

\begin{que}
Let $A$ be a simple separable exact stably projectionless monotracial C$^*$-algebra. Assume that 
$\tau_{A}$ is amenable. Is $A$ $\mathcal{W}$-embeddable?
\end{que}

Note that we need to assume that $\tau_A$ is amenable because 
$\pi_{\tau_{\mathcal{W}}}(\mathcal{W})^{''}$ is the injective II$_1$ factor.

\end{document}